\newtheorem{thm}{Theorem}[section]
\newtheorem{rem}{Remark}
\renewcommand{}
\newcommand{\nn}{\nonumber}
\newcommand{\mS}{\mathcal{S}}
\def\epsilon{\varepsilon} 
\newcommand{\mat}[1]{\boldsymbol{#1}}
\pgfplotsset{compat=1.18}
\begin{document}
\begin{frontmatter}
\title{
Structure-preserving parametric finite element methods for anisotropic surface diffusion flow with minimal deformation formulation}
\author[1]{Yihang Guo}
\author[1]{Meng Li*}
\address[1]{School of Mathematics and Statistics, Zhengzhou University,
Zhengzhou 450001, China}
\ead{This author's research was supported by National Natural Science Foundation of China (No. 11801527,U23A2065).
Corresponding author: limeng@zzu.edu.cn. }
\begin{abstract}
High mesh quality plays a crucial role in maintaining the stability of solutions in geometric flow problems.
 Duan and Li [Duan \& Li, SIAM J. Sci. Comput. 46 (1) (2024) A587–A608] applied the minimal deformation (MD) formulation to propose an artificial tangential velocity determined by harmonic mapping to improve mesh quality.
In this work, we extend the method to anisotropic surface diffusion flows, which, similar to isotropic curvature flow, also preserves excellent mesh quality.
Furthermore, developing a numerical algorithm for the flow with MD formulation that guarantees volume conservation and energy stability remains a challenging task.
We, in this paper, successfully construct several structure-preserving algorithms, including first-order and high-order temporal discretization methods.  
Extensive numerical experiments show that our methods effectively preserve mesh quality for anisotropic SDFs, ensuring high-order temporal accuracy, volume conservation or/and energy stability.
\end{abstract}
\begin{keyword}
Surface diffusion flow, parametric finite element method, minimal deformation formulation, scalar auxiliary variable approach, Lagrange multiplier approach, energy stability, volume conservation 
\end{keyword}
\end{frontmatter}
\pagestyle{myheadings} \markboth{~}
{}
\section{Introduction}\label{sec:intro}
Surface diffusion is a common phenomenon in materials science and solid-state physics, involving the motion of surface atoms, atomic clusters, and molecules on the surface of solid materials \cite{Mullins57}. When the lattice orientations of the material surface vary, it leads to differences in diffusion rates and anisotropic surface energy density. This phenomenon is known as anisotropic surface diffusion, which plays an important role in material science, such as solid-state dewetting \cite{Naffouti17,Wang15,Zhao20,Zhao2020p}, crystal growth of nanomaterials 
 \cite{cahn91,gomer1990} and morphology development of alloys \cite{asaro1972interface}.
 
\begin{figure}[h]
    \centering
    \includegraphics[width=0.6\linewidth]{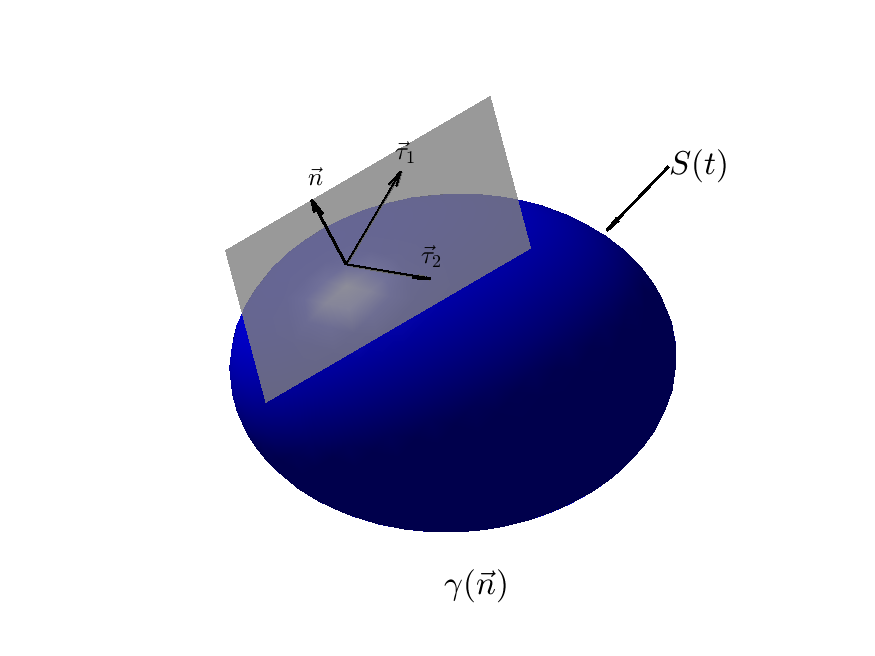}
    \caption{An illustration of surface diffusion on a closed surface $\mS(t)$ with anisotropic surface energy density $\gamma(\vec{n})$ in three dimensions, where $\vec{n}$ is the unit outward normal vector, $\vec{\tau}_1$ and $\vec{\tau}_2$ are a set of orthonormal basis for the local tangential plane.}
    \label{fig:illustration of surface diffusion}
\end{figure}

 As depicted in Figure \ref{fig:illustration of surface diffusion}, $\mS:= \mS(t)$ is a closed and orientable surface in three dimensions (3D), $\vec{n}=(n_1,n_2,n_3)^T \in \mathbb{S}^2$ represents the unit outward normal vector of $\mS(t)$ and $\gamma(\vec{n})$ denotes the anisotropic surface energy density function. The total surface energy of $\mS$ is defined as   
\begin{equation}\label{eqn:energy}
     W(\mS):= W(\mS(t)) = \int_{\mS(t)} \gamma(\vec{n})\, dA,
 \end{equation}
 where $dA$ represents the surface integral. The anisotropic surface diffusion of the closed surface is governed by the following geometric flow \cite{jiang2012phase,Mullins57,Naffouti17,thompson12solid}:
 \begin{equation}\label{eqn:govern}
     \left(\partial_t \vec{X}\circ \vec{X}^{-1}\right) \cdot \vec{n}=\Delta_{\mS} \mu, \qquad \text{on} \quad \mS(t),
 \end{equation}
 where $\vec{X}(\vec{\rho},t): \mS^0 \to \mathbb{R}^3$ is the parameterization of $\mS(t)$ with $\mS^0$ representing the initial surface of $\mS(t)$, $\Delta_{\mS}:=\nabla_{\mS} \cdot \nabla_{\mS}$ represents the surface Laplace-Beltrami operator with $\nabla_{\mS}$ being the surface gradient of $\mS(t)$, and $\mu$ is the chemical potential (or the weighted mean curvature $H_\gamma$) obtained by using the thermodynamic variation of $W(\mS)$:
 \begin{equation}\label{eqn:mu_def}
     \mu = \frac{\delta W(\mS)}{\delta \mS} = \lim_{\epsilon \to 0} \frac{W(\mS^\epsilon)-W(\mS)}{\epsilon}, 
 \end{equation}
 with $\mS^\epsilon$ being a small perturbation of $\mS$ \cite{Jiang16,Jiang19a}.
 
  Let $\gamma(\vec{p})$ be a homogeneous extension with respect to $\gamma(\vec{n})$, which can be defined by 
  \begin{equation}\label{eqn:extension_gamma}
      \gamma(\vec{p}):= 
      \begin{cases}
          |\vec{p}|\gamma\left(\frac{\vec{p}}{|\vec{p}|}\right), \qquad \forall \vec{p} = (p_1,p_2,p_3)^T \in \mathbb{R}^3_*:=\mathbb{R}^3\setminus \{0\},
 \\
          0, \quad \qquad \qquad \vec{p} = 0,
      \end{cases}
  \end{equation}
 where $|\vec{p}| = \sqrt{p_1^2+p_2^2+p_3^2}$. From \cite{cahn1974vector}, the Cahn-Hoffman $\vec{\xi}$-vector  can be formulated as 
 \begin{equation}\label{eqn:Cahn_vector}
     \vec{\xi} = (\xi_1,\xi_2,\xi_3)^T:=\vec{\xi}(\vec{n})=\nabla \gamma(\vec{p})\big|_{\vec{p} = \vec{n}}.
 \end{equation}
Obviously, it holds $\vec{\xi}\cdot \vec{n} = \gamma(\vec n)$, and the chemical potential $\mu$ can be represented as \cite{wheeler1999cahn} 
 \begin{equation}\label{eqn:mu_xi}
     \mu = \nabla_s \cdot \vec{\xi}.
 \end{equation}

There is extensive research on numerical methods for anisotropic surface diffusion flow (SDF), such as the marker-particle method \cite{Wong00,du2010tangent}, the finite difference method \cite{Deckelnick05,Bansch04}, the crystalline method \cite{Klinger11shape}, the discontinuous method and the parametric finite element method (PFEM) \cite{barrett2007parametric,Barrett08Ani,Barrett19,Li2020energy,Bao17}. 
Among these methods for simulating SDF, the BGN method proposed by Barrett, Garcke, and Nürnberg is considered the most efficient and accurate. 
Compared with other PFEMs, the BGN method applied to SDF demonstrates the ability to improve mesh quality without the need for mesh redistribution techniques.
For the evolution of one-dimensional curves, it has been demonstrated that when any three vertices are not colinear, the BGN method satisfies the equal-mesh distribution \cite{Barrett07,Zhao20,Zhao2021}, which partially explains why the tangential velocity generated by the BGN method can improve mesh quality in the evolution of surface diffusion of one-dimensional curves. 
Moreover, Hu and Li in \cite{hu22evolving} explained that the BGN method succeeds in improving mesh quality by minimizing the instantaneous rate of deformation of the surface in the continuous limiting situation. 
However, in approximating the SDF of the surface, when the time step is chosen sufficiently small in the BGN method, the nodes tend to cluster and the mesh gets worse, leading to solution instability. 

 In \cite{duan24}, Duan and Li proposed a novel artificial motion designed to minimize the deformation energy, given by
 \begin{equation}\label{eqn:de_energy}\notag
    E\left(\mS(t)\right)=\frac{1}{2}\int_{\mS^0} \left|\nabla_{\mS^0} X(\cdot,t)\right|^2 \, dA,
\end{equation}
under the constraint $(\partial_t \vec{X}\circ \vec{X}^{-1})\cdot \vec{n} = \vec{v}\cdot \vec{n}$, where $\vec{v}$ denotes the velocity. Equivalently, we can introduce a Lagrange multiplier $\kappa: \mS(t) \to \mathbb{R}$ to transform the constrained optimization problem into solving the following system: 
 \begin{equation}\label{eqn:harmonic_formula}
    \begin{cases} 
     \left(\partial_t \vec{X}\circ \vec{X}^{-1}\right) \cdot \vec{n} = \vec{v}\cdot \vec{n}, \qquad &\text{on} \quad \mS(t), \\
      \left(\kappa \vec{n}\right)\circ \vec{X} = \Delta_{\mS^0} \vec{X}, \qquad  \quad \qquad &\text{on} \quad \mS^0.
    \end{cases} 
 \end{equation}
 This system is referred to as the MD formulation, which arises a harmonic map from the initial surface $\mS^0$ to the surface $\mS(t)$.
Therefore, the map $\vec{X}(\cdot,t)$ in the MD formulation can map the triangles on $\mS^0$ to similar triangles on the evolving surface $\mS(t)$, which resolves the mesh distortion that occurs in the BGN method during isotropic SDF and mean curvature flows (MCFs) when the time step is sufficiently small. 
Inspired by the MD formulation, we apply this method to numerically solve the anisotropic SDF.

As is well known, the evolution of the closed surface \(\mS(t)\), governed by the partial differential equation \eqref{eqn:govern}, possesses two fundamental geometric properties: volume conservation and energy decay.
Establishing numerical methods that preserve these two properties is of great significance.
In \cite{Zhao2021,bao2023symmetrized1,bao2024unified}, Bao et al. proposed several structure-preserving PFEMs for isotropic/anisotropic SDF within the BGN method.
%
Although the MD formulation method can maintain good mesh quality during evolution, constructing a structure-preserving algorithm for this method is still a challenge. 
Very recently, Gao and Li in \cite{gao2025geometric} designed geometric-structure preserving methods for computing surface evolution under isotropic mean curvature flow and surface diffusion with MD formulations. The proposed method achieves first-order accuracy in time and can preserve surface area decrease and volume conservation. In this work, we extend and develop higher-order temporal methods in this paper to address more complex anisotropic SDF problems, which not only preserve energy stability but also maintain volume conservation properties.

In recent years, there are two popular methods for energy dissipative systems: the invariant energy quadratization (IEQ) method and the scalar
auxiliary variable (SAV) approach \cite{Shen18,huang2020highly,shen2019new,huang2022new}. Huang proposed a new SAV approach in \cite{huang2022new}, which can maintain unconditional energy stability for the modified energy under the backward differentiation formula of order k (BDFk) scheme, while also providing a rigorous error estimate for the BDFk ($1\le k \le5$) scheme. Additionally, the Lagrange multiplier (LM) approaches mentioned in \cite{Garcke2024structure,gao2025geometric,cheng2020new} are also effective alternatives. 
Building upon the SAV and LM approaches, we propose several types of numerical schemes based on the MD formulation, with the following key innovations: 
\begin{itemize}
    \item We develop fully discrete PFEMs for the anisotropic SDF \eqref{eqn:govern} and \eqref{eqn:mu_xi}  with tangential velocity to improve mesh quality based on the MD formulation
in \eqref{eqn:harmonic_formula}. For time integration, we adopt the BDFk methods ($1\leq k\leq 4$), abbreviated as MD-BDFk. This method significantly enhances mesh quality compared to the BGN-BDFk methods, particularly in certain specific cases. By carefully designing the normal vector in the MD-BDF1 method, we further develop a volume-conserving method (VC-MD-BDF1 method). Additionally, the MD-BDF2 method proposed in this paper demonstrates improved mesh performance in simulating the evolution of anisotropic SDF, as compared to the extrapolated linearization method in \cite{duan24}.
\item By combining the SAV approach with the MD formulation, we construct high-order time-accurate, energy-stable PFEMs for anisotropic SDFs, referred to as the SAV-MD-BDFk methods.  Compared to the SAV-BGN-BDFk methods, the SAV-MD-BDFk methods offer better mesh quality in certain cases. 
Additionally, we can easily prove the energy stability of the SAV-MD-BDFk methods, with the discrete energy approximating the original energy. 
Furthermore, we can prove the boundedness of the original energy. 
Similarly, by modifying the normal vector in the SAV-MD-BDF1 method, we further develop an approximate volume-conserving scheme, termed the VC-SAV-MD-BDF1 method.
    \item In order to construct a scheme that preserves both volume conservation and energy stability, we combine the LM approach with the MD formulation to design the structure-preserving LM-MD-BDFk methods. Different from the recent work in \cite{Garcke2024structure}, the LM-MD-BDFk methods preserve the structural properties of anisotropic SDFs while significantly improving mesh quality. Additionally, different from \cite{gao2025geometric}, we not only address anisotropic SDFs but also achieve higher-order time discretizations that can ensure volume conservation.
  We note that, in contrast to the SAV-MD-BDFk methods, the LM-MD-BDFk methods are nonlinear schemes that require iteration. While this increases the computational cost, it effectively preserves volume conservation. 
The LM-MD-BDFk methods include three types: the VC-LM-MD-BDFk methods with volume conservation ($1\leq k\leq 4$), the ES-LM-MD-BDFk methods with energy stability (\(k = 1, 2\)), and the SP-LM-MD-BDFk methods that satisfy both volume conservation ($1\leq k\leq 4$) and energy stability (\(k = 1, 2\)).

  \item All of the methods discussed above can address certain issues, but they also have limitations, as detailed in Remark 8. Building upon the advantages of both approaches, we propose the novel LM-SAV-MD-BDFk methods, which can simultaneously achieve approximate volume conservation and energy stability for any $k$, while maintaining stability when reaching equilibrium. We illustrate that when \( r \) takes a larger value, the volume error of the LM-SAV-MD-BDFk method becomes comparable to that of the volume conservation method, without introducing any additional computational cost.
\end{itemize}

The rest of the paper is organized as follows. 
In Section \ref{sec:anisotropic}, we present the parametric finite element approximations for anisotropic SDFs combined with the MD formulation and adopt the BDFk scheme for time discretization.
In Section \ref{sec:SAV}, we further integrate this method with the SAV approach to propose a class of energy-stable SAV-MD-BDFk methods.
In Section \ref{sec:LM_MD}, leveraging the LM approach, we introduce the LM-MD-BDFk methods, which can preserve both energy stability and volume conservation. 
In Section \ref{sec:LM_SAV_MD}, we design novel LM-SAV-MD-BDFk methods that effectively advance the previously proposed methods. In Section \ref{sec:numerical_tests}, we conduct extensive experiments to demonstrate the efficiency, precision, structure preservation, and excellent mesh quality of our proposed schemes. Finally, we conclude the paper in Section \ref{sec:conclusion}.

\section{The MD-BDFk methods for anisotropic SDF}\label{sec:anisotropic}

In this section, with the aid of the MD formulation method \cite{duan24}, we propose a new formula for anisotropic SDF and present its weak formulation along with the corresponding fully discrete schemes.
\subsection{The new formulation}
The velocity of a surface evolving under surface diffusion is given by  
$\vec{v} = (\Delta_\mS \mu) \vec{n}.$ Then, based on the definition of chemical potential \eqref{eqn:mu_def}, by introducing the Cahn-Hoffman vector $\vec{\xi}$ and utilizing the MD formulation \eqref{eqn:harmonic_formula}, we can get a new formulation for anisotropic SDF:
\begin{subequations}\label{eqn:newMD_SDF}
\begin{alignat}{3}
&\left(\partial_t \vec{X}\circ \vec{X}^{-1}\right) \cdot \vec{n} = \Delta_\mS \mu, \quad &&\text{on} \quad \mS(t), \\
&\mu = \nabla_\mS \cdot \vec{\xi}, \quad &&\text{on} \quad \mS(t), \\
&\left(\kappa \vec{n}\right) \circ \vec{X} = \Delta_{\mS^0} \vec{X}, \quad &&\text{on} \quad \mS^0.
\end{alignat}
\end{subequations}

For a differentiable scalar-valued function $f$ on $\mS(t)$, the surface gradient $\nabla_\mS f=\nabla_{\mS(t)} f = (\underline{D}_1f,\underline{D}_2f,\underline{D}_3f)^T$ \cite{Deckelnick05}. Similarly, the surface Jacobian and surface divergence of the differential vector-valued function $\vec{f} = (f_1,f_2,f_3)^T$ on $\mS(t)$ can be defined as \cite{bao2023symmetrized1,bao2024unified}

\begin{equation}\label{eqn:difference}
  \nabla_\mS \vec{f} = \bigg(\nabla_\mS f_1,\nabla_\mS f_2,\nabla_\mS f_3\bigg)^T, \qquad  \nabla_\mS \cdot \vec{f} = \sum_{i=1}^{3} \underline{D}_i f_i. 
\end{equation}

Motivated by \cite{bao2023symmetrized1}, we introduce a symmetric matrix $\mat{Z}_k(\vec{n})$ as
\begin{equation}\label{eqn:matrix}
\mat{Z}_k(\vec{n}) = \gamma(\vec{n})\mat{I}-\vec{n}\vec{\xi}^T-\vec{\xi}\vec{n}^T+k(\vec{n})\vec{n}\vec{n}^T,  
\end{equation}
where $\mat{I}$ is a $3\times3$ identity matrix and $k(\vec{n})$ is a stability function to guarantee the positive definition of the matrix. 
The existence of a minimal stabilizing function $k_0(\vec{n})$ has been proven in \cite{bao2023symmetrized1}. By using similar method in \cite[Theorem 2.2]{bao2024unified}, the chemical potential $\mu$ can be represented as
  \begin{equation}\label{eqn:strong_formula}
      \mu = -\nabla_\mS \cdot \left(\mat{Z}_k(\vec{n}\right)\nabla_\mS \vec{\text{id}})\vec{n}. \qquad \text{on} \quad \mS(t).
  \end{equation}
Therefore, the system \eqref{eqn:newMD_SDF} can be reformulated into the following form:
\begin{subequations}\label{eqn:new2MD_SDF}
    \begin{alignat}{3}
 \label{eqn:model_MD1}      
 &\left(\partial_t \vec{X}\circ \vec{X}^{-1}\right) \cdot \vec{n} = \Delta_\mS \mu, \quad &&\text{on} \quad \mS(t), \\
      & \mu = -\nabla_\mS \cdot \left(\mat{Z}_k(\vec{n})\nabla_\mS \vec{\text{id}}\right)\vec{n}, \quad &&\text{on} \quad \mS(t), \\
     &\left(\kappa \vec{n}\right) \circ \vec{X} = \Delta_{\mS^0} \vec{X}, \quad &&\text{on} \quad \mS^0,
    \end{alignat}
\end{subequations}
where $\vec{\text{id}}$ is the identity function on $\mS(t)$.

For $t\ge 0$, let $V(t)$ and $W(t)$ respectively represent the enclosed volume and the total energy of the evolving closed surface $\mS(t)$, which can be formulated as
\begin{equation}\label{eqn:geo_physic}
     V(t):=\frac{1}{3}\int_{\mS(t)} \vec{\text{id}}\cdot \vec{n}\, dA, \qquad
    W(t):=\int_{\mS(t)}\gamma(\vec{n})\,dA. \end{equation}
Then, applying the Reynolds transport theorem, we can obtain 
\begin{subequations}\label{eqn:geometry_property}
   \begin{align}
     \label{eqn:volume_diss}  &\frac{dV(t)}{dt}=\left(\left(\partial_t \vec{X} \circ \vec{X}^{-1} \right)\cdot \vec{n},1\right)_{\mS(t)}=0, \qquad \forall t \ge 0, \\ 
       &\label{eqn:ener_diss} \frac{dW(t)}{dt}=\left(\left(\partial_t \vec{X} \circ \vec{X}^{-1}\right)\cdot \vec{n},\mu\right)_{\mS(t)} = -\left(\nabla_\mS \mu, \nabla_\mS \mu\right)_{\mS(t)}\le 0, \qquad \forall t \ge 0.
   \end{align} 
\end{subequations}

In order to obtain the weak formulation of \eqref{eqn:new2MD_SDF}, we define the functional space with respect to the closed surface $\mS(t)$:
\begin{equation}\label{eqn:L2space}
    L^2(\mS(t)):=\left\{u:\mS(t) \to \mathbb{R} \bigg|\int_{\mS(t) }|u|^2 \,dA < \infty \right\},
\end{equation}
equipped with the $L^2$-inner product
\begin{equation*}\label{eqn:inner_product}
    \left(u,v\right)_{\mS(t)}: = \int_{\mS(t)} uv\,dA, \qquad \forall u,v \in L^2(\mS(t)).
\end{equation*}
Similarly, the definition \eqref{eqn:L2space} can be directly applied to $[L^2(\mS(t))]^3$ and $[L^2(\mS(t))]^{3\times 3}$. Specially, the $L^2$-inner product of two matrix-valued functions $\mathbf{U}$, $\mathbf{V}\in[L^2(\mS(t))]^{3\times 3}$ requires the use of the matrix Frobenius inner product $\mathbf{U}:\mathbf{V} = \text{Tr}(\mathbf{V}^T \mathbf{U})$, which is defined as follows:  
\begin{equation*} \label{eqn: matrix_inner}
    \left \langle \mathbf{U},V  \right \rangle_{\mS(t)}:=\int_{\mS(t)} \mathbf{U}:\mathbf{V} \, dA, \qquad \forall \mathbf{U},\mathbf{V}\in[L^2(\mS(t))]^{3\times 3}.
\end{equation*}
Based on \eqref{eqn:L2space}, we further define the following Sobolev spaces:
\begin{subequations}\label{eqn:Sobolev}
  \begin{align}
&H^1(\mS(t)):=\bigg\{u:\mS(t) \to \mathbb{R}\big| u\in [L^2(\mS(t))], \nabla_{\mS(t)} u \in [L^2(\mS(t))]^3\bigg\},\\ &
\left[H^1(\mS(t))\right]^3:=\bigg\{\vec{u}\to \mathbb{R}^3 \big|\vec{u}\in[L^2(\mS(t))]^3,\nabla_{\mS(t)} \vec{u} \in [L^2(\mS(t))]^{3\times 3}\bigg\}.
 \end{align}
\end{subequations}

Therefore, the weak formulation of the system \eqref{eqn:new2MD_SDF} is as follows: given the initial closed surface $\mS(0) = \vec{X}(\vec{\rho},0)\in [H^1(\mS^0)]^3$, find the evolution surface $\mS(t) = \vec{X}(\vec{\rho},t)$, the chemical potential $\mu\in H^1(\mS(t))$ and the scalar function $\kappa\in L^2(\mS(t)) $ for $t >0$, such that 
\begin{subequations}\label{eqn:weakly_formula}
 \begin{align}
    &\left(\left(\partial_t \vec{X}\circ \vec{X}^{-1}\right)\cdot \vec{n}, \varphi\right)_{\mS(t)} + \left(\nabla_\mS \mu,\nabla_\mS \varphi\right)_{\mS(t)} = 0, \qquad \forall \varphi \in H^1(\mS(t)), \\
    & \left(\mu, \phi\right)_{\mS(t)}-\left(\mat{Z}_k(\vec{n})\nabla_{\mS}\vec{\text{id}},\nabla_\mS (\vec{n}\phi)\right)_{\mS(t)} = 0, \qquad \forall \phi \in H^1(\mS(t)),\\ \label{eqn:kappa_weakly}
    & \left((\kappa\vec{n})\circ \vec{X}, \vec{\omega}\right)_{\mS^0}+\left(\nabla_{\mS^0}\vec{X},\nabla_{\mS^0}\vec{\omega}\right)_{\mS^0} = 0, \qquad \forall \vec{\omega}\in [H^1(\mS^0)]^3.
  \end{align}  
\end{subequations}

For computational convenience, we can transform the integral in \eqref{eqn:kappa_weakly} to the evolving closed surface \(\mS(t)\) using the Jacobian matrix \(\mat{J} = \nabla_{\mS(t)} \vec{X}^0 (\nabla_{\mS(t)} \vec{X}^0)^T + \vec{n}\vec{n}^T\).
Since the ratio of the surface area elements between \(\mS^0\) and \(\mS(t)\) equals to \(\sqrt{\text{det}(\mat{J})}\), then we have
\begin{equation}\label{eqn:kappa_weakly2}
     \left(\kappa\vec{n}, \vec{\omega}\sqrt{\text{det}(\mat J)}\,\right)_{\mS(t)}+\left(\nabla_{\mS(t)}\vec{\text{id}}\mat J^{-1},\nabla_{\mS(t)}\vec{\omega}\sqrt{\text{det}(\mat J)}\,\right)_{\mS(t)} = 0, \qquad \forall \vec{\omega}\in [H^1(\mS(t))]^3.
 \end{equation}
 
\subsection{The MD-BDFk methods}
Let \(\Delta t > 0\) represent the time step, and define the discrete time levels as \(t_m = m \Delta t\) for all \(m \geq 0\).
For spatial discretization, the evolving surface \(\mS(t_m)\) at any time level is approximated by the polygonal surface mesh \(\mS^m := \bigcup_{l=1}^L \overline{\sigma}_l^m\), consisting of \(L\) mutually disjoint, non-degenerate triangles \(\sigma_l^m\) and \(K\) vertices \(\vec{q}_k^m\).
The 
 element $\sigma_l^m:= \Delta\{\vec{q}_{l_1}^m,\vec{q}_{l_2}^m,\vec{q}_{l_3}^m\}, 1\le l\le L$ indicates that a triangle formed by vertices $\{\vec{q}_{l_1}^m,\vec{q}_{l_2}^m,\vec{q}_{l_3}^m\}$ in an anticlockwise sequence.
The unit outward normal vector of $\sigma_l^m$ can be obtained by 
 \begin{equation*}
     \vec{n}_l^m: = \frac{\mathscr T\{\sigma_l^m\}}{|\mathscr T\{\sigma_l^m\}|}\quad\text{with}\quad \mathscr{T}\{\sigma_l^m\} = \left(\vec{q}^m_{l_2}-\vec{q}^m_{l_1}\right)\times \left(\vec{q}^m_{l_3}-\vec{q}^m_{l_1}\right).
 \end{equation*}
 We define the following finite element space
 \begin{equation}\label{eqn:finite_space}
     \mathbb{K}(\mS^m):=\left\{u\in C(\mS^m): u\big|_{\sigma_l^m} \in \mathbb{P}^1(\sigma_l^m), \quad \forall 1\le l \le L \right\},
 \end{equation}
 where $\mathbb{P}^1(\sigma_l^m)$ represents the space of polynomials on the $l$-th element with degree at most $1$. The definition \eqref{eqn:finite_space} can be directly extended to $[\mathbb{K}(\mS^m)]^3$ and $[\mathbb{K}(\mS^m)]^{3\times 3}$. Furthermore, the mass-lumped inner product $(\cdot,\cdot)_{\mS^m}^h$ can be denoted as 

\begin{equation}\label{eqn:mass_lumped}
    (f,g)_{\mS^m}^h :=\frac{1}{3}\sum_{l=1}^L \sum_{i=1}^{3}|\sigma_l^m|f\left((\vec{q}_{l_i})^-\right)g\left((\vec{q}_{l_i})^-\right),
\end{equation}
where $|\sigma_l^m| := \frac{|\mathscr T\{\sigma_l^m\}|}{2}$ denotes the area of $\sigma_l^m$ and $f\left((\vec{q}_{l_i})^-\right)$ indicates the one-limit of $f(\vec{x})$ when $\vec{x}$ approaches vertice $\vec{q}_{l_i}$ inside $\sigma_l^m$. Similar to \eqref{eqn: matrix_inner}, the definition \eqref{eqn:mass_lumped} can also be extended to the mass-lumped inner product of the matrix-valued functions $\mathbf{U}$ and $\mathbf{V}$ as follows
\begin{equation}\label{eqn:matrix_mass_lumped}
    \left \langle \mathbf{U},\mathbf{V}  \right \rangle_{\mS^m}^h:=\frac{1}{3}\sum_{l=1}^L \sum_{i=1}^{3}|\sigma_l^m|\mathbf{U}\left((\vec{q}_{l_i})^-\right):\mathbf{V}\left((\vec{q}_{l_i})^-\right).
\end{equation}

According to \eqref{eqn:mass_lumped} and \eqref{eqn:geo_physic}, we can represent the discrete enclosed volume and surface energy with $V\left(\vec{X}^{m}\right)$ and $W\left(\vec{X}^{m}\right)$ respectively, and define them as 
\begin{subequations}\label{eqn:discrete_geo}
    \begin{align}
        &  \label{eqn:disc_vol}V\left(\vec{X}^{m}\right):=\frac{1}{3}\int_{\mS^m} \vec{\text{id}}\cdot\vec{n}^m\,dA = \frac{1}{9}\sum_{l=1}^{L}\sum_{i=1}^3 |\sigma_l^m|\left(\vec{q}^m_{l_i}\cdot \vec{n}^m_l\right), \qquad \forall m \ge 0, \\ & \label{eqn:disc_ener}
W\left(\vec{X}^{m}\right):=\int_{\mS^m}\gamma(\vec{n}^m)\,dA = \sum_{l=1}^L |\sigma_l^m|\gamma(\vec{n}^m_l),\qquad \forall m \ge 0.
    \end{align}
\end{subequations}
By the definition of the unit vector $\vec{n}_l^m$ on $\sigma_l^m$, we define the averaged normal vector $\vec{v}_k^m \in [\mathbb{K}(\mS^m)]^3$ on vertice $\vec{q}_k^m$ as
\begin{equation}\label{eqn:averaged_vector}
    \vec{v}_k^m := \frac{\sum_{\sigma_l^m \in \mat{T}_k^m} |\sigma_l^m| \vec{n}_l^m}{\bigg|\sum_{\sigma_l^m \in \mat{T}_k^m} |\sigma_l^m| \vec{n}_l^m\bigg|}\quad \text{with}\quad \mat{T}_k^m =\left\{\sigma_l^m \big| \vec{q}_k^m \in \overline{\sigma}_l^m \right\}.
\end{equation}

By applying PFEM discretization in space and the backward-Euler method in time, we obtain the fully discrete scheme (referred to as the MD-BDF1 method) of the weak formulation \eqref{eqn:weakly_formula}: given the initial closed surface $\mS^0:=\vec{X}^0(\cdot)\in [\mathbb{K}(\mS^0)]^3$, find the solution $(\vec{X}^{m+1},\vec{\chi}^{m+1},\mu^{m+1},\kappa^{m+1})\in [\mathbb{K}(\mS^0)]^3 \times [\mathbb{K}(\mS^m)]^3 \times \mathbb{K}(\mS^m) \times \mathbb{K}(\mS^m)$, such that
 \begin{subequations}\label{eqn:first_full}
     \begin{align}
         & \left(\frac{\vec{\chi}^{m+1}-\vec{\text{id}}}{\Delta t}\cdot \vec{v}^m, \varphi\right)_{\mS^m}^h + (\nabla_\mS \mu^{m+1},\nabla_\mS \varphi)_{\mS^m}^h = 0, \qquad \forall \varphi \in \mathbb{K}(\mS^m),\label{eqn:first_fulla}\\
         &(\mu^{m+1}, \phi)_{\mS^m}^h-\left(\mat{Z}_k(\vec{n}^m)\nabla_{\mS}\vec{\chi}^{m+1},\nabla_\mS (\vec{v}^m\phi)\right)_{\mS^m}^h = 0, \qquad \forall \phi \in \mathbb{K}(\mS^m),\label{eqn:first_fullb}\\
         &\left(\kappa^{m+1}, \vec{v}^m\cdot \vec{\omega}\sqrt{\text{det}(\mat J^m)}\right)_{\mS^m}^h+\left(\nabla_{\mS}\vec{\chi}^{m+1}(\mat J^m)^{-1},\nabla_{\mS}\vec{\omega}\sqrt{\text{det}(\mat J^m)}\right)_{\mS^m}^h = 0, \qquad \forall \vec{\omega}\in [\mathbb{K}(\mS^m)]^3, \label{eqn:first_fullc}\\ 
         &\vec{X}^{m+1} = \vec{\chi}^{m+1}\circ \vec{X}^m,\label{eqn:first_fulld}
     \end{align}
 \end{subequations}
 where $\vec{\chi}^{m+1}$ is a parameterization of the evolving surface $\mS^{m+1}=\vec{\chi}^{m+1}(\vec{X}^{m})$.  The discrete surface gradient of $f\in \mathbb{K}(\mS^m)$ can be calculated as
\begin{equation}\label{eqn:gradient1}
  \nabla_{\mS} f \big|_{\sigma_l^m} :=f(\vec{q}_{l_1}^m)\frac{(\vec{q}_{l_2}^m-\vec{q}_{l_3}^m)\times \vec{n}_l^m}{|(\vec{q}_{l_2}^m-\vec{q}_{l_1}^m)\times (\vec{q}_{l_3}^m-\vec{q}_{l_1}^m)|} + f(\vec{q}_{l_2}^m)\frac{(\vec{q}_{l_3}^m-\vec{q}_{l_1}^m)\times \vec{n}_l^m}{|(\vec{q}_{l_2}^m-\vec{q}_{l_1}^m)\times (\vec{q}_{l_3}^m-\vec{q}_{l_1}^m)|} +f(\vec{q}_{l_3}^m)\frac{(\vec{q}_{l_1}^m-\vec{q}_{l_3}^m)\times \vec{n}_l^m}{|(\vec{q}_{l_1}^m-\vec{q}_{l_2}^m)\times (\vec{q}_{l_3}^m-\vec{q}_{l_1}^m)|}.
\end{equation}

We note that \eqref{eqn:first_full} cannot be proved to be volume-conservative. Motivated by  \cite{bao2023symmetrized1,gao2025geometric}, we define the following time-weighted approximation:
\begin{equation}\label{eqn:normal_vectors}
     \vec{n}_l^{m+\frac{1}{2}}:=\frac{\mathscr T\{\sigma_l^m\} + 4\mathscr T\{\sigma_l^{m+\frac{1}{2}}\}+\mathscr T\{\sigma_l^{m+1}\}}{6\left|\mathscr T\{\sigma_l^m\}\right|},\quad l = 1,2,\dots,L.
 \end{equation}
Then, we can obtain a volume-conserving scheme (VC-MD-BDF1 method) for anisotropic SDF: given the initial closed surface $\mS^0:=\vec{X}^0(\cdot)\in [\mathbb{K}(\mS^0)]^3$, find the solution $(\vec{X}^{m+1},\vec{\chi}^{m+1},\mu^{m+1},\kappa^{m+1})\in [\mathbb{K}(\mS^0)]^3 \times [\mathbb{K}(\mS^m)]^3 \times \mathbb{K}(\mS^m) \times \mathbb{K}(\mS^m)$, such that
 \begin{subequations}\label{eqn:first_full_VC}
     \begin{align}
         & \left(\frac{\vec{\chi}^{m+1}-\vec{\text{id}}}{\Delta t}\cdot \vec{n}^{m+\frac{1}{2}}, \varphi\right)_{\mS^m}^h + (\nabla_\mS \mu^{m+1},\nabla_\mS \varphi)_{\mS^m}^h = 0, \qquad \forall \varphi \in \mathbb{K}(\mS^m),
         \label{eqn:first_full_VCa}
         \\
         &(\mu^{m+1}, \phi)_{\mS^m}^h-\left(\mat{Z}_k(\vec{n}^m)\nabla_{\mS}\vec{\chi}^{m+1},\nabla_\mS \left(\vec{v}^m\phi\right)\right)_{\mS^m}^h = 0, \qquad \forall \phi \in \mathbb{K}(\mS^m),\label{eqn:first_full_VCb}\\
         &\left(\kappa^{m+1}, \vec{v}^m\cdot \vec{\omega}\sqrt{\text{det}\left(\mat J^m\right)}\right)_{\mS^m}^h+\left(\nabla_{\mS}\vec{\chi}^{m+1}(\mat J^m)^{-1},\nabla_{\mS}\vec{\omega}\sqrt{\text{det}(\mat J^m)}\right)_{\mS^m}^h = 0, \qquad \forall \vec{\omega}\in [\mathbb{K}(\mS^m)]^3, \label{eqn:first_full_VCc} \\ 
         & \vec{X}^{m+1} = \vec{\chi}^{m+1} \circ \vec{X}^m.
         \label{eqn:first_full_VCd}
     \end{align}
 \end{subequations}
 
Similar to \cite{Zhao2021,gao2025geometric}, the proof of volume conservation for the VC-MD-BDF1 method relies on the identity: $V^{m+1}-V^m = \left
([\vec{\chi}^{m+1}-\vec{\text{id}}]\cdot \vec{n}^{m+\frac{1}{2}},1\right)_{\mS^m}^h$. Hence, by setting $\varphi = \Delta t$ in \eqref{eqn:first_full_VCa}, we can directly obtain $V^{m+1}=V^m$.

In what follows, by using implicit-explicit BDFk schemes for time discretization and employing the PFEM for spatial discretization, we aim to design temporal high-order algorithms for the weak formulation \eqref{eqn:weakly_formula}.
Before giving the high-order BDFk schemes, we first define the map $\vec{\chi}^{m-d} := \vec{X}^{m-d} \circ (\vec{\widetilde{X}}^{m+1})^{-1} \in [\mathbb{K}(\widetilde{\mS}^{m+1})]^3, d = 0, 1\dots,k-1$
, by introducing a suitably approximated surface $\widetilde{\mS}^{m+1}$.
Then, the MD-BDFk methods are as follows: given $\vec{\chi}^m,\vec{\chi}^{m-1},\dots,\vec{\chi}^{m-k+1}\in [\mathbb{K}(\widetilde{\mS}^{m+1})]^3$, find  $(\vec{X}^{m+1},\vec{\chi}^{m+1},\mu^{m+1},\kappa^{m+1})\in [\mathbb{K}(\mS^0)]^3 \times [\mathbb{K}(\widetilde{\mS}^{m+1})]^3 \times \mathbb{K}(\widetilde{\mS}^{m+1}) \times \mathbb{K}(\widetilde{\mS}^{m+1})$, such that
 \begin{subequations} \label{eqn:MDBDFk_scheme}
    \begin{align}
\label{eqn:MD_1}&\left(\frac{\alpha_k\vec{\chi}^{m+1}-A_k(\vec{\chi}^m)}{\Delta t}\cdot \vec{\widetilde{v}}^{m+1}, \varphi\right)_{\widetilde{\mS}^{m+1}}^h + (\nabla_\mS \mu^{m+1},\nabla_\mS \varphi)_{\widetilde{\mS}^{m+1}}^h = 0, \qquad \forall \varphi \in \mathbb{K}(\widetilde{\mS}^{m+1}),\\ \label{eqn:MD_2}
         &(\mu^{m+1}, \phi)_{\mS^{m+1}}^h-\left(\mat{Z}_k(\vec{\widetilde{n}}^{m+1})\nabla_{\mS}\vec{\chi}^{m+1},\nabla_\mS \left(\vec{\widetilde{v}}^{m+1}\phi\right)\right)_{\widetilde{\mS}^{m+1}}^h = 0, \qquad \forall \phi \in \mathbb{K}(\widetilde{\mS}^{m+1}),\\ \label{eqn:MD_3}
         &\left(\kappa^{m+1}, \vec{\widetilde{v}}^{m+1}\cdot \vec{\omega}\sqrt{\text{det}\left( \widetilde{\mat J}^{m+1}\right)}\right)_{\widetilde{\mS}^{m+1}}^h+\left(\nabla_{\mS}\vec{\chi}^{m+1}\left(\widetilde{\mat J}^{m+1}\right)^{-1},\nabla_{\mS}\vec{\omega} \sqrt{\text{det}\left( \widetilde{\mat J}^{m+1}\right)}\right)_{\widetilde{\mS}^{m+1}}^h = 0, \qquad \forall \vec{\omega}\in [\mathbb{K}(\widetilde{\mS}^{m+1})]^3,
         \\ 
         & \vec{X}^{m+1} = \vec{\chi}^{m+1} \circ \vec{\widetilde{X}}^{m+1},
    \end{align}     
\end{subequations}
where the $ \widetilde{\mat J}^{m+1}$ is the Jacobian matrix between the initial surface $\mS^0$ and the approximated surface $\widetilde{\mS}^{m+1}$, which can be calculated by 
\begin{equation}\label{eqn:discrete_Jacob}
    \widetilde{\mat J}_l^{m+1} = \nabla_{\widetilde{\mS}^{m+1}}\vec{Y}^{0}\left(\nabla_{\widetilde{\mS}^{m+1}}\vec{Y}^{0}\right)+\vec{\widetilde{n}}_l^{m+1}(\vec{\widetilde{n}}_l^{m+1})^T, \qquad l = 1,2,\dots L.
\end{equation}
The $\vec{Y}^0\in [\mathbb K(\widetilde{\mS}^{m+1})]^3$ is a finite element function with its values at the vertices of $\widetilde{\mS}^{m+1}$ being the vertices of $\mS^0$. Meanwhile, the selection of \(\alpha_k\) and \(A_k(\vec{\chi}^m)\) varies with the order \(k\):
\begin{itemize}
    \item $k = 1: \quad 
    \alpha_1 = 1, \qquad A_1(\vec{\chi}^m)=\vec{\chi}^m$;
    \item $k = 2: \quad  \alpha_2 = \frac{3}{2}, \qquad A_2(\vec{\chi}^m)=2\vec{\chi}^m-\frac{1}{2}\vec{X}^{m-1}$;
    \item $k = 3: \quad \alpha_3 = \frac{11}{6}, \qquad A_3(\vec{\chi}^m)=3\vec{\chi}^m-\frac{3}{2}\vec{\chi}^{m-1}+\frac{1}{3}\vec{\chi}^{m-2}$;
     \item $k = 4: \quad \alpha_4 = \frac{25}{12}, \qquad A_4(\vec{\chi}^m)=4\vec{\chi}^m-3\vec{\chi}^{m-1}+\frac{4}{3}\vec{\chi}^{m-2}-\frac{1}{4}\vec{\chi}^{m-3}$.
\end{itemize}

\begin{rem}
For the MD-BDFk methods, we need to prepare the initial data \(\vec{X}^0, \vec{X}^1, \dots, \vec{X}^{k-1}\), which should serve as the \(k\)-th order approximations to \(\vec{X}(\cdot,0), \vec{X}(\cdot,t_1), \dots, \vec{X}(\cdot,t_{k-1})\) at the respective time points.
In general, it is sufficient to apply the MD-BDF1 method to obtain the initial data of the MD-BDFk methods ($k> 1$) with an appropriate time step size of $\widetilde{\Delta t} \sim \Delta t^{k-1}$ by $\Delta t /\widetilde{\Delta t}$ steps. 
For the MD-BDF1 method, $\widetilde{\mS}^{m+1}$ can be directly selected as $\mS^m$, while for the high-order MD-BDFk methods ($k\geq 2$), we can predict $\widetilde{\mS}^{m+1}$ by solving the lower-order method. For instance, $\widetilde{\mS}^{m+1}$ for the MD-BDF3 method can be obtained from the solution of the MD-BDF2 method. 
 \end{rem}

 \begin{rem}\label{rem:structure}
Similar to the isotropic case discussed in \cite{duan24,gao2025geometric}, the mesh quality of the MD-BDFk methods shows significant improvement over the BGN method in certain special anisotropic scenarios.
However, proving the volume conservation and energy stability properties of the MD-BDFk methods remains a significant challenge. 
Although we can demonstrate the volume conservation of the VC-MD-BDF1 method, it is still limited to first-order time discretization. Recently, using the LM approach, Gao and Li \cite{gao2025geometric} developed a structure-preserving algorithm based on the MD formulation for mean curvature flow and SDF, but this work is still limited to first-order time discretization and isotropic case. 
In the following three sections, we design high-order time discretization schemes with structure-preserving properties by combining the SAV or/and LM methods, namely the SAV-MD-BDFk methods, the LM-MD-BDFk methods and the LM-SAV-MD-BDFk methods. Each of these schemes has its own advantages. 
For example, the SAV-MD-BDFk method is a linear scheme, which significantly improves computational efficiency compared to the nonlinear LM-MD-BDFk method, and energy stability can be theoretically proven for $1\leq k\leq 4$. However, the volume conservation property of the SAV-MD-BDFk methods cannot be proven, and only an approximate volume-preserving result can be obtained. 
The LM-MD-BDFk methods, on the other hand, can achieve volume conservation for $1\leq k\leq 4$, but the proof of energy stability is limited to first- and second-order schemes. 
The methods become unstable as they approach equilibrium. 
By fully considering the advantages of both methods, we further design the improved LM-SAV-MD-BDFk methods, which can simultaneously achieve approximate volume conservation and energy stability for \(1\leq k\leq 4\), while maintaining stability as they approach equilibrium. 

 \end{rem}

 \section{The SAV-MD-BDFk methods for anisotropic SDF}\label{sec:SAV}
In this section, we develop a class of unconditionally energy-stable schemes by combining the SAV approach \cite{huang2022new} with the MD formulation.

\subsection{SAV-MD-BDFk methods and VC-MD-BDFk method}
According to the \eqref{eqn:geo_physic}, we define a new energy function $W_c(t)$:
\begin{equation}\label{eqn:new_ener}
    W_c(t):=W(t)+C,
\end{equation}
where $C\in \mathbb{R}$  is a positive constant introduced for theoretical purposes. Based on \eqref{eqn:ener_diss}, there holds  $\frac{dW_c(t)}{dt} = \frac{dW(t)}{dt}= -(\nabla_\mS \mu, \nabla_\mS \mu)_{\mS(t)}$. On the definition of $W_c(t)$, we further introduce the following time-dependent variables:
\begin{equation}\label{eqn:two_SAV}
    R(t):=W_c(t), \qquad \zeta(t):=\frac{R(t)}{W_c(t)}\equiv 1.
\end{equation}
Combining \eqref{eqn:new2MD_SDF} and \eqref{eqn:two_SAV}, a new formulation can be introduced as follows:
\begin{subequations}\label{eqn:new3_MD}
    \begin{alignat}{3}
      &\left(\partial_t \vec{X}\circ \vec{X}^{-1}\right) \cdot \vec{n} = \Delta_\mS \mu, \quad  &&\text{on} \quad \mS(t),\\
      & \mu = -\nabla_\mS \cdot (\mat{Z}_k(\vec{n})\nabla_\mS \vec{\text{id}})\vec{n},\quad  &&\text{on} \quad \mS(t), \\
     & \left(\kappa \vec{n}\right)\circ \vec{X} = \Delta_{\mS^0} \vec{X},\quad  &&\text{on} \quad \mS^0,\\& \label{eqn:R_dis}
     \frac{dR(t)}{dt}=-\zeta(t)\left(\nabla_\mS \mu, \nabla_\mS \mu \right)_{\mS(t)},\quad  &&\text{on} \quad \mS(t).
    \end{alignat}
\end{subequations}

We also employ the BDFk schemes for time discretization and apply the PFEM for spatial discretization to obtain the following full discrete scheme (denoted as SAV-MD-BDFk methods). Given \(\vec{\chi}^m, \vec{\chi}^{m-1}, \dots, \vec{\chi}^{m-k+1} \in [\mathbb{K}(\widetilde{\mS}^{m+1})]^3\) and \(R^m\), the solution is to find \(\left(\vec{\overline{X}}^{m+1}, \vec{\overline{\chi}}^{m+1}, \overline{\mu}^{m+1}, \overline{\kappa}^{m+1}\right) \in [\mathbb{K}(\mS^0)]^3 \times [\mathbb{K}(\widetilde{\mS}^{m+1})]^3 \times \mathbb{K}(\widetilde{\mS}^{m+1}) \times \mathbb{K}(\widetilde{\mS}^{m+1})\), \(\zeta^{m+1}\), \(R^{m+1}\), and \(\left(\vec{X}^{m+1}, \mu^{m+1}, \kappa^{m+1}\right) \in [\mathbb{K}(\mS^0)]^3 \times \mathbb{K}(\widetilde{\mS}^{m+1}) \times \mathbb{K}(\widetilde{\mS}^{m+1})\), such that 
\begin{subequations} \label{eqn:SAV_scheme}
    \begin{align}
      \label{eqn:SAV_1}&\left(\frac{\alpha_k\vec{\overline{\chi}}^{m+1}-A_k(\vec{\chi}^m)}{\Delta t}\cdot \vec{\widetilde{v}}^{m+1}, \varphi\right)_{\widetilde{\mS}^{m+1}}^h + \left(\nabla_\mS \overline{\mu}^{m+1},\nabla_\mS \varphi\right)_{\widetilde{\mS}^{m+1}}^h = 0, \qquad \forall \varphi \in \mathbb{K}(\widetilde{\mS}^{m+1}),\\ \label{eqn:SAV_2}
         &\left(\overline{\mu}^{m+1}, \phi \right)_{\widetilde{\mS}^{m+1}}^h-\left(\mat{Z}_k \left(\vec{\widetilde{n}}^{m+1}\right)\nabla_{\mS}\vec{\overline{\chi}}^{m+1},\nabla_\mS \left(\vec{\widetilde{v}}^{m+1}\phi\right)\right)_{\widetilde{\mS}^{m+1}}^h = 0, \qquad \forall \phi \in \mathbb{K}(\widetilde{\mS}^{m+1}),\\ \label{eqn:SAV_3}
         &\left(\overline{\kappa}^{m+1}, \vec{\widetilde{v}}^{m+1}\cdot \vec{\omega}\sqrt{\text{det}\left( \widetilde{\mat J}^{m+1}\right)}\right)_{\widetilde{\mS}^{m+1}}^h+\left(\nabla_{\mS}\vec{\overline{\chi}}^{m+1}\left(\widetilde{\mat J}^{m+1}\right)^{-1},\nabla_{\mS}\vec{\omega}\sqrt{\text{det}\left( \widetilde{\mat J}^{m+1}\right)}\right)_{\widetilde{\mS}^{m+1}}^h = 0, \qquad \forall \vec{\omega}\in [\mathbb{K}(\widetilde{\mS}^{m+1})]^3,
         \\ & \label{eqn:SAV_7} \vec{\overline{X}}^{m+1} = \vec{\overline{\chi}}^{m+1}\circ \vec{\widetilde{X}}^{m+1},
         \\& \label{eqn:SAV_4}
    \frac{R^{m+1}-R^m}{\Delta t} = -\zeta^{m+1}\left(\nabla_\mS \overline{\mu}^{m+1},\nabla_\mS \overline{\mu}^{m+1}\right)_{\overline{\mS}^{m+1}}^h \quad \text{with} \quad \zeta^{m+1}=\frac{R^{m+1}}{W_c\left(\vec{\overline{X}}^{m+1}\right)}, \\& \label{eqn:SAV_5}
    \eta^{m+1} = 1-(1-\zeta^{m+1})^r, \\ & \label{eqn:SAV_6}
    \vec{X}^{m+1} = \eta^{m+1}\vec{\overline{X}}^{m+1}, \quad \mu^{m+1}=\eta^{m+1}\overline{\mu}^{m+1} \quad \text{and} \quad \kappa^{m+1}=\eta^{m+1}\overline{\kappa}^{m+1}.  
    \end{align}
\end{subequations}

To better understand the above methods, we introduce the following algorithm for efficient computation.
\begin{algorithm}[htbp]
\caption{SAV-MD-BDFk methods}
\label{alg:3}
\begin{algorithmic}[1]
\REQUIRE The evolving surface over the preceding $k$ time intervals: $\vec{X}^m, \cdots, \vec{X}^{m-k+1}$; The approximating surface of $\mS(t_{m+1})$: $\vec{\widetilde{X}}^{m+1}$; The modified  energy at time $t_m$:  $R^m$.

\STATE Substituting $\vec{\widetilde{X}}^{m+1}, \vec{\chi}^m, \cdots, \vec{\chi}^{m-k+1}$ to \eqref{eqn:SAV_1}-\eqref{eqn:SAV_3}, we firstly get $\vec{\overline{\chi}}^{m+1},  \overline{\mu}^{m+1}, 
\overline{\kappa}^{m+1}$. 
\STATE $\vec{\overline{X}}^{m+1}:\mS^0 \to \overline{\mS}^{m+1}$ is obtained through \eqref{eqn:SAV_7}.
\STATE Observing from \eqref{eqn:SAV_4}, $R^{m+1}$ is calculated by
    \begin{equation}\label{eqn:R}
        R^{m+1} = \frac{R^mW_c(\vec{\overline{X}}^{m+1})}{W_c(\vec{\overline{X}}^{m+1})+\Delta t \left(\nabla_{\mS} \overline{\mu}^{m+1},\nabla_{\mS}\overline{\mu}^{m+1}\right)_{\overline{\mS}^{m+1}}^h}.
    \end{equation}
   Then $\zeta^{m+1}$ is obtained using \eqref{eqn:SAV_4}.
\STATE According to \eqref{eqn:SAV_5} and \eqref{eqn:SAV_6}, we get $\eta^{m+1}$, $\vec{X}^{m+1}$, $\mu^{m+1}$ and $\kappa^{m+1}$.
\end{algorithmic}
\end{algorithm}
\begin{rem}\label{rem:zeta_mean}
The introduction of \( R(t) \) results in an unconditionally stable energy \( R^m \) for the SAV-MD-BDFk methods, and the stability of this energy is independent of the BDFk schemes. Moreover, \( \zeta^m \) serves two main purposes: 1) It helps prove the boundedness of the original energy \( W_c(\vec{X}^m) \) (see Theorem 3.2); 2) It plays a crucial role in the construction of a high-order temporal scheme, as explained below:
\begin{itemize}
    \item From \eqref{eqn:SAV_4}, it can be observed that since the first-order Euler method is used to discretize \( R(t) \), \( R^m \) is a first-order approximation of \( W_c(\vec{\overline{X}}^m) \). Therefore, \( \zeta^m \) is a first-order approximation of 1.
    \item To preserve high-order temporal accuracy, the value of \( r \) must be chosen appropriately. By multiplying \eqref{eqn:SAV_1} by \( \eta^{m+1} \), we have
\begin{equation}\label{eqn:eta_multi}
\left(\frac{\alpha_k\vec{\chi}^{m+1}-\eta^{m+1}A_k(\vec{\chi}^m)}{\Delta t}\cdot \vec{\widetilde{v}}^{m+1}, \varphi\right)_{\widetilde{\mS}^{m+1}}^h + (\nabla_\mS \mu^{m+1},\nabla_\mS \varphi)_{\widetilde{\mS}^{m+1}}^h = 0.
\end{equation}
Then, noting that $\eta^{m+1}$ is a $r$-order approximation to 1 according to \eqref{eqn:SAV_5}, we can directly obtain
\begin{equation}
\left(\frac{\alpha_k\vec{\chi}^{m+1}-A_k(\vec{\chi}^m)}{\Delta t}\cdot \vec{\widetilde{v}}^{m+1}, \varphi\right)_{\widetilde{\mS}^{m+1}}^h + (\nabla_\mS \mu^{m+1},\nabla_\mS \varphi)_{\widetilde{\mS}^{m+1}}^h = O(\Delta t^{r-1}).
\end{equation}
Therefore, the temporal accuracy of the BDFk schemes is unaffected as long as \( r \geq k+1 \).
\end{itemize}
\end{rem}
Similar to the VC-MD-BDF1 method, we can also develop a first-order approximate volume-conserving scheme for the SAV method (denoted by VC-SAV-MD-BDF1 method): given the initial closed surface $\mS^0:=\vec{X}^0(\cdot)\in [\mathbb{K}(\mS^0)]^3$, find the solution $(\vec{\overline{X}}^{m+1},\vec{\overline{\chi}}^{m+1},\overline{\mu}^{m+1},\overline{\kappa}^{m+1})\in [\mathbb{K}(\mS^0)]^3 \times [\mathbb{K}(\mS^m)]^3 \times \mathbb{K}(\mS^m) \times \mathbb{K}(\mS^m)$, and $\left(\vec{X}^{m+1},\mu^{m+1},\kappa^{m+1}\right)\in [\mathbb{K}(\mS^0)]^3 \times \mathbb{K}(\mS^m) \times \mathbb{K}(\mS^m)$, such that
\begin{subequations} \label{eqn:VCSAV_scheme}
    \begin{align}
      \label{eqn:VCSAV_1}&\left(\frac{\vec{\overline{\chi}}^{m+1}-\vec{\text{id}}}{\Delta t}\cdot \vec{n}^{m+\frac{1}{2}}, \varphi\right)_{\mS^{m}}^h + (\nabla_\mS \overline{\mu}^{m+1},\nabla_\mS \varphi)_{\mS^{m}}^h = 0, \qquad \forall \varphi \in \mathbb{K}(\mS^{m}),\\ \label{eqn:VCSAV_2}
         &\left(\overline{\mu}^{m+1}, \phi \right)_{\mS^m}^h-\left(\mat{Z}_k \left(\vec{n}^{m}\right)\nabla_{\mS}\vec{\overline{\chi}}^{m+1},\nabla_\mS \left(\vec{v}^{m}\phi\right)\right)_{\mS^{m}}^h = 0, \qquad \forall \phi \in \mathbb{K}(\mS^{m}),\\ \label{eqn:VCSAV_3}
         &\left(\overline{\kappa}^{m+1}, \vec{v}^{m}\cdot \vec{\omega}\sqrt{\text{det}\left( \mat J^{m}\right)}\right)_{\mS^{m}}^h+\left(\nabla_{\mS}\vec{\overline{\chi}}^{m+1}\left(\mat J^{m}\right)^{-1},\nabla_{\mS}\vec{\omega}\sqrt{\text{det}\left(\mat J^{m}\right)}\right)_{\mS^{m}}^h = 0, \qquad \forall \vec{\omega}\in [\mathbb{K}(\mS^{m})]^3,
         \\ & \label{eqn:VCSAV_4} \vec{\overline{X}}^{m+1} = \vec{\overline{\chi}}^{m+1}\circ \vec{X}^{m},
         \\& \label{eqn:VCSAV_5}
    \frac{R^{m+1}-R^m}{\Delta t} = -\zeta^{m+1}(\nabla_\mS \overline{\mu}^{m+1},\nabla_\mS \overline{\mu}^{m+1})_{\overline{\mS}^{m+1}}^h \quad \text{with} \quad \zeta^{m+1}=\frac{R^{m+1}}{W_c\left(\vec{\overline{X}}^{m+1}\right)}, \\& \label{eqn:VCSAV_6}
    \eta^{m+1} = 1-(1-\zeta^{m+1})^r, \\ & \label{eqn:VCSAV_7}
    \vec{X}^{m+1} = \eta^{m+1}\vec{\overline{X}}^{m+1}, \quad \mu^{m+1}=\eta^{m+1}\overline{\mu}^{m+1} \quad \text{and} \quad \kappa^{m+1}=\eta^{m+1}\overline{\kappa}^{m+1}.
    \end{align}
\end{subequations}

\subsection{Some properties of SAV methods}
For the SAV-MD-BDFk methods and the VC-SAV-MD-BDF1 method, we can obtain the following stability result and demonstrate the boundedness of the original energy. 
\begin{thm}\label{thm:ener_stable}
    For the SAV-MD-BDFk methods and the VC-SAV-MD-BDF1 method, given the energy $R^m\ge0$,  there hold
    \begin{itemize}
        \item [(i)]  The energy $R^{m+1}$ and 
        $\zeta^{m+1}$ are non-negative, i.e.,
        \begin{align}
            R^{m+1}\ge0,\qquad   \zeta^{m+1}\ge0, \qquad \forall m\geq 0.
        \end{align}
        \item [(ii)] The energy $R^{m+1}$ is unconditionally stable in the sense of 
        \begin{equation}\label{eqn:stable}
            R^{m+1}-R^{m}=-\Delta t \zeta^{m+1}\left(\nabla_{\mS}\overline{\mu},\nabla_{\mS}\overline{\mu}\right)_{\overline{\mS}^{m+1}} \le 0,\qquad \forall m\geq 0.
        \end{equation}
        \item [(iii)] The original energy $W(\vec{X}^m)$ is bounded, i.e., 
        \begin{equation}\label{eqn:bounded}
            W\left(\vec{X}^m\right)\le M_r^2,\qquad \forall m\geq 0, 
        \end{equation}
        where $M_r$ is a positive constant that  depends on $r$. 
    \end{itemize}
\end{thm}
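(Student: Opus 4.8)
The plan is to dispatch (i) and (ii) by a short induction built on the explicit update \eqref{eqn:R} for $R^{m+1}$, and then to deduce (iii) from the two-sided bound $0\le R^m\le R^0$ together with the fact that the correction step \eqref{eqn:SAV_6} rescales the whole polyhedral surface by the scalar $\eta^{m+1}$.

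For (i), observe that in \eqref{eqn:R} the denominator $W_c(\vec{\overline{X}}^{m+1})+\Delta t\bigl(\nabla_\mS\overline{\mu}^{m+1},\nabla_\mS\overline{\mu}^{m+1}\bigr)_{\overline{\mS}^{m+1}}^h$ is strictly positive, since $W_c=W+C\ge C>0$ (the discrete energy $W=\sum_l|\sigma_l|\gamma(\vec{n}_l)$ being nonnegative), $\Delta t>0$, and the Dirichlet-type term is a nonnegative square. Hence $R^m\ge0$ forces $R^{m+1}\ge0$, and then $\zeta^{m+1}=R^{m+1}/W_c(\vec{\overline{X}}^{m+1})\ge0$; running the induction from $R^0=W(\mS^0)+C>0$ yields the claim for every $m$. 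Part (ii) is then immediate: rewriting \eqref{eqn:SAV_4} as $R^{m+1}-R^m=-\Delta t\,\zeta^{m+1}\bigl(\nabla_\mS\overline{\mu}^{m+1},\nabla_\mS\overline{\mu}^{m+1}\bigr)_{\overline{\mS}^{m+1}}^h$, the right-hand side is $\le0$ by (i), so $(R^m)_m$ is non-increasing and $0\le R^m\le R^0$ for all $m$.

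For (iii), I would first bound $\zeta^{m+1}$ from above: eliminating $R^{m+1}$ between the two identities in \eqref{eqn:SAV_4} gives $\zeta^{m+1}\bigl(W_c(\vec{\overline{X}}^{m+1})+\Delta t(\nabla_\mS\overline{\mu}^{m+1},\nabla_\mS\overline{\mu}^{m+1})_{\overline{\mS}^{m+1}}^h\bigr)=R^m$, hence $\zeta^{m+1}\le R^m/W_c(\vec{\overline{X}}^{m+1})\le R^0/C=:K$, a fixed constant — this is precisely the role of the free constant $C$, and I would moreover take $C>W(\mS^0)$ so that $K<2$. The key structural step is that \eqref{eqn:SAV_6} is a pure dilation: since $\gamma$ is positively one-homogeneous, replacing $\vec{\overline{X}}^{m+1}$ by $\eta^{m+1}\vec{\overline{X}}^{m+1}$ leaves each face normal unchanged and multiplies each face area by $(\eta^{m+1})^2$, so $W(\vec{X}^{m+1})=(\eta^{m+1})^2\,W(\vec{\overline{X}}^{m+1})$; here $\eta^{m+1}=1-(1-\zeta^{m+1})^r>0$ because $\zeta^{m+1}\in(0,2)$ gives $|(1-\zeta^{m+1})^r|<1$. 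Combining this with $W(\vec{\overline{X}}^{m+1})\le W_c(\vec{\overline{X}}^{m+1})=R^{m+1}/\zeta^{m+1}\le R^0/\zeta^{m+1}$ yields
\[
W(\vec{X}^{m+1})\le \frac{\bigl(1-(1-\zeta^{m+1})^r\bigr)^2}{\zeta^{m+1}}\,R^0 .
\]
It then remains to bound the scalar map $g(\zeta):=\bigl(1-(1-\zeta)^r\bigr)^2/\zeta$ on $(0,K]$; since $1-(1-\zeta)^r=r\zeta+O(\zeta^2)$ it extends continuously to $\zeta=0$ with $g(0)=0$, hence $\sup_{[0,K]}g=:\Lambda_r<\infty$ depends only on $r$ (and $K$), and $M_r^2:=\Lambda_r R^0$ gives \eqref{eqn:bounded}. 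The degenerate case $\zeta^{m+1}=0$ forces $\eta^{m+1}=0$, whence $W(\vec{X}^{m+1})=0$; the base case $m=0$ follows from $\Lambda_r\ge g(1)=1$; and the VC-SAV-MD-BDF1 method is handled verbatim since \eqref{eqn:VCSAV_5}--\eqref{eqn:VCSAV_7} share the form of \eqref{eqn:SAV_4}--\eqref{eqn:SAV_6}.

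I expect the only real work to lie in (iii): one must simultaneously use the freedom in $C$ to control $\zeta^{m+1}$ (and hence $\eta^{m+1}$) and recognize that the correction \eqref{eqn:SAV_6} is a dilation, so that one-homogeneity of $\gamma$ turns scalar estimates on $\eta^{m+1}$ and $\zeta^{m+1}$ into an estimate on $W$. Once those two points are in place, bounding $g$ on $[0,K]$ and treating the $\zeta^{m+1}=0$ and $m=0$ edge cases are routine, and parts (i) and (ii) are purely algebraic.
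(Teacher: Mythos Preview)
Your proof is correct and follows essentially the same route as the paper: parts (i)--(ii) via the explicit update \eqref{eqn:R}, and part (iii) via the dilation identity $W(\vec{X}^{m+1})=(\eta^{m+1})^2W(\vec{\overline{X}}^{m+1})$ combined with the bound $\zeta^{m+1}\le R^0/W_c(\vec{\overline{X}}^{m+1})$. The only cosmetic difference is the final step of (iii): the paper keeps $W(\vec{\overline{X}}^{m+1})$ in the denominator of the $\zeta$-bound and uses the factorization $\eta=\zeta\,P_{r-1}(\zeta)$ to get $|\eta^{m+1}|\le M_r/(W(\vec{\overline{X}}^{m+1})+1)$ and hence $W(\vec{X}^{m+1})\le M_r^2\,W/(W+1)^2\le M_r^2$, whereas you invert to $W(\vec{\overline{X}}^{m+1})\le R^0/\zeta^{m+1}$ and bound the scalar function $g(\zeta)=(1-(1-\zeta)^r)^2/\zeta$ on a compact interval.
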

\begin{proof}
From \eqref{eqn:R} and \eqref{eqn:SAV_4}, we can directly get $R^{m+1}\ge0$. In addition, $\zeta^{m+1}$ can be formulated as
\begin{equation} \label{eqn:zeta}
    \zeta^{m+1} = \frac{R^{m+1}}{W_c\left(\vec{\overline{X}}^{m+1}\right)} =\frac{R^m}{W_c\left(\vec{\overline{X}}^{m+1}\right)+\Delta t \left(\nabla_{\mS} \overline{\mu}^{m+1},\nabla_{\mS}\overline{\mu}^{m+1}\right)_{\overline{\mS}^{m+1}}^h},
\end{equation}
 which means that $\zeta^{m+1}\ge 0$. Observing from \eqref{eqn:SAV_4}, \eqref{eqn:stable} naturally holds.

 Due to \eqref{eqn:stable}, we have $R^{m+1}\le R^{m} \le \cdots \le R^0$. Let's denote $M = R^0$, which implies that $R^m \le M$. Without loss of generality, we assume $C\geq 1$. Then, it can be conclude from \eqref{eqn:zeta} that
\begin{equation}
    0 \le \zeta^{m+1}\le \frac{M}{W_c\left(\vec{\overline{X}}^{m+1}\right)} \le \frac{M}{W\left(\vec{\overline{X}}^{m+1}\right)+1}.
\end{equation}
On the definition of $\eta^{m+1}$, it is a $r$-th degree polynomial in regard to $\zeta^{m+1}$. Hence, we can further obtain the boundedness for $\eta^{m+1}$:
\begin{equation}\label{eqn:bound_eta}
    \left|\eta^{m+1}\right| = \left|P_{r-1}(\zeta^{m+1})\zeta^{m+1}\right|\le \frac{M_r}{W\left(\vec{\overline{X}}^{m+1}\right)+1},
\end{equation}
where $M_r$ is a constant that depends on $r$. According to \eqref{eqn:n_relate}, \eqref{eqn:area_relate}, \eqref{eqn:bound_eta} and \eqref{eqn:disc_ener}, we get
\begin{equation}
    W\left(\vec{X}^{m+1}\right)=(\eta^{m+1})^2 W\left(\vec{\overline{X}}^{m+1}\right)= \frac{M_r^2 W\left(\vec{\overline{X}}^{m+1}\right)}{\left(W\left(\vec{\overline{X}}^{m+1}\right)+1\right)^2} \le M_r^2.
\end{equation}
Hence, the boundedness of the original energy has been proven.
\end{proof}

\begin{rem}
    From the proof process mentioned above, it can be seen that the introduction of $\vec{\overline{X}}^{m+1}$ and $\zeta^{m+1}$ plays an important role in proving the boundedness of the energy. What's more, this property is unrelated to \eqref{eqn:SAV_1} and only involves \eqref{eqn:SAV_4}-\eqref{eqn:SAV_6}, which means the boundedness of the original energy holds for the SAV-MD-BDFk methods and the VC-SAV-MD-BDF1 method. More interestingly, the introduction of the SAV method does not alter the structure of the MD-BDFk methods, thus ensuring that the mesh quality is well preserved. 
\end{rem}

The VC-SAV-MD-BDF1 method does not strictly preserve the volume. From \eqref{eqn:VCSAV_1} to \eqref{eqn:VCSAV_4}, we can obtain $V\left(\vec{\overline{X}}^{m+1}\right)=V\left(\vec{X}^m\right)$. Based on this equation, the following approximate volume conservation result can be proven.
\begin{thm} \label{thm:VC-SAV-MD-BDF1}
Let $\left(\vec{X}^{m+1},\mu^{m+1},\kappa^{m+1}\right)$ be the numerical solution of the VC-SAV-MD-BDF1 method. Then, there holds 
\begin{equation}
    V\left(\vec{X}^{m+1}\right)-V\left(\vec{X}^m\right) = O(\Delta t^r), \qquad m \ge 0.
\end{equation}
\end{thm}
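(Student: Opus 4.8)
The plan is to treat the final update \eqref{eqn:VCSAV_7} as a global rescaling of the intermediate surface and to exploit the homogeneity of the discrete volume functional, so that the statement reduces to the \emph{exact} volume conservation of the intermediate (barred) solution together with the high-order consistency of the rescaling factor $\eta^{m+1}$.

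First I would record the exact identity $V(\vec{\overline{X}}^{m+1}) = V(\vec{X}^m)$ for the barred solution, which is stated just before the theorem and is proved exactly as for the VC-MD-BDF1 method: the geometric relation $V(\vec{\overline{X}}^{m+1}) - V(\vec{X}^m) = \bigl([\vec{\overline{\chi}}^{m+1}-\vec{\text{id}}]\cdot\vec{n}^{m+\frac{1}{2}},\,1\bigr)_{\mS^m}^h$ --- which is precisely why the time-weighted normal \eqref{eqn:normal_vectors} is introduced --- combined with the test function $\varphi\equiv\Delta t\in\mathbb{K}(\mS^m)$ in \eqref{eqn:VCSAV_1}, whose stiffness term vanishes since $\nabla_\mS\varphi=0$.

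Next I would use that the discrete volume in \eqref{eqn:disc_vol} is positively homogeneous of degree three: scaling every vertex by $\lambda>0$ multiplies each area $|\sigma_l^m|$ by $\lambda^2$ and each vertex $\vec{q}_{l_i}^m$ by $\lambda$, while leaving the elementwise unit normals unchanged, hence $V(\lambda\vec{Y}) = \lambda^3 V(\vec{Y})$. Applying this with $\vec{Y}=\vec{\overline{X}}^{m+1}$, $\lambda=\eta^{m+1}$ and invoking \eqref{eqn:VCSAV_7} and the previous step gives $V(\vec{X}^{m+1}) = (\eta^{m+1})^3 V(\vec{\overline{X}}^{m+1}) = (\eta^{m+1})^3 V(\vec{X}^m)$, and therefore $V(\vec{X}^{m+1}) - V(\vec{X}^m) = \bigl[(\eta^{m+1})^3 - 1\bigr] V(\vec{X}^m)$. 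It then remains to bound the two factors: by Theorem \ref{thm:ener_stable}(iii) the energy $W(\vec{X}^m)$ is bounded uniformly in $m$, so the surface area (controlled by $W(\vec{X}^m)/\min_{\vec{n}}\gamma(\vec{n})$) and hence, by the isoperimetric inequality, $|V(\vec{X}^m)|$ are bounded uniformly in $m$; while the consistency argument of Remark \ref{rem:zeta_mean} --- the first-order Euler discretization \eqref{eqn:VCSAV_5} makes $R^{m+1}$ a first-order approximation of $W_c(\vec{\overline{X}}^{m+1})$, so $1-\zeta^{m+1}=O(\Delta t)$ --- fed into \eqref{eqn:VCSAV_6} yields $\eta^{m+1} = 1-(1-\zeta^{m+1})^r = 1+O(\Delta t^r)$, hence $(\eta^{m+1})^3-1 = O(\Delta t^r)$. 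Multiplying the two bounds gives the claim.

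The main obstacle is making the estimate $1-\zeta^{m+1}=O(\Delta t)$ fully rigorous with a constant independent of $m$: this requires the inductive consistency bound $|R^m - W_c(\vec{\overline{X}}^m)| = O(\Delta t)$ together with sufficient regularity of the underlying anisotropic SDF, in the spirit of the standard SAV error analysis cited in the paper. Once this input is granted, everything else --- the test-function trick for the barred scheme, the homogeneity identity, and the isoperimetric bound on $V$ --- is elementary, so the theorem is essentially a corollary of the exact volume conservation of the barred scheme and the design choice $r\ge k+1$.
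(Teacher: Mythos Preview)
Your proposal is correct and follows essentially the same route as the paper: the identity $V(\vec{\overline{X}}^{m+1})=V(\vec{X}^m)$ from the test function $\varphi\equiv\Delta t$, the cubic homogeneity $V(\vec{X}^{m+1})=(\eta^{m+1})^3 V(\vec{X}^m)$ obtained from \eqref{eqn:VCSAV_7} via the scaling relations for the element areas and normals, and the consistency $\eta^{m+1}=1+O(\Delta t^r)$ from Remark~\ref{rem:zeta_mean}. The only difference is in the final bookkeeping: the paper iterates $V(\vec{X}^m)=\prod_{i=1}^m(\eta^i)^3 V(\vec{X}^0)$ back to the initial volume and then takes the limit $\Delta t\to 0$, whereas you bound $|V(\vec{X}^m)|$ uniformly in $m$ via Theorem~\ref{thm:ener_stable}(iii) and the isoperimetric inequality --- your route gives a cleaner uniform-in-$m$ bound, while the paper's avoids the (mild) appeal to isoperimetry for polyhedral surfaces.
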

\begin{proof}
    From \eqref{eqn:VCSAV_7}, we can get the relationship between $|\sigma_l^{m+1}|$ and $|\overline{\sigma}_l^{m+1}|$:
\begin{equation}\label{eqn:area_relate}
    \left|\sigma_l^{m+1}\right| = (\eta^{m+1})^2\frac{\left|(\vec{\overline{q}}_{l_2}-\vec{\overline{q}}_{l_1})\times (\vec{\overline{q}}_{l_3}-\vec{\overline{q}}_{l_1})\right|}{2} = (\eta^{m+1})^2 \left|\overline{\sigma}_l^{m+1}\right|, \qquad \forall 1\le l \le L.
\end{equation}
Moreover, noticing that the definition of the element normal vector $\vec{n}_l^m$, we have 
\begin{equation}\label{eqn:n_relate}
    \vec{n}_l^{m+1} = \frac{(\eta^{m+1})^2(\vec{\overline{q}}_{l_2}-\vec{\overline{q}}_{l_1})\times (\vec{\overline{q}}_{l_3}-\vec{\overline{q}}_{l_1})}{(\eta^{m+1})^2|(\vec{\overline{q}}_{l_2}-\vec{\overline{q}}_{l_1})\times (\vec{\overline{q}}_{l_3}-\vec{\overline{q}}_{l_1})|}=\vec{\overline{n}}_l^{m+1}, \qquad \forall 1\le l \le L.
\end{equation}
Therefore, based on \eqref{eqn:area_relate} and \eqref{eqn:n_relate}, we can directly get 
\begin{equation}\label{eqn:vol_relate}
    V\left(\vec{X}^{m+1}\right) = (\eta^{m+1})^3V\left(\vec{\overline{X}}^{m+1}\right)=(\eta^{m+1})^3V\left(\vec{X}^{m}\right).
\end{equation}
Noting that $\eta^0 = 1$ and $\eta^m = 1+ O(\Delta t ^r), \forall m>0$, we obtain
\begin{align} \label{eqn:iden}
   \notag \lim_{\Delta t \to 0} \frac{V\left(\vec{X}^{m+1}\right)-V\left(\vec{X}^{m}\right)}{\Delta t^t} &= \lim_{\Delta t \to 0}\frac{\left(\left(\eta^{m+1}\right)^3-1\right)\prod_{i=1}^{m}\left(\eta^i\right)^3V\left(\vec{X}^0\right)}{\Delta t^r}\\ & \notag = \lim_{\Delta t \to 0}\frac{\left(\eta^{m+1}-1\right)\left(\left(\eta^{m+1}\right)^2+\left(\eta^{m+1}\right)+1\right)\prod_{i=1}^{m}\left(\eta^i\right)^3V\left(\vec{X}^0\right)}{\Delta t ^r}\\& = \lim_{\Delta t \to 0}\frac{3\left(\eta^{m+1}-1\right)V\left(\vec{X}^0\right)}{\Delta t^r} = O(1).
\end{align}
Overall, it is straightforward to derive from \eqref{eqn:iden} that $V\left(\vec{X}^{m+1}\right)-V\left(\vec{X}^m\right) = O(\Delta t^r)$.
\end{proof}

\begin{rem}
From Theorem \ref{thm:VC-SAV-MD-BDF1}, it can be observed that the VC-SAV-MD-BDF1 method approximately preserves volume conservation. Based on the volume difference between two adjacent time steps, we can either reduce the time step or increase the value of \( r \), where increasing \( r \) does not introduce additional computational cost. 
We can select a larger value for \( r \), such that the error between \( V(\vec{X}^{m}) \) and \( V(\vec{X}^{0}) \) is close to machine precision, making it virtually identical to a volume-preserving scheme.
\end{rem}

\begin{rem}
The SAV-MD-BDFk methods are linear schemes that can enhance computational efficiency in numerical experiments compared to some nonlinear schemes. The SAV-MD-BDFk methods not only preserve energy stability but also maintain very good mesh quality throughout the evolution process. However, except for the VC-SAV-MD-BDF1 method, which can achieve approximate volume conservation, the SAV-MD-BDFk methods still lack the property of volume conservation. 
In addition, the energy \( R^m \) can be approximately equal to the original energy plus a constant \( C \); however, it remains a modified form of the energy. 
To address above issues, in the next section we combine the LM approach with the MD formulation to construct the structure-preserving LM-MD-BDFk methods. Although the LM-MD-SAV-BDFk methods are nonlinear schemes, they can maintain volume conservation for \(1\leq k\leq 4\). Additionally, both the LM-MD-BDF1 and LM-MD-BDF2 methods can preserve energy stability.
\end{rem}

\section{The LM-MD-BDFk methods for anisotropic SDF} \label{sec:LM_MD}

In this section, we combine the LM approach with the MD formulation to construct a class of  structure-preserving PFEMs for anisotropic SDF.
We introduce two new Lagrange multipliers $\lambda(t)$ and $\varrho(t)$ to \eqref{eqn:model_MD1}, given by 
\begin{equation}\label{eqn:model_LM1}
    \left(\partial_t \vec{X}\circ \vec{X}^{-1}\right) \cdot \vec{n} = \Delta_\mS \mu + a_1\lambda(t) \mu + a_2 \varrho(t), \qquad \text{on} \quad \mS(t),
\end{equation}
where $a_1,a_2 \in \{0,1\}$. In order to ensure the equivalence of the two systems, it is necessary to enforce that \eqref{eqn:volume_diss} and \eqref{eqn:ener_diss} hold. Therefore, we consider the following system:
\begin{subequations} \label{eqn:model_LM}
    \begin{alignat}{3}
       \label{eqn:VPLM_MD1} &\left(\partial_t \vec{X}\circ \vec{X}^{-1}\right) \cdot \vec{n} = \Delta_\mS \mu+ a_1\lambda(t) \mu + a_2\varrho(t), \qquad &&\text{on} \quad \mS(t),\\ \label{eqn:VPLM_MD2}
      & \mu = -\nabla_\mS \cdot (\mat{Z}_k(\vec{n})\nabla_\mS \vec{\text{id}})\vec{n}, \qquad &&\text{on} \quad \mS(t), \\
     & \label{eqn:VPLM_MD3} \kappa \vec{n} = \Delta_{\mS^0} \vec{X}, \qquad &&\text{on} \quad \mS^0,\\ & \label{eqn:VPLM_MD4}
     a_1\frac{dW(t)}{dt}= -a_1(\nabla_\mS \mu, \nabla_\mS \mu)_{\mS(t)}, \qquad &&\text{on} \quad \mS(t), \\ & \label{eqn:VPLM_MD5}
     a_2\frac{dV(t)}{dt}=0, \qquad &&\text{on} \quad \mS(t). \end{alignat}
\end{subequations}

There are some remarks about this new continuous system:
\begin{itemize}
    \item Case 1: $a_1=1,a_2=0$: Multiplying \eqref{eqn:VPLM_MD1} by $\mu$, integrating it on surface $\mS(t)$, we get
    \begin{equation}\label{eqn:ES_equal}
       \lambda(t) \left
        (\mu,\mu\right)_{\mS(t)} = \frac{dW(t)}{dt}+(\nabla_{\mS}\mu,\nabla_{\mS}\mu)_{\mS(t)}=0.
    \end{equation}
    Therefore, the new system \eqref{eqn:model_LM} is consistent with the original system \eqref{eqn:new2MD_SDF} since $\mu \neq 0$.
    \item Case 2: $a_1=0,a_2=1$: Multiplying \eqref{eqn:VPLM_MD1} by $1$, integrating it on surface $\mS(t)$, we obtain
    \begin{equation}\label{eqn:VC_equal}
       \varrho(t) \left
        (1,1\right)_{\mS(t)} = \frac{dV(t)}{dt}=0.
    \end{equation}
    In this case, the new system \eqref{eqn:model_LM} aligns with the original system \eqref{eqn:new2MD_SDF} since $(1,1)_{\mS(t)} \neq 0$.
    \item Case 3: $a_1=1,a_2=1$: Multiplying \eqref{eqn:VPLM_MD1} by $\mu$ and $1$ respectively, integrating it on $\mS(t)$, we get 
\begin{subequations}\label{eqn:equiv_LM}
    \begin{align}
        &\lambda(t)(\mu,\mu)_{\mS(t)} + \varrho(t)(\mu,1)_{\mS(t)} = -(\nabla_s \mu,\nabla_s \mu)_{\mS(t)} + \frac{dW(t)}{dt} = 0,\\
      & \lambda(t)(\mu,1)_{\mS(t)} + \varrho(t)(1,1)_{\mS(t)} = \frac{dV(t)}{dt} = 0. 
    \end{align}
\end{subequations}
For this system of homogeneous linear equations, we can obtain the coefficient matrix
\begin{equation}
    A = 
\begin{pmatrix}
 \left(\mu,\mu\right)_{\mS(t)} & \left(\mu,1\right)_{\mS(t)} \\ \left(\mu,1\right)_{\mS(t)}
  & \left(1,1\right)_{\mS(t)}
\end{pmatrix}.
\end{equation}
Therefore, to ensure that $\lambda(t)=\varrho(t)=0$, we need to satisfy $\text{det}(A) = \left(\mu,\mu\right)_{\mS(t)}\left(1,1\right)_{\mS(t)}-\left(\mu,1\right)_{\mS(t)}^2\neq 0$. According to the Cauchy-Schwarz inequality, we can conclude that the new system \eqref{eqn:model_LM} is equivalent to the original system \eqref{eqn:new2MD_SDF}, if and only if $\mu$ is not a constant with respect to space variable.
\end{itemize}
\begin{rem}
    For Case 3, it effectively addresses the shortcoming of the MD formulation in structure-preserving algorithms. However, when evolving to the equilibrium state, we have 
    \begin{align}
        \frac{dW(t)}{dt}= -a_1(\nabla_\mS \mu, \nabla_\mS \mu)_{\mS(t)}\rightarrow 0, 
    \end{align}
    which implies that the value of $\mu$ will approach a constant. This might lead to instability in the discrete solution over long periods of evolution. In comparison, the equivalent conditions required for Case 1 and Case 2 are milder, which can adapt to long-term surface evolution. However, the numerical methods for Case 1 and Case 2 cannot simultaneously satisfy both volume conservation and energy stability. These issues will be addressed in the next section. 
\end{rem}

The weak formulation of \eqref{eqn:model_LM} is as follows: given the initial closed surface $\mS(0)\in [H^1(\mS^0)]^3$, find the evolution surface $\mS(t)=\vec{X}(\vec{\rho},t)\in [H^1(\mS^0)]^3$, the chemical potential $\mu \in H^1(\mS(t))$, the scalar function $\kappa \in L^2(\mS(t))$, $\lambda(t)\in \mathbb{R}$ and $\varrho(t)\in \mathbb{R}$ for $t>0$, such that 
\begin{subequations}\label{eqn:weak_LM}
    \begin{alignat}{3}
        \label{eqn:weak_LM1}&\left(\left(\partial_t \vec{X}\circ \vec{X}^{-1}\right)\cdot \vec{n}, \varphi\right)_{\mS(t)} + \left(\nabla_\mS \mu,\nabla_\mS \varphi\right)_{\mS(t)}- a_1 \lambda(t)\left(\mu,\varphi\right)- a_2\varrho(t)\left(1,\varphi\right) = 0, \qquad &&\forall \varphi \in H^1(\mS(t)), \\
    & \label{eqn:weak_LM2}\left(\mu, \phi\right)_{\mS(t)}-\left(\mat{Z}_k(\vec{n})\nabla_{\mS}\vec{\text{id}},\nabla_\mS (\vec{n}\phi)\right)_{\mS(t)} = 0, \qquad &&\forall \phi \in H^1(\mS(t)),\\ \label{eqn:weak_LM3}
    & \left(\kappa\vec{n}, \vec{\omega}\sqrt{\text{det}(\mat J)}\,\right)_{\mS(t)}+\left(\nabla_{\mS}\vec{\text{id}}\mat J^{-1},\nabla_{\mS}\vec{\omega}\sqrt{\text{det}(\mat J)}\,\right)_{\mS(t)} = 0, \qquad &&\forall \vec{\omega}\in [H^1(\mS(t))]^3,\\ & \label{eqn:weak_LM4}
     a_1\frac{dW(t)}{dt}= -a_1\left(\nabla_\mS \mu, \nabla_\mS \mu\right)_{\mS(t)}, \qquad &&\text{on} \quad \mS(t), \\ & \label{eqn:weak_LM5}
     a_2\frac{dV(t)}{dt}=0, \qquad &&\text{on} \quad \mS(t).
    \end{alignat}
\end{subequations}

By utilizing the BDFk schemes for time discretization and the PFEM for spatial discretization, we obtain the following fully discrete scheme (respectively denoted Case 1, Case 2, Case 3 as ES-LM-MD-BDFk, VC-LM-MD-BDFk and SP-LM-MD-BDFk methods): given $\vec{\chi}^m,\vec{\chi}^{m-1},\dots,\vec{\chi}^{m-k+1}\in [\mathbb{K}(\widetilde{\mS}^{m+1})]^3$, find  $(\vec{X}^{m+1},\vec{\chi}^{m+1},\mu^{m+1},\kappa^{m+1},\lambda^{m+1},\varrho^{m+1})\in [\mathbb{K}(\mS^0)]^3 \times [\mathbb{K}(\widetilde{\mS}^{m+1})]^3 \times \mathbb{K}(\widetilde{\mS}^{m+1}) \times \mathbb{K}(\widetilde{\mS}^{m+1})\times \mathbb{R}\times \mathbb{R}$, such that
\begin{subequations}\label{eqn:fullLM_MD}
    \begin{align}
      \label{eqn:fullLM_MD1}  &  \left(\frac{\alpha_k\vec{\chi}^{m+1}-A_k(\vec{\chi}^m)}{\Delta t}\cdot \vec{\widetilde{v}}^{m+1}, \varphi\right)_{\widetilde{\mS}^{m+1}}^h + (\nabla_\mS \mu^{m+1},\nabla_\mS \varphi)_{\widetilde{\mS}^{m+1}}^h- a_1 \lambda^{m+1}\left(\mu^{m+1},\varphi\right)_{\widetilde{\mS}^{m+1}}^h- a_2\varrho^{m+1}\left(1,\varphi\right)_{\widetilde{\mS}^{m+1}}^h = 0,~~~\forall \varphi \in \mathbb{K}(\widetilde{\mS}^{m+1}),\\ \label{eqn:fullLM_MD2}
         &(\mu^{m+1}, \phi)_{\mS^{m+1}}^h-\left(\mat{Z}_k(\vec{\widetilde{n}}^{m+1})\nabla_{\mS}\vec{\chi}^{m+1},\nabla_\mS \left(\vec{\widetilde{v}}^{m+1}\phi\right)\right)_{\widetilde{\mS}^{m+1}}^h = 0, \qquad \forall \phi \in \mathbb{K}(\widetilde{\mS}^{m+1}),\\ \label{eqn:fullLM_MD3} 
         &\left(\kappa^{m+1}, \vec{\widetilde{v}}^{m+1}\cdot \vec{\omega}\sqrt{\text{det}\left( \widetilde{\mat J}^{m+1}\right)}\right)_{\widetilde{\mS}^{m+1}}^h+\left(\nabla_{\mS}\vec{\chi}^{m+1}\left(\widetilde{\mat J}^{m+1}\right)^{-1},\nabla_{\mS}\vec{\omega} \sqrt{\text{det}\left( \widetilde{\mat J}^{m+1}\right)}\right)_{\widetilde{\mS}^{m+1}}^h = 0, \qquad \forall \vec{\omega}\in [\mathbb{K}(\widetilde{\mS}^{m+1})]^3,
         \\  \label{eqn:fullLM_MD4} &a_1\frac{\alpha_k W(\vec{X}^{m+1})-A_k(W(\vec{X}^m))}{\Delta t} = -a_1(\nabla_\mS \mu^{m+1},\nabla_\mS \mu^{m+1})_{\widetilde{\mS}^{m+1}}^h, \\ & \label{eqn:fullLM_MD5}
    a_2\left(\alpha_k V(\vec{X}^{m+1}) -A_k(V(\vec{X}^m))\right) = 0,\\ \label{eqn:fullLM_MD6}
         & \vec{X}^{m+1} = \vec{\chi}^{m+1} \circ \vec{\widetilde{X}}^{m+1}.
    \end{align}
\end{subequations}
\begin{thm}
    For the ES-LM-MD-BDFk $(k=1,2)$ methods and SP-LM-MD-BDFk methods $(k=1,2)$, the energy is stable in the sense that 
    \begin{equation}\label{ener_LM}
        W\left(\vec{X}^{m+1}\right) \le W\left(\vec{X}^m\right)\le \dots \le W\left(\vec{X}^0\right), \qquad \forall m \ge 0.
    \end{equation}
    Additionally, the VC-LM-MD-BDFk methods and SP-LM-MD-BDFk methods are volume-conservative, i.e.
    \begin{equation}\label{eqn:vol_LM_equal}
        V\left(\vec{X}^{m+1}\right)=V\left(\vec{X}^m\right)=\dots=V\left(\vec{X}^0\right), \qquad \forall m \ge 0.
    \end{equation}
\end{thm}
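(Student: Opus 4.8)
The plan is to prove the two assertions separately, in each case by testing the discrete velocity equation \eqref{eqn:fullLM_MD1} against a carefully chosen function and then using the auxiliary equations \eqref{eqn:fullLM_MD4} and \eqref{eqn:fullLM_MD5} together with a standard BDF$k$ ($k=1,2$) ``$G$-stability'' / telescoping argument.

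\textbf{Energy stability (Case 1 and Case 3, $k=1,2$).} First I would take $\varphi = \mu^{m+1}$ in \eqref{eqn:fullLM_MD1}. Since $a_1=1$ in both the ES- and SP-cases, the term $a_1\lambda^{m+1}(\mu^{m+1},\mu^{m+1})^h_{\widetilde{\mS}^{m+1}}$ together with $(\nabla_\mS\mu^{m+1},\nabla_\mS\mu^{m+1})^h_{\widetilde{\mS}^{m+1}}$ is precisely what \eqref{eqn:fullLM_MD4} is engineered to cancel: substituting \eqref{eqn:fullLM_MD4} turns the left-hand side into $\big(\tfrac{\alpha_k\vec{\chi}^{m+1}-A_k(\vec{\chi}^m)}{\Delta t}\cdot\vec{\widetilde v}^{m+1},\mu^{m+1}\big)^h_{\widetilde{\mS}^{m+1}}$ plus the discrete time-derivative of $W$. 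The key identity I then need is that the ``$\mu^{m+1}$ tested against the discrete normal velocity'' term reproduces $\tfrac{1}{\Delta t}\big(\alpha_k W(\vec X^{m+1}) - A_k(W(\vec X^m))\big)$ up to the $G$-stability remainder; this is exactly the discrete analogue of \eqref{eqn:ener_diss}, and for the MD-BDF$k$ spatial discretization it follows by combining \eqref{eqn:fullLM_MD2} (which identifies $\mu^{m+1}$ with $-\nabla_\mS\cdot(\mat Z_k\nabla_\mS\vec{\mathrm{id}})\vec{\widetilde n}^{m+1}$ in the mass-lumped sense) with the convexity/positive-definiteness of $\mat Z_k(\vec n)$ guaranteed by the stabilizing function $k(\vec n)$. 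Carrying this out for $k=1$ is a one-line telescoping; for $k=2$ one invokes the standard BDF2 $G$-stability inequality $(\tfrac32 a - 2b + \tfrac12 c)\cdot a \ge \tfrac14(|a|^2_G - |b|^2_G) + \text{nonneg.}$, so that the quadratic energy functional $E^{m+1}$ built from $(W(\vec X^{m+1}),W(\vec X^m))$ is monotone. Summing over time steps gives \eqref{ener_LM}. I would note that this is where the restriction $k\le 2$ enters: for $k\ge 3$ no such $G$-stability structure with nonnegative weights exists.

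\textbf{Volume conservation (Case 2 and Case 3, $1\le k\le 4$).} This is the easy half: I would take $\varphi \equiv 1$ in \eqref{eqn:fullLM_MD1}. The Laplace--Beltrami term $(\nabla_\mS\mu^{m+1},\nabla_\mS 1)^h = 0$ and, when $a_1=1$, also $a_1\lambda^{m+1}(\mu^{m+1},1)^h$ must be handled — but combined with \eqref{eqn:fullLM_MD5} and the discrete identity $V(\vec X^{m+1}) - V(\vec X^m)$-type relation (the mass-lumped analogue of $\tfrac{dV}{dt} = ((\partial_t\vec X\circ\vec X^{-1})\cdot\vec n,1)$, which for the MD formulation reads in terms of $\vec{\widetilde v}^{m+1}$ and the triangle areas), testing \eqref{eqn:fullLM_MD1} against $1$ reduces to $\alpha_k V(\vec X^{m+1}) - A_k(V(\vec X^m)) = 0$, i.e.\ exactly \eqref{eqn:fullLM_MD5}. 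Since the BDF$k$ operator annihilates constants ($\alpha_k = \sum$ of the $A_k$-coefficients), a short induction on $m$ using the prepared initial data $V(\vec X^0)=\dots=V(\vec X^{k-1})$ yields $V(\vec X^{m+1})=V(\vec X^0)$ for all $m$, which is \eqref{eqn:vol_LM_equal}.

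\textbf{Main obstacle.} The genuinely delicate point is the discrete chain rule connecting the tested velocity term $\big(\tfrac{\alpha_k\vec\chi^{m+1}-A_k(\vec\chi^m)}{\Delta t}\cdot\vec{\widetilde v}^{m+1},\mu^{m+1}\big)^h$ to the difference quotient of the discrete energy $W(\vec X^{m+1})$, because the inner product lives on the moving, re-parametrized surface $\widetilde{\mS}^{m+1}$ while $\mu^{m+1}$ is defined through the anisotropic matrix $\mat Z_k$ and the averaged normal $\vec{\widetilde v}^{m+1}$ rather than the element normals $\vec{\widetilde n}^{m+1}$ used in $W$. I expect the proof to route through a ``minimal stabilizing function'' estimate as in \cite{bao2023symmetrized1}: the quadratic form $\langle \mat Z_k(\vec n^{m+1})\nabla_\mS\vec\chi^{m+1},\nabla_\mS\vec\chi^{m+1}\rangle^h$ dominates (a multiple of) $W(\vec X^{m+1})$, and the cross term produced by the BDF$k$ difference is controlled by the same $G$-stability identity used for the energy. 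Making the normal-vector mismatch ($\vec{\widetilde v}$ vs.\ $\vec{\widetilde n}$) harmless — it contributes only higher-order-in-$h$ terms that do not spoil the \emph{discrete} inequality because of how \eqref{eqn:fullLM_MD2} is posed with $\vec{\widetilde v}^{m+1}\phi$ as test function — is the step I would treat most carefully.
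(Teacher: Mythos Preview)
Your proposal contains a genuine misunderstanding of the structure of the LM-MD-BDFk scheme. Equations \eqref{eqn:fullLM_MD4} and \eqref{eqn:fullLM_MD5} are not consequences to be \emph{derived} by testing \eqref{eqn:fullLM_MD1}; they are additional equations of the nonlinear system, enforced directly by the extra unknowns $\lambda^{m+1}$ and $\varrho^{m+1}$. The whole point of the Lagrange multiplier approach is precisely that one does \emph{not} need a discrete chain rule linking $\big(\tfrac{\alpha_k\vec\chi^{m+1}-A_k(\vec\chi^m)}{\Delta t}\cdot\vec{\widetilde v}^{m+1},\mu^{m+1}\big)^h_{\widetilde{\mS}^{m+1}}$ to $\alpha_k W(\vec X^{m+1})-A_k(W(\vec X^m))$: that identity \emph{fails} at the fully discrete level (which is exactly why the multiplier is introduced), so the ``main obstacle'' you describe is not an obstacle you can overcome --- it is an obstacle the scheme sidesteps by construction. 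Your route through $\mat Z_k$-convexity and a minimal-stabilizing-function estimate would reproduce a BGN-type energy argument, not the LM one, and it is unnecessary here.

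The paper's proof is essentially trivial once this is clear. For energy, \eqref{eqn:fullLM_MD4} already reads $\alpha_k W(\vec X^{m+1})-A_k(W(\vec X^m))=-\Delta t\,\|\nabla_\mS\mu^{m+1}\|^2\le 0$; for $k=1$ this is immediately $W(\vec X^{m+1})\le W(\vec X^m)$, and for $k=2$ one rewrites $\tfrac32 W^{m+1}-2W^m+\tfrac12 W^{m-1}=\tfrac32(W^{m+1}-W^m)-\tfrac12(W^m-W^{m-1})\le 0$ and inducts on the sign of successive differences --- no $G$-stability quadratic form needed. For volume, \eqref{eqn:fullLM_MD5} is already $\alpha_k V(\vec X^{m+1})=A_k(V(\vec X^m))$, and your observation that the BDF$k$ operator annihilates constants plus induction on the prepared initial data finishes the argument; testing \eqref{eqn:fullLM_MD1} against $\varphi=1$ is superfluous.
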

\begin{proof}
    For the ES-LM-MD-BDF1 method and SP-LM-MD-BDF1 method, according to \eqref{eqn:fullLM_MD4}, we can obtain 
\begin{equation}
         W\left(\vec{X}^{m+1}\right) -W\left(\vec{X}^m\right) = -\Delta t \left(\nabla_\mS \mu^{m+1},\nabla_\mS \mu^{m+1}\right)_{\widetilde{\mS}^{m+1}}^h\le 0, \qquad \forall m\ge 0.
\end{equation}
Therefore, the ES-LM-MD-BDF1 method and SP-LM-MD-BDF1 method satisfy the energy stablity. 

Moreover, for the ES-LM-MD-BDF2 and SP-LM-MD-BDF2 methods, we assume \(W\left(\vec{X}^{2}\right) \leq W\left(\vec{X}^{1}\right)\), as the corresponding BDF1 method is utilized in the first step. Furthermore, we assume \(W\left(\vec{X}^{m}\right) \leq W\left(\vec{X}^{m-1}\right)\) for \(m \geq 1\) and aim to prove that \(W\left(\vec{X}^{m+1}\right) \leq W\left(\vec{X}^{m}\right)\). 
Using \eqref{eqn:fullLM_MD4}, we have 
\begin{align}
    \frac{3}{2}W\left(\vec{X}^{m+1}\right)
    -2W\left(\vec{X}^{m}\right)
    +\frac12W\left(\vec{X}^{m-1}\right)
    =\frac{3}{2}\left[W\left(\vec{X}^{m+1}\right)
    -W\left(\vec{X}^{m}\right)\right]
    -\frac{1}{2}\left[W\left(\vec{X}^{m}\right)
    -W\left(\vec{X}^{m-1}\right)\right]\leq 0,
\end{align}
which implies 
\begin{align}
    \frac{3}{2}\left[W\left(\vec{X}^{m+1}\right)
    -W\left(\vec{X}^{m}\right)\right]
    \leq \frac{1}{2}\left[W\left(\vec{X}^{m}\right)
    -W\left(\vec{X}^{m-1}\right)\right]\leq 0. 
\end{align}
Therefore, by using the mathematical induction,  the ES-LM-MD-BDF2 method and SP-LM-MD-BDF2 method also keep the energy stability. 

Additionally, we observe that \(\alpha_k\) equals the sum of the coefficients of \(A_k\). Therefore, it is straightforward to prove that volume conservation holds for any \(k\).

\end{proof}
\begin{rem}
    The property of energy stability cannot be directly extended to ES-LM-MD-BDFk ($k=3, 4$) and SP-LM-MD-BDFk ($k=3, 4$), because it is not always possible to find $a \in \mathbb{R}^+$ that satisfies the following property:
\begin{equation}
    W\left(\vec{X}^{m+1}\right)-W\left(\vec{X}^m\right)\le a\left(W\left(\vec{X}^{m}\right)-W\left(\vec{X}^{m-1}\right)\right).
\end{equation}
\end{rem}

For the nonlinear system \eqref{eqn:fullLM_MD}, we can apply Newton's iteration method to solve it effectively. In the $n$-th iteration, given  $(\vec{\chi}^{m+1,n},\mu^{m+1,n}, \kappa^{m+1,n},\lambda^{m+1,n},\varrho^{m+1,n})\in [\mathbb{K}(\widetilde{X}^{m+1})]^3 \times \mathbb{K}(\widetilde{X}^{m+1}) \times \mathbb{K}(\widetilde{X}^{m+1}) \times \mathbb{R}  \times \mathbb{R}$, find the Newton direction $(\vec{\chi}^\delta,\mu^\delta, \kappa^\delta,\lambda^\delta,\varrho^\delta) \in [\mathbb{K}(\widetilde{X}^{m+1})]^3 \times \mathbb{K}(\widetilde{X}^{m+1}) \times \mathbb{K}(\widetilde{X}^{m+1}) \times \mathbb{R}  \times \mathbb{R}$, such that 
\begin{subequations}\label{eqn:Newton_iteration}
    \begin{align}
          & \notag \left(\frac{\alpha_k\vec{\chi}^\delta}{\Delta t}\cdot \vec{\widetilde{v}}^{m+1}, \varphi\right)_{\widetilde{\mS}^{m+1}}^h + (\nabla_\mS \mu^\delta,\nabla_\mS \varphi)_{\widetilde{\mS}^{m+1}}^h - \lambda^\delta \left(\mu^{m+1,n},\varphi \right)_{\widetilde{\mS}^{m+1}}^h- \lambda^{m+1,n} \left(\mu^\delta,\varphi \right)_{\widetilde{\mS}^{m+1}}^h -\varrho^\delta \left(1,\varphi \right)_{\widetilde{\mS}^{m+1}}^h = \\   & -\left(\frac{\alpha_k\vec{\chi}^{m+1,n}-A_k(\vec{\chi}^m)}{\Delta t}\cdot \vec{\widetilde{v}}^{m+1}, \varphi\right)_{\widetilde{\mS}^{m+1}}^h - (\nabla_\mS \mu^{m+1,n},\nabla_\mS \varphi)_{\widetilde{\mS}^{m+1}}^h + \lambda^{m+1,n}\left(\mu^{m+1,n},\varphi \right)_{\widetilde{\mS}^{m+1}}^h+\varrho^{m+1,n}\left(1,\varphi \right)_{\widetilde{\mS}^{m+1}}^h,\\ 
         &\left(\mu^\delta, \phi\right)_{\widetilde{\mS}^{m+1}}^h-\left(\mat{Z}_k(\vec{\widetilde{v}}^{m+1})\nabla_{\mS}\vec{\chi}^\delta,\nabla_\mS (\vec{\widetilde{v}}^{m+1}\phi)\right)_{\widetilde{\mS}^{m+1}}^h = -(\mu^{m+1,n}, \phi)_{\widetilde{\mS}^{m+1}}^h+\left(\mat{Z}_k(\vec{\widetilde{v}}^{m+1})\nabla_{\mS}\vec{\chi}^{m+1,n},\nabla_\mS (\vec{\widetilde{v}}^{m+1}\phi)\right)_{\widetilde{\mS}^{m+1}}^h,\\  \notag 
         &\left(\kappa^\delta, \vec{\widetilde{v}}^{m+1}\cdot \vec{\omega}\sqrt{\text{det}\left(\widetilde{J}^{m+1}\right)}\right)_{\widetilde{\mS}^{m+1}}^h+\left(\nabla_{\mS}\vec{\chi}^\delta(\widetilde{J}^{m+1})^{-1},\nabla_{\mS}\vec{\omega}\sqrt{\text{det}\left(\widetilde{J}^{m+1}\right)}\right)_{\widetilde{\mS}^{m+1}}^h \\  &= -\left(\kappa^{m+1,n}, \vec{\widetilde{v}}^{m+1}\cdot \vec{\omega}\sqrt{\text{det}\left(\widetilde{J}^{m+1}\right)}\right)_{\widetilde{\mS}^{m+1}}^h-\left(\nabla_{\mS}\vec{\chi}^{m+1,n}(\widetilde{J}^{m+1})^{-1},\nabla_{\mS}\vec{\omega}\sqrt{\text{det}\left(\widetilde{J}^{m+1}\right)}\right)_{\widetilde{\mS}^{m+1}}^h,\\&  
    a_1\left(\frac{\alpha_k \nabla_\mS \vec{\chi}^\delta}{\Delta t},\mat{Z}_k(\vec{n}^{m+1,n})\nabla_\mS \vec{\chi}^{m+1,n}\right)_{\mS^{m+1,n}}^h + 2a_1\left(\nabla_\mS \mu^\delta,\mu^{m+1,n}\right)_{\widetilde{\mS}^{m+1}}^h  \nn\\
    &=-a_1\frac{\alpha_k W(\vec{X}^{m+1,n})-A_k(W(\vec{X}^m))}{\Delta t}-a_1\left(\nabla_\mS \mu^{m+1,n},\nabla_\mS \mu^{m+1,n}\right)_{\widetilde{\mS}^{m+1}}^h , \label{LM_model_full4}\\ & \label{LM_model_full5}
a_2\alpha_k\left(\vec{\chi}^\delta,\vec{v}^{m+1,n}\right)_{\mS^{m+1,n}}^h= -a_2\left(\alpha_k V\left(\vec{X}^{m+1,n}\right)-A_k\left(V\left(\vec{X}^m\right)\right)\right), \end{align}
\end{subequations}
where $W(\vec{X}^{m+1,n})$ and $V(\vec{X}^{m+1,n})$ respectively denote the energy and volume of the closed surface $\mS^{m+1,n}$. The equation \eqref{LM_model_full4} and \eqref{LM_model_full5} are derived from the variation of the free energy and volume functional, such that 
\begin{subequations}
    \begin{align}
       & \frac{\delta W(\vec{X}(t))}{\delta \vec{X}}( \vec{X}^\delta) = \lim_{\epsilon \to 0}\frac{W(\vec{X}+\epsilon \vec{X}^\delta)-W(\vec{X})}{\epsilon} = -\left(\nabla_\mS \vec{\text{id}}^\delta, \mat{Z}_k(\vec{n}) \nabla_\mS \vec{\text{id}} \right)_{\mS(t)},\\ & \frac{\delta V(\vec{X}(t))}{\delta \vec{X}}( \vec{X}^\delta) = \lim_{\epsilon \to 0}\frac{V(\vec{X}+\epsilon \vec{X}^\delta)-V(\vec{X})}{\epsilon} =(\vec{\chi}^\delta \cdot \vec{n},1)_{\mS(t)}.
    \end{align}
\end{subequations}
Then, once getting $(\vec{\chi}^\delta,\mu^\delta, \kappa^\delta,\lambda^\delta,\varrho^\delta)$, we can update the iteration via 
\begin{subequations}
    \begin{align}
        \notag \vec{X}^{m+1,n+1}&=\left(\vec{\chi}^{m+1,n}+\vec{\chi}^\delta\right)\circ \vec{\widetilde{X}}^{m+1} , \quad \mu^{m+1,n+1}=\mu^{m+1,n}+\mu^\delta, \quad  \kappa^{m+1,n+1}=\kappa^{m+1,n}+\kappa^\delta,
        \\ & \lambda^{m+1,n+1} =\lambda^{m+1,n}+\lambda^\delta, \qquad \varrho^{m+1,n+1}=\varrho^{m+1,n}+\varrho^\delta.
        \notag
    \end{align}
\end{subequations}

For each $m\ge 0$, we choose the initial guess as
\begin{equation} \notag
    \vec{\chi}^{m+1,0}=\vec{\chi}^m, \quad \mu^{m+1,0}=\mu^m,\quad \kappa^{m+1,0}=\kappa^m, \quad \lambda^{m+1,0}=\lambda^m, \quad \varrho^{m+1,0}=\varrho^m,
\end{equation}
and repeat the iteration \eqref{eqn:Newton_iteration} until the following conditions hold 
\begin{equation}
    \max\bigg\{||\vec{\chi}^\delta||_{L^\infty},||\mu^\delta||_{L^\infty},||\kappa^\delta||_{L^\infty},|\lambda^\delta|,|\varrho^\delta|\bigg\} \le \text{tol},
\end{equation}
where $\text{tol}$ is the chosen tolerance. Hence, we can obtain the solution of the nonlinear system $\eqref{eqn:fullLM_MD}$ that
\begin{equation} \notag
    \vec{X}^{m+1}=\vec{X}^{m+1,n+1}, \quad \mu^{m+1}=\mu^{m+1,n+1},\quad \kappa^{m+1}=\kappa^{m+1,n+1}, \quad \lambda^{m+1}=\lambda^{m+1,n+1}, \quad \varrho^{m+1}=\varrho^{m+1,n+1}.
\end{equation}

\begin{rem}
  When simulating the SDF using SP-LM-MD-BDFk methods, it is necessary to determine the condition for reaching equilibrium; otherwise, the Newton iteration of the scheme may fail to converge. In contrast, the VC-LM-MD-BDFk and ES-LM-MD-BDFk methods are more suitable for long-term evolution. 
\end{rem}

\begin{rem}
    We summarize the methods provided in the previous two sections and highlight the current issues:
    \begin{itemize}
        \item For the SAV-MD-BDFk methods, energy stability can be achieved for \(1 \leq k \leq 4\), but none of these methods, except for the VC-SAV-MD-BDF1 method, ensure volume conservation.
        \item For the LM-MD-BDFk methods that include three cases: ES-LM-MD-BDFk methods, VC-LM-MD-BDFk methods and SP-LM-MD-BDFk methods. The SP-LM-MD-BDFk methods can preserve both the volume conservation and energy stability, but will be instable when reaching equilibrium. The ES-LM-MD-BDFk methods and VC-LM-MD-BDFk methods can each preserve only one type of structure. Meanwhile, for the ES-LM-MD-BDFk methods, the energy stability only holds for $k=1$ and $k=2$. 
    \end{itemize}
To address the issues above and fully leverage the advantages of both methods, we introduce the improved LM-SAV-MD-BDFk methods in the next section. These methods can simultaneously achieve approximate volume conservation and energy stability for \(1\leq k\leq 4\), and they also maintain stability when reaching equilibrium.
\end{rem}

\section{The LM-SAV-MD-BDFk methods for anisotropic SDF}\label{sec:LM_SAV_MD}
In this section, we construct a high-order structure-preserving LM-SAV-MD-BDFk $(1\le k \le 4)$ methods by applying SAV approach to VC-LM-MD-BDFk methods. By taking $a_1 = 0$ and $a_2 = 1$ in system \eqref{eqn:model_LM}, and then combining the SAV approach, we can obtain the following new formulation:
\begin{subequations} \label{eqn:SAV_LM_method}
    \begin{align}
        &\left(\partial_t \vec{X}\circ \vec{X}^{-1}\right) \cdot \vec{n} = \Delta_\mS \mu+\varrho(t), \qquad &&\text{on} \quad \mS(t),\\ 
      & \mu = -\nabla_\mS \cdot \left(\mat{Z}_k(\vec{n})\nabla_\mS \vec{\text{id}}\right)\vec{n}, \qquad &&\text{on} \quad \mS(t), \\
     &  \kappa \vec{n} = \Delta_{\mS^0} \vec{X}, \qquad &&\text{on} \quad \mS^0,\\ &
     \frac{dV(t)}{dt}=0, \qquad &&\text{on} \quad \mS(t), \\ &
     \frac{dR(t)}{dt}=-\zeta(t)\left(\nabla_\mS \mu, \nabla_\mS \mu \right)_{\mS(t)},\quad  &&\text{on} \quad \mS(t).
    \end{align}
\end{subequations}

According to the formulation \eqref{eqn:SAV_LM_method}, we can obtain the LM-SAV-MD-BDFk methods by using the BDFk $(1\le k \le 4)$ schemes. The fully discrete scheme is as follows: given \(\vec{\chi}^m, \vec{\chi}^{m-1}, \dots, \vec{\chi}^{m-k+1} \in [\mathbb{K}(\widetilde{\mS}^{m+1})]^3\) and \(R^m\), find the solution \(\left(\vec{\overline{X}}^{m+1}, \vec{\overline{\chi}}^{m+1}, \overline{\mu}^{m+1}, \overline{\kappa}^{m+1},\overline{\varrho}^{m+1}\right) \in [\mathbb{K}(\mS^0)]^3 \times [\mathbb{K}(\widetilde{\mS}^{m+1})]^3 \times \mathbb{K}(\widetilde{\mS}^{m+1}) \times \mathbb{K}(\widetilde{\mS}^{m+1})\)\(\times \mathbb{R}\), \(\zeta^{m+1}\), \(R^{m+1}\), and \(\left(\vec{X}^{m+1}, \mu^{m+1}, \kappa^{m+1},\varrho^{m+1}\right) \in [\mathbb{K}(\mS^0)]^3 \times \mathbb{K}(\widetilde{\mS}^{m+1}) \times \mathbb{K}(\widetilde{\mS}^{m+1})\)\(\times \mathbb{R}\), such that
\begin{subequations}\label{eqn:fullLM_SAV_MD}
  \begin{align}
   &\left(\frac{\alpha_k\vec{\overline{\chi}}^{m+1}-A_k(\vec{\chi}^m)}{\Delta t}\cdot \vec{\widetilde{v}}^{m+1}, \varphi\right)_{\widetilde{\mS}^{m+1}}^h + (\nabla_\mS \overline{\mu}^{m+1},\nabla_\mS \varphi)_{\widetilde{\mS}^{m+1}}^h-\overline{\varrho}^{m+1}\left(1,\varphi \right)_{\widetilde{\mS}^{m+1}}^h = 0, \qquad \forall \varphi \in \mathbb{K}(\widetilde{\mS}^{m+1}),\\ &(\overline{\mu}^{m+1}, \phi)_{\widetilde{\mS}^{m+1}}^h-\left(\mat{Z}_k(\vec{\widetilde{v}}^{m+1})\nabla_{\mS}\vec{\overline{\chi}}^{m+1},\nabla_\mS (\vec{\widetilde{v}}^{m+1}\phi)\right)_{\widetilde{\mS}^{m+1}}^h = 0, \qquad \forall \phi \in \mathbb{K}(\widetilde{\mS}^{m+1}),\\  
     &\left(\overline{\kappa}^{m+1}, \vec{\widetilde{v}}^{m+1}\cdot \vec{\omega}\sqrt{\text{det}\left( \widetilde{\mat J}^{m+1}\right)}\right)_{\widetilde{\mS}^{m+1}}^h+\left(\nabla_{\mS}\vec{\overline{\chi}}^{m+1}\left(\widetilde{J}^{m+1}\right)^{-1},\nabla_{\mS}\vec{\omega}\sqrt{\text{det}\left( \widetilde{\mat J}^{m+1}\right)}\right)_{\widetilde{\mS}^{m+1}}^h = 0, \qquad \forall \vec{\omega}\in [\mathbb{K}(\widetilde{\mS}^{m+1})]^3,\\&  \label{eqn:fullLM_SAV4_MD}
    \alpha_k V(\vec{\overline{X}}^{m+1}) -A_k(V(\vec{X}^m)) = 0,\\ &
    \vec{\overline{X}}^{m+1} = \vec{\overline{\chi}}^{m+1}\circ \vec{\widetilde{X}}^{m+1},
    \\ & \label{eqn:fullLM_SAV5_MD}
    \frac{R^{m+1}-R^m}{\Delta t} = -\zeta^{m+1}(\nabla_\mS \overline{\mu}^{m+1},\nabla_\mS \overline{\mu}^{m+1})_{\overline{\mS}^{m+1}}^h \quad \text{with} \quad \zeta^{m+1}=\frac{R^{m+1}}{W_c(\vec{\overline{X}}^{m+1})}, \\& \label{eqn:fullLM_SAV6_MD}
    \eta^{m+1} = 1-(1-\zeta^{m+1})^r, \\& \label{eqn:fullLM_SAV7_MD}
    \vec{X}^{m+1} = \eta^{m+1}\vec{\overline{X}}^{m+1}, \quad \mu^{m+1}=\eta^{m+1}\overline{\mu}^{m+1}, \quad \kappa^{m+1}=\eta^{m+1}\overline{\kappa}^{m+1}, \quad   \varrho^{m+1}=\eta^{m+1}\overline{\varrho}^{m+1}.
  \end{align}    
\end{subequations}

 Observing from \eqref{eqn:fullLM_SAV5_MD} and \eqref{eqn:fullLM_SAV6_MD}, it is evident that LM-SAV-MD-BDFk methods also satisfies the properties of SAV-MD-BDFk methods, namely the unconditional stability of the modified energy $R^m$ and the boundedness of the original energy $W(\vec{X}^m)$. In addition, it is approximately volume-conservative. 
\begin{thm} \label{thm:LM-SAV-MD-BDFk}
Let $\left(\vec{X}^{m+1},\mu^{m+1},\kappa^{m+1},\varrho^{m+1}\right)$ be the numerical solution of the LM-SAV-MD-BDFk methods, the following properties hold
\begin{itemize}
    \item[(i)]  The energy is unconditionally stable, i.e.,
\begin{equation}\label{ener_SAV_LM}
       R^{m+1}\leq R^m, \qquad \forall m \ge 0,
    \end{equation}
    and the original energy is bounded, i.e., 
    \begin{align}
        W(\vec X^m)\leq M_r^2,\qquad \forall m\geq 0. 
    \end{align}
    \item[(ii)] The volume is approximately conservative, i.e., 
    \begin{equation}\label{volume_SAV_LM}
    V\left(\vec{X}^{m+1}\right)-V\left(\vec{X}^m\right) = O(\Delta t^r), \qquad \forall m \ge 0.
\end{equation}
\end{itemize}
\end{thm}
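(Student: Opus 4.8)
The plan is to obtain the statement by recombining the two arguments already established for the SAV-MD-BDFk and VC-SAV-MD-BDF1 methods, namely Theorems~\ref{thm:ener_stable} and~\ref{thm:VC-SAV-MD-BDF1}, after observing that the Lagrange multiplier $\overline{\varrho}^{m+1}$, although it couples the linear system solved for $(\vec{\overline{\chi}}^{m+1},\overline{\mu}^{m+1},\overline{\kappa}^{m+1},\overline{\varrho}^{m+1})$, does not appear in any of the relations that drive the two conclusions. Concretely, part~(i) uses only \eqref{eqn:fullLM_SAV5_MD}--\eqref{eqn:fullLM_SAV7_MD}, while part~(ii) uses only \eqref{eqn:fullLM_SAV4_MD} together with \eqref{eqn:fullLM_SAV6_MD}--\eqref{eqn:fullLM_SAV7_MD}; none of these contains $\overline{\varrho}^{m+1}$, so the coupling is harmless for both.

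For part~(i), I would first note that \eqref{eqn:fullLM_SAV5_MD} is algebraically identical to \eqref{eqn:SAV_4}, so solving it for $R^{m+1}$ gives the closed-form expression of the type \eqref{eqn:R}; since $R^m\ge0$ by hypothesis this yields $R^{m+1}\ge0$, and via the representation \eqref{eqn:zeta} of $\zeta^{m+1}$ also $\zeta^{m+1}\ge0$, whence \eqref{eqn:fullLM_SAV5_MD} gives $R^{m+1}-R^m=-\Delta t\,\zeta^{m+1}(\nabla_\mS\overline{\mu}^{m+1},\nabla_\mS\overline{\mu}^{m+1})_{\overline{\mS}^{m+1}}^h\le0$, i.e. \eqref{ener_SAV_LM}. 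The boundedness of $W(\vec{X}^m)$ is then copied from the proof of Theorem~\ref{thm:ener_stable}(iii): from $R^{m+1}\le\cdots\le R^0=:M$ and \eqref{eqn:zeta} one gets $0\le\zeta^{m+1}\le M/(W(\vec{\overline{X}}^{m+1})+1)$ (assuming $C\ge1$), then the polynomial bound $|\eta^{m+1}|\le M_r/(W(\vec{\overline{X}}^{m+1})+1)$ as in \eqref{eqn:bound_eta}, and finally the rescaling identities \eqref{eqn:area_relate}--\eqref{eqn:n_relate} — which depend on \eqref{eqn:fullLM_SAV7_MD} alone and therefore hold verbatim here — give $W(\vec{X}^{m+1})=(\eta^{m+1})^2W(\vec{\overline{X}}^{m+1})\le M_r^2$.

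For part~(ii), the volume constraint \eqref{eqn:fullLM_SAV4_MD} reads $\alpha_k V(\vec{\overline{X}}^{m+1})=A_k(V(\vec{X}^m))$. Combined with the volume-scaling identity $V(\vec{X}^{m+1})=(\eta^{m+1})^3V(\vec{\overline{X}}^{m+1})$ — the analogue of \eqref{eqn:vol_relate}, again a consequence of \eqref{eqn:fullLM_SAV7_MD} only — this gives $V(\vec{X}^{m+1})=\alpha_k^{-1}(\eta^{m+1})^3A_k(V(\vec{X}^m))$. I would then close the estimate by induction on $m$: using $\eta^0=1$ and $\eta^m=1+O(\Delta t^r)$ for $m>0$ (from \eqref{eqn:fullLM_SAV6_MD} together with the first-order consistency of $R^m$ recalled in Remark~\ref{rem:zeta_mean}), with startup values produced by the lower-order method, one assumes $V(\vec{X}^j)=V(\vec{X}^0)+O(\Delta t^r)$ for all $j\le m$; since $\alpha_k$ equals the sum of the coefficients in $A_k$, one has $A_k(V(\vec{X}^m))=\alpha_k V(\vec{X}^0)+O(\Delta t^r)$ and $(\eta^{m+1})^3=1+O(\Delta t^r)$, hence $V(\vec{X}^{m+1})=V(\vec{X}^0)+O(\Delta t^r)$, which closes the induction and yields $V(\vec{X}^{m+1})-V(\vec{X}^m)=O(\Delta t^r)$. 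Alternatively one can reproduce the limit computation \eqref{eqn:iden} directly.

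I do not anticipate a substantial obstacle: the proof is essentially an assembly of Theorems~\ref{thm:ener_stable} and~\ref{thm:VC-SAV-MD-BDF1}. The only points needing care are (a) confirming that the coupling introduced by $\overline{\varrho}^{m+1}$ is irrelevant because the relations actually used do not contain it, and (b) checking that the constant $M_r$ in the energy bound and the implied constant in the $O(\Delta t^r)$ volume estimate are uniform in $m$, which follows respectively from the monotone decay $R^{m+1}\le R^m$ and from the uniform first-order consistency of the scalar auxiliary variable across time steps.
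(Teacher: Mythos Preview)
Your proposal is correct and matches the paper's proof in structure: part~(i) is obtained verbatim from Theorem~\ref{thm:ener_stable} since \eqref{eqn:fullLM_SAV5_MD}--\eqref{eqn:fullLM_SAV7_MD} are identical to the corresponding SAV relations, and part~(ii) is proved by induction combining the constraint \eqref{eqn:fullLM_SAV4_MD} with the scaling $V(\vec{X}^{m+1})=(\eta^{m+1})^3V(\vec{\overline{X}}^{m+1})$. The only cosmetic difference is that the paper carries out the induction on the consecutive differences $V(\vec{X}^{m+1})-V(\vec{X}^m)$ (illustrated for $k=2$, where \eqref{eqn:fullLM_SAV4_MD} gives the contractive recursion $V(\vec{\overline{X}}^{m+1})-V(\vec{X}^m)=\tfrac{1}{3}(V(\vec{X}^m)-V(\vec{X}^{m-1}))$), whereas you track the cumulative deviation $V(\vec{X}^j)-V(\vec{X}^0)$ and invoke $\alpha_k=\text{sum of coefficients of }A_k$; the two formulations are equivalent and the paper's contractive form makes the uniformity in $m$ you flag in point~(b) slightly more transparent.
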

\begin{proof}
    The energy stability and boundedness can be demonstrated similar to Theorem \ref{thm:ener_stable}.
    In addition, we use the mathematical induction to prove the approximate volume conservation property. For simplicity, we consider the case of $k=2$, as the proofs for other cases are analogous.
We assume that \(\vec{X}^1\) is calculated using the LM-SAV-MD-BDF1 method, which leads to the following relationship:
\begin{align}
    V\left(\vec{\overline{X}}^{1}\right)-V\left(\vec{X}^0\right)=0.
\end{align}
Then, based on \eqref{eqn:vol_relate}, we can derive
\begin{equation}\label{eqn:LM_vol1_rel}
    V\left(\vec{X}^1\right)-V\left(\vec{X}^0\right)=\left[\left(\eta^1\right)^3-1\right]V\left(\vec{\overline{X}}^1\right) + V\left(\vec{\overline{X}}^{1}\right)-V\left(\vec{X}^0\right) = \left[\left(\eta^1\right)^3-1\right]V\left(\vec{X}^0\right).
\end{equation}
Furthermore, since $\eta^1 = 1+ O(\Delta t ^r)$, and by using \eqref{eqn:LM_vol1_rel},  we can obtain 
\begin{equation}\label{eqn:lim_1}
    \lim_{\Delta t \to 0} \frac{V\left(\vec{X}^1\right)-V\left(\vec{X}^0\right)}{\Delta t^r} = \lim_{\Delta t \to 0}\frac{\left[\left(\eta^1\right)^3-1\right]V\left(\vec{X}^0\right)}{\Delta t ^r}= O(1),
\end{equation}
which implies that \eqref{volume_SAV_LM} holds when $m = 0$. For the LM-SAV-MD-BDF2 method, when $m=1$, we can derive from \eqref{eqn:fullLM_SAV4_MD} that
\begin{align}
  \notag
    V\left(\vec{X}^{2}\right)-V\left(\vec{X}^{1}\right) &= V\left(\vec{\overline{X}}^{2}\right)-V\left(\vec{X}^{1}\right)+\left[\left(\eta^2\right)^3-1\right]V\left(\vec{\overline{X}}^{2}\right)\\ & \notag =\frac{1}{3}\left(V\left(\vec{X}^{1}\right)-V\left(\vec{X}^{0}\right)\right) + 
    \left[\left(\eta^2\right)^3-1\right] \left(V\left(\vec{X}^{1}\right) +\frac{1}{3}\left(V\left(\vec{X}^{1}\right)-V\left(\vec{X}^{0}\right)\right) \right) \\ &
    = \frac{1}{3}\left[\left(\eta^1\right)^3-1\right]V\left(\vec{X}^0\right) + \left[\left(\eta^2\right)^3-1\right]\left(\left(\eta^1\right)^3V\left(\vec{X}^{0}\right) +\frac{1}{3}\left[\left(\eta^1\right)^3-1\right]V\left(\vec{X}^0\right) \right). 
\end{align}
Therefore, similar to \eqref{eqn:lim_1}, \eqref{volume_SAV_LM} holds for $m = 1$. 
Assume that \eqref{volume_SAV_LM} holds for $1\le m \le k-1$, $k\ge 2$. By the mathematical induction, we need to prove that it also holds for $m = k$. Due to \eqref{eqn:fullLM_SAV4_MD}, the following equation for LM-SAV-MD-BDF2 method holds 
\begin{equation}\label{eqn:vol_identity}
    V\left(\vec{\overline{X}}^{m+1}\right)-
    V\left(\vec{X}^{m}\right)
    =\frac{1}{3}\left(V\left(\vec{X}^{m}\right)-V\left(\vec{X}^{m-1}\right)\right), \qquad \forall m \ge 1.
\end{equation}
According to \eqref{eqn:vol_identity} and \eqref{volume_SAV_LM}, we can infer that
\begin{align}\label{eqn:vol_k+1}
\notag
    &V\left(\vec{X}^{k+1}\right)-V\left(\vec{X}^{k}\right)= V\left(\vec{\overline{X}}^{k+1}\right)-V\left(\vec{X}^{k}\right)+\left[\left(\eta^{k+1}\right)^3-1\right] V\left(\vec{\overline{X}}^{k+1}\right)\\
    & \notag =\frac{1}{3}\left(V\left(\vec{X}^{k}\right)-V\left(\vec{X}^{k-1}\right)\right) + \left[\left(\eta^{k+1}\right)^3-1\right]\left(\frac{1}{3}\left(V\left(\vec{X}^{k}\right)-V\left(\vec{X}^{k-1}\right)\right)+V\left(\vec{X}^{k}\right)\right)\\ 
    &=
    \frac{1}{3}\left(V\left(\vec{X}^{k}\right)-V\left(\vec{X}^{k-1}\right)\right) + \left[\left(\eta^{k+1}\right)^3-1\right]\left(\frac{1}{3}\left(V\left(\vec{X}^{k}\right)-V\left(\vec{X}^{k-1}\right)\right)+\sum_{i=1}^k\left(V\left(\vec{X}^{i}\right)-V\left(\vec{X}^{i-1}\right)\right)+V\left(\vec{X}^{0}\right)\right).
\end{align}
Hence, according to \eqref{eqn:vol_k+1}, it can be deduced that $V\left(\vec{X}^{k+1}\right)-V\left(\vec{X}^k\right) = O(\Delta t^r)$. By the mathematical induction, we conclude that  \eqref{volume_SAV_LM} holds for the LM-SAV-MD-BDF2 method. 
\end{proof}

\begin{rem}
For $1 \leq k \leq 4$, the LM-SAV-MD-BDFk methods exhibit approximate volume conservation and energy stability, and are able to address the instability issues encountered by the LM-MD-BDFk methods when evolving towards equilibrium. Although the LM-SAV-MD-BDFk methods show approximate volume conservation, according to Theorem \ref{thm:LM-SAV-MD-BDFk}, we can reduce the volume error by decreasing the time step $\Delta t$ or increasing the parameter $r$, with the latter not introducing any additional computational cost. When we choose a relatively large value for $r$, the volume error becomes comparable to that of the VC-LM-MD-BDFk methods, which are strictly volume-conservative.
\end{rem}

\section{Numerical experiments}\label{sec:numerical_tests}
In this section, we demonstrate the superiority of the proposed schemes from multiple perspectives, including convergence tests, structure-preserving tests, mesh quality comparisons, and surface evolution in surface diffusion with various anisotropic energies. 
The surface energy density functions considered in the experiments are classified into the following three categories:

\begin{itemize}
    \item The ellipsoidal anisotropic surface energy:
    \begin{equation}\label{eqn:Ellipse_fun} \gamma(\vec{n})=\sqrt{{a_1^2n_1^2+a_2^2n_2^2+a_3^2n_3^2}};
    \end{equation}
    \item The 3-fold anisotropic surface energy:
    \begin{equation}\label{eqn:3fold_fun} \gamma(\vec{n})=1+\beta \left(n_1^3+n_2^3+n_3^3\right);
    \end{equation}
    \item The 4-fold anisotropic surface energy:
    \begin{equation}\label{eqn:4fold_fun} \gamma(\vec{n})=1+\beta \left(n_1^4+n_2^4+n_3^4\right). 
    \end{equation}
\end{itemize}

\textbf{Example 1} (Convergence tests) 
In order to test the convergence of the MD-BDFk methods, we calculate a right-hand term $f$ to make the evolving surface a sphere with radius $r(t):=(1+t^3)^{\frac{1}{4}}$. By computing $(f^m,\varphi)_{\widetilde{\mS}^{m+1}}^h$ through \eqref{eqn:mass_lumped}, we add it to the right-hand side of the full discrete scheme \eqref{eqn:MDBDFk_scheme}. 
To test the temporal error, we triangulate the initial surface into 6067 vertices and 12130 elements, making the mesh size sufficiently fine. Subsequently, we use different time steps $\Delta t$ and calculate the error of the numerical solution at time $T$ as follows
\begin{equation}
    e_{\Delta t}(T) := \max_{k=1,2,\dots,K}\left|\left|\vec{q}_k^{T/\Delta t}\right|-r(T)\right|.
\end{equation}
The errors of the MD-BDF1 method, MD-BDF2 method and MD-BDF3 method are depicted in Figure \ref{fig:order_test}. It is evident that with different surface energy densities, the convergence orders of these two schemes align with our desired results.
\begin{figure}[!h]
         \centering
    \includegraphics[width=0.30\textwidth]{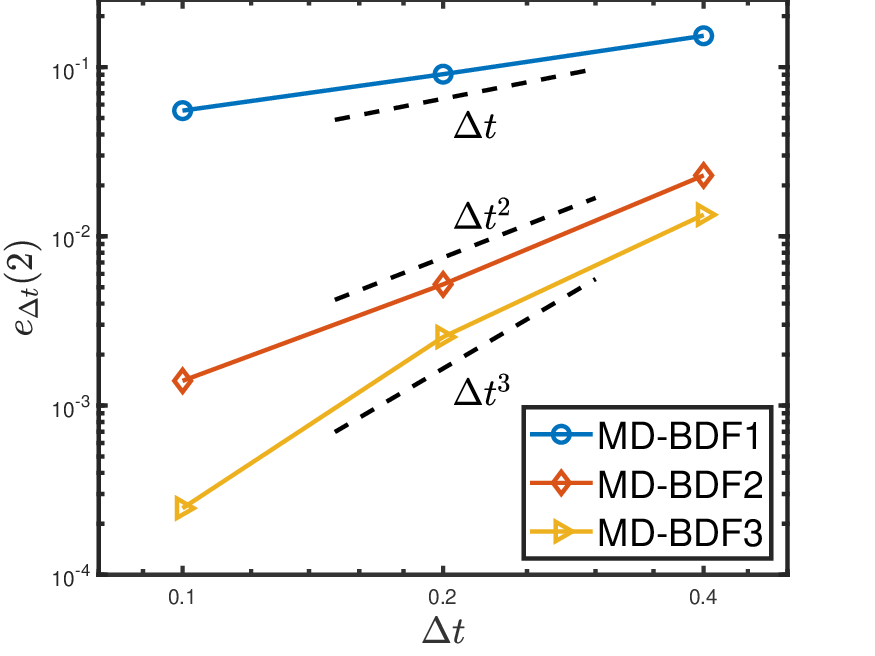}
    \includegraphics[width=0.30\textwidth]{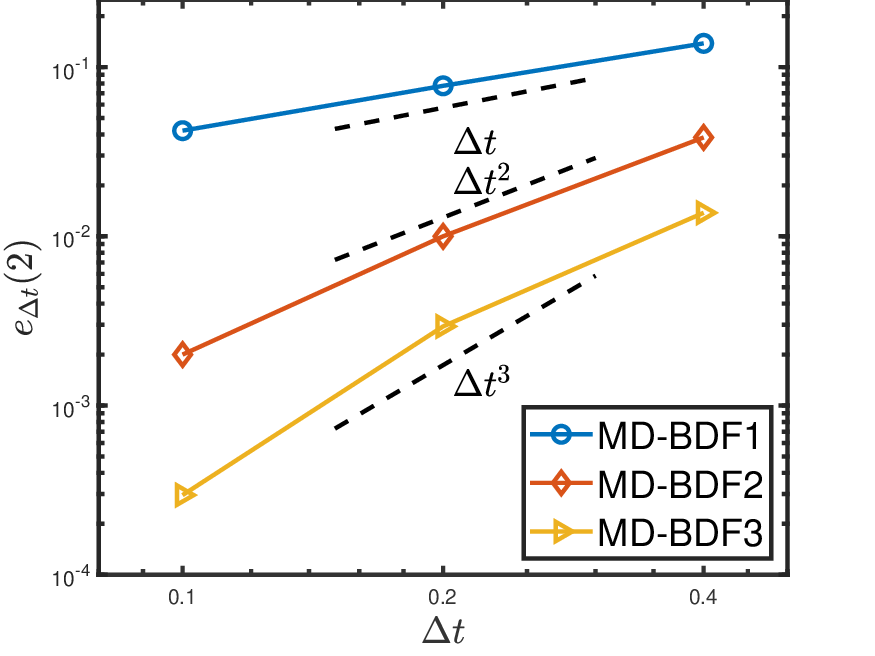}
    \includegraphics[width=0.30\textwidth]{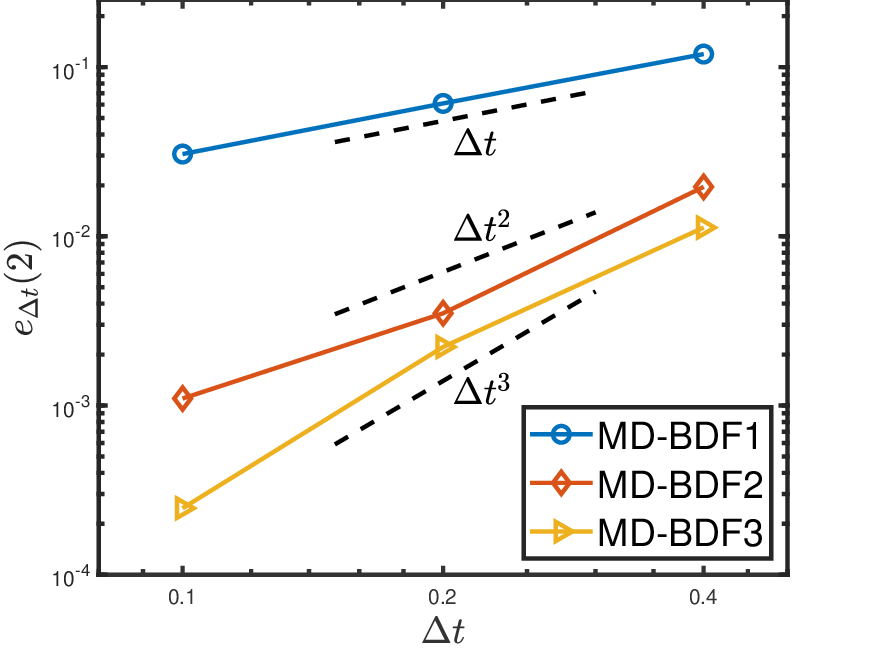}
    \caption{Convergence rates of MD-BDF1 method, MD-BDF2 method and MD-BDF3 method at times $T = 2$ with different surface energy densities: $\gamma(\vec{n})=\sqrt{{n_1^2+n_2^2+2n_3^2}}$, $\gamma(\vec{n})=1+0.125 \left(n_1^3+n_2^3+n_3^3\right)$ and $\gamma(\vec{n})=1+0.05 \left(n_1^4+n_2^4+n_3^4\right)$.}
    \label{fig:order_test}
    \end{figure}

\textbf{Example 2} (Volume conservation) 
In this experiment, we use the relative volume loss to measure the volume-conserving characteristics of the proposed methods. The initial surface is chosen as a $1 \times 1 \times 2$ ellipsoid with a mesh size $h = 1.5356\times 10^{-1}$. We define the relative volume loss $\Delta V(t)$ as
\begin{equation}\label{eqn:relate_loss}
    \Delta V(t)|_{t = t_m}:= \frac{V\left(\vec{X}^m\right)-V\left(\vec{X}^0\right)}{V\left(\vec{X}^0\right)}, \qquad \forall m \ge 0.
\end{equation}
Figure \ref{fig:vol_MDtest} illustrates the relative volume loss for the MD-BDFk methods and the VC-MD-BDF1 method. It can be observed that the VC-MD-BDF1 method exhibits a smaller volume loss throughout the evolution process, nearly achieving volume conservation. 
Figure \ref{fig:vol_SAVMDtest} shows the relative volume loss during the evolution process for the SAV-MD-BDFk methods and the VC-SAV-MD-BDF1 method.
We can observe that as \( r \) increases, the volume preservation capability of the VC-SAV-MD-BDF1 method during the evolution process is significantly improved.
Figures \ref{fig:vol_VCMDtest} and \ref{fig:vol_LMSAVMDtest} demonstrate the volume-preserving property of VC-LM-MD-BDFk methods and LM-SAV-MD-BDFk methods for $1\le k \le 4$. 
Figure \ref{fig:vol_LMSAVMDtest} demonstrates that when \( r \) is sufficiently large, the volume error of the LM-SAV-MD-BDFk methods is on the same order of magnitude as that of the VC-LM-MD-BDFk methods. Therefore, when \( r \) becomes large enough, this method effectively becomes a volume-preserving scheme, and increasing \( r \) does not introduce any additional computational cost.

\begin{figure}[!h]
         \centering
    \includegraphics[width=0.45\textwidth]{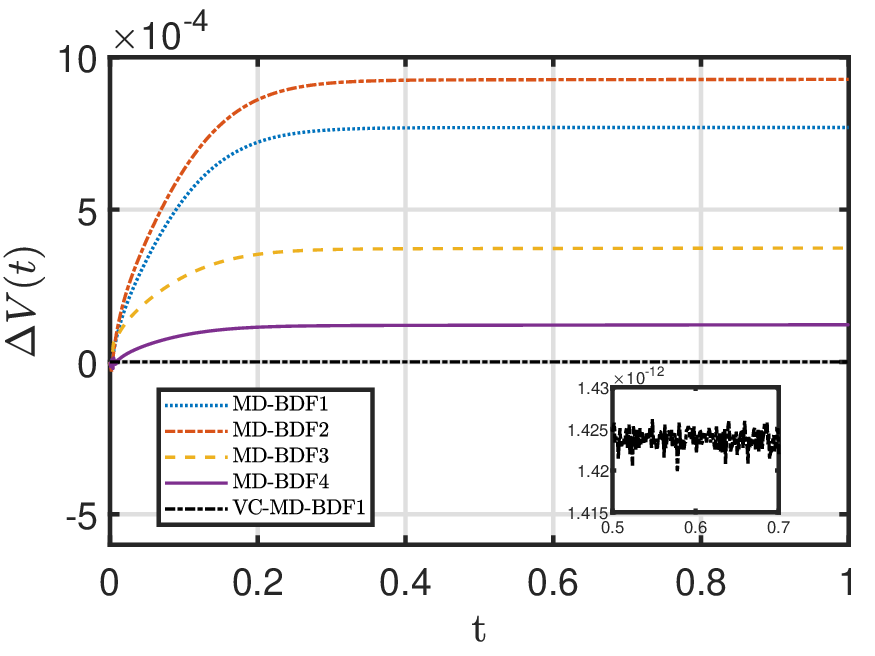}
    \includegraphics[width=0.45\textwidth]{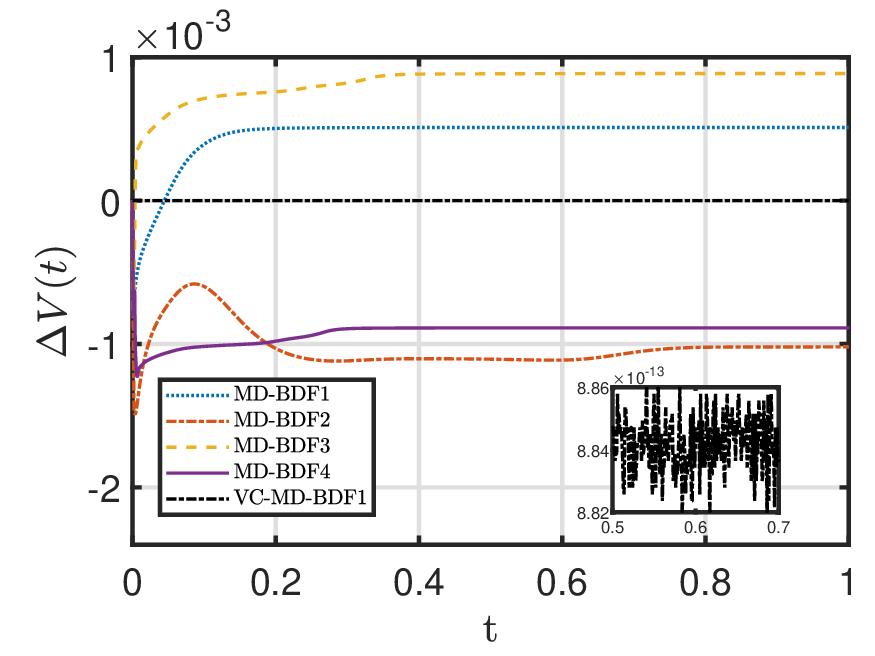}

    \caption{The relative volume loss of MD-BDFk methods under different surface energy densities: $\gamma(\vec{n})=1+0.05 \left(n_1^4+n_2^4+n_3^4\right)$ and $\gamma(\vec{n})=1+0.5 \left(n_1^4+n_2^4+n_3^4\right)$. Other parameters are chosen as $h = 1.5356\times 10^{-1}$ and $\Delta t = 10^{-3}$. }
    \label{fig:vol_MDtest}
    \end{figure}
\begin{figure}[!h]
         \centering
    \includegraphics[width=0.45\textwidth]{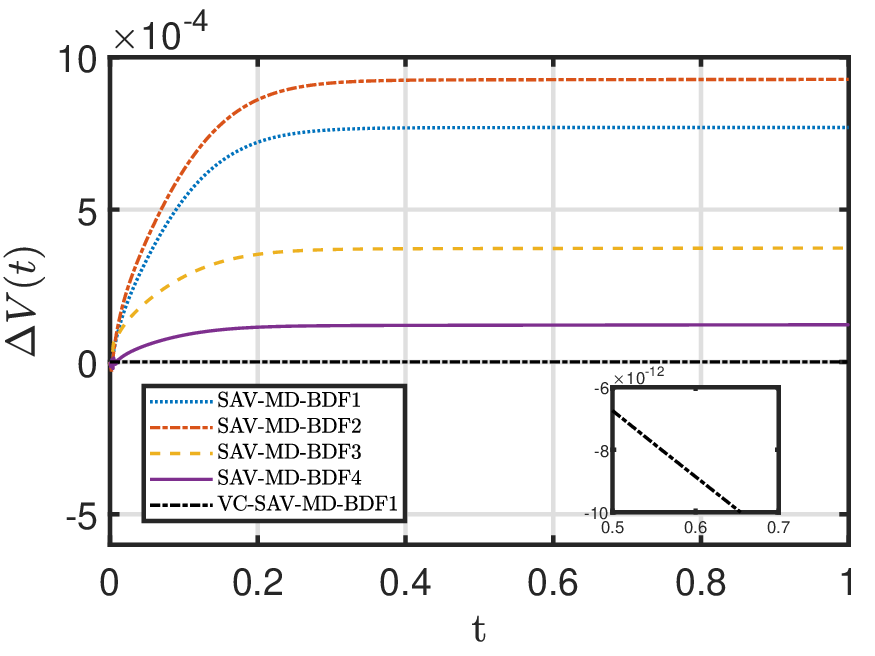}
    \includegraphics[width=0.45\textwidth]{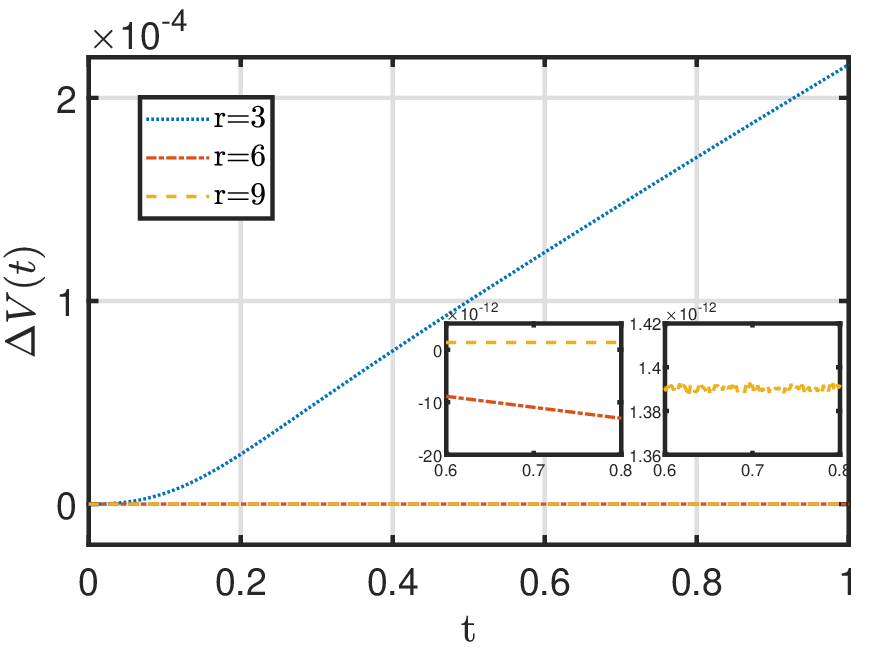}
    \includegraphics[width=0.45\textwidth]{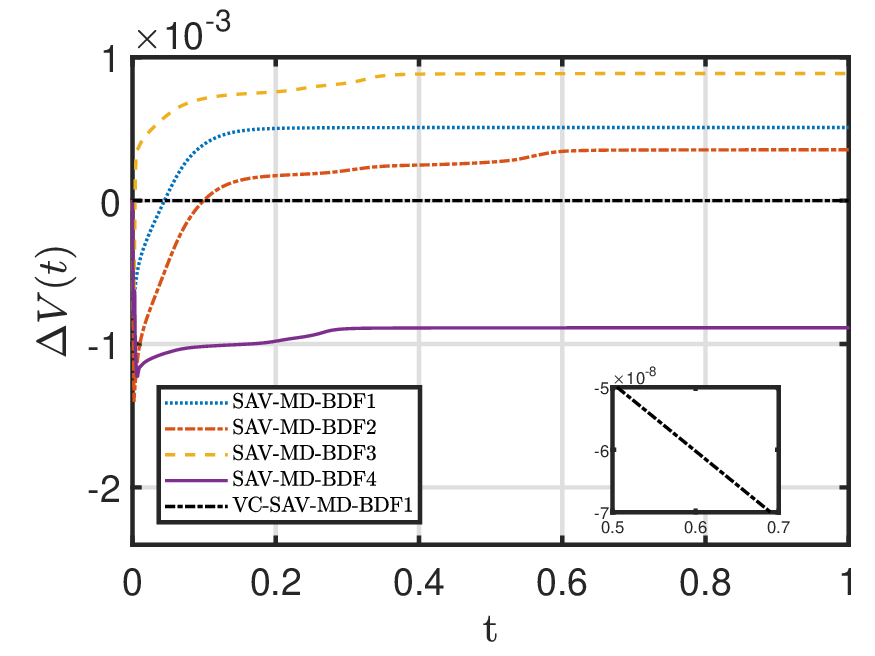}
    \includegraphics[width=0.45\textwidth]{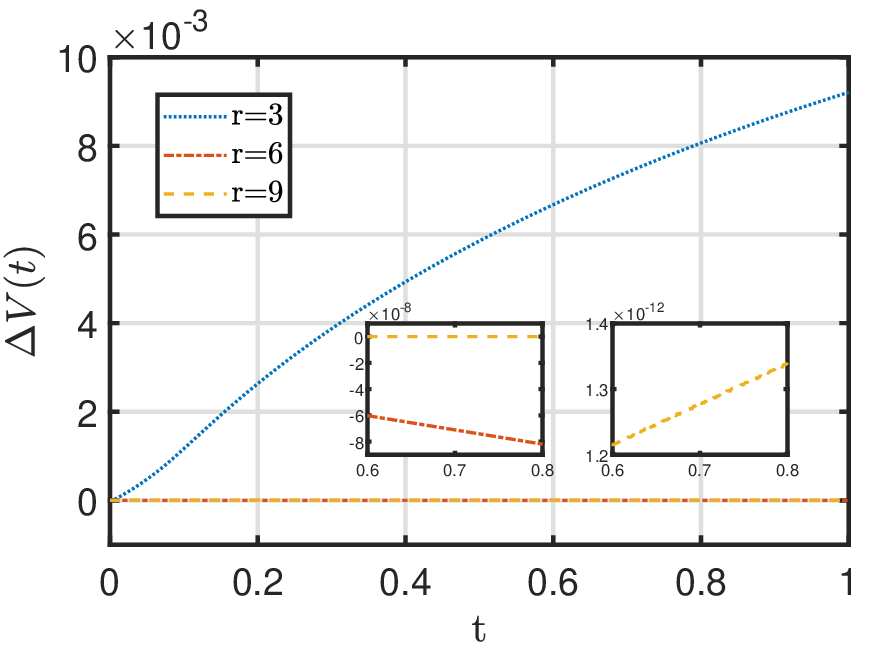}
    
    \caption{The relative volume loss of SAV-MD-BDFk methods under different surface energy densities: $\gamma(\vec{n})=1+0.05 \left(n_1^4+n_2^4+n_3^4\right)$ (the first row) and $\gamma(\vec{n})=1+0.5 \left(n_1^4+n_2^4+n_3^4\right)$ (the second row). The first column: $r=5$. Other parameters are chosen as $h = 1.5356\times 10^{-1}$ and $\Delta t = 10^{-3}$.}
    \label{fig:vol_SAVMDtest}
\end{figure}

 \begin{figure}[!h]
         \centering
    \includegraphics[width=0.45\textwidth]{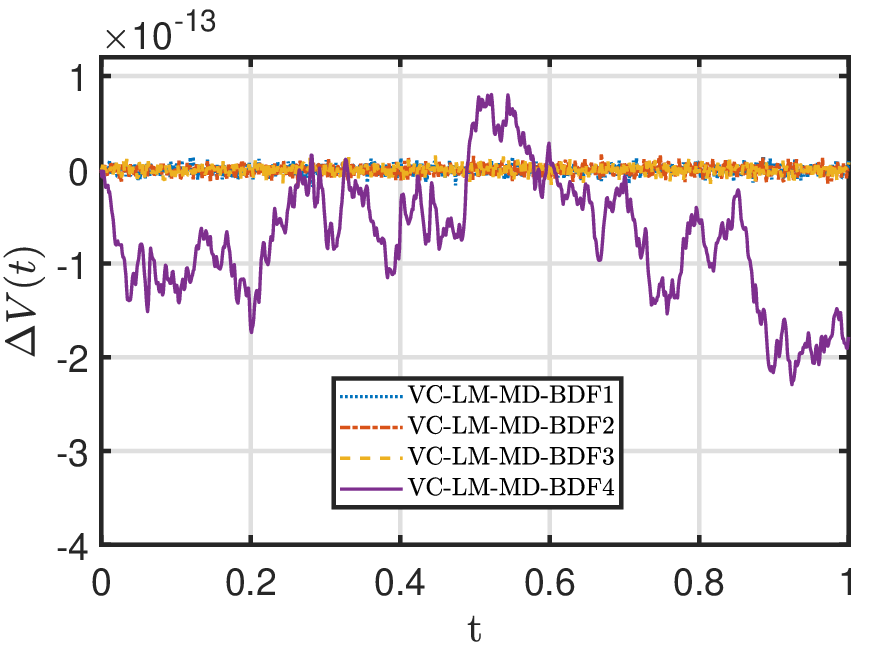}
    \includegraphics[width=0.45\textwidth]{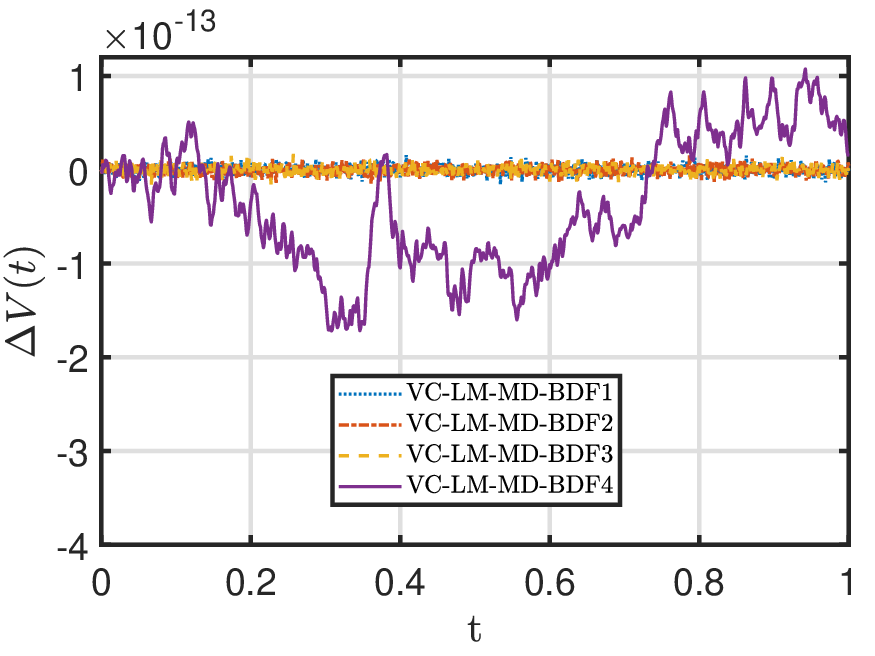}

    \caption{The relative volume loss of VC-LM-MD-BDFk methods under different surface energy densities: $\gamma(\vec{n})=1+0.05 \left(n_1^4+n_2^4+n_3^4\right)$ and $\gamma(\vec{n})=1+0.5 \left(n_1^4+n_2^4+n_3^4\right)$. Other parameters are chosen as $h = 1.5356\times 10^{-1}$ and $\Delta t = 10^{-3}$. }
    \label{fig:vol_VCMDtest}
    \end{figure}   

\begin{figure}[!h]
         \centering
    \includegraphics[width=0.45\textwidth]{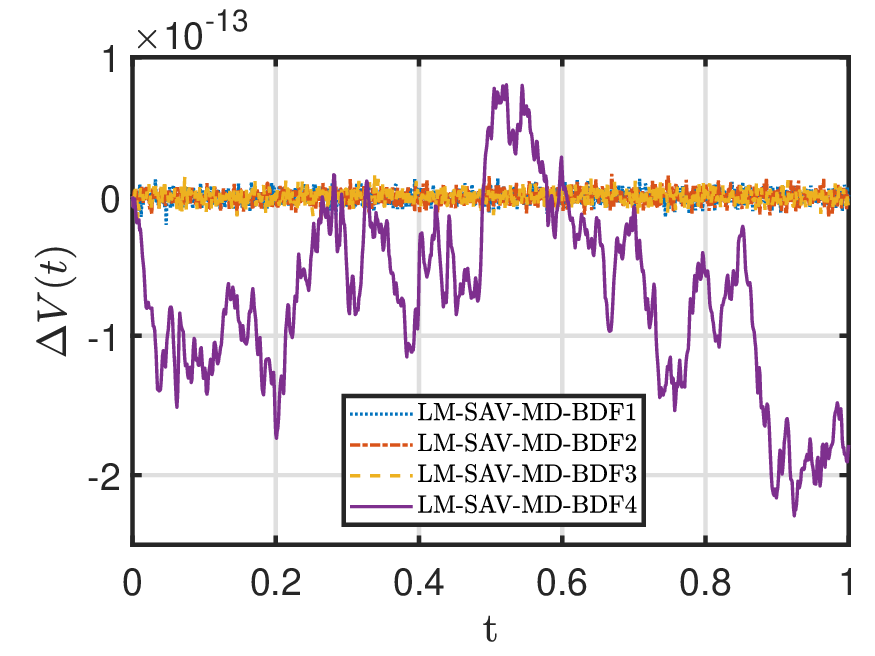}
    \includegraphics[width=0.45\textwidth]{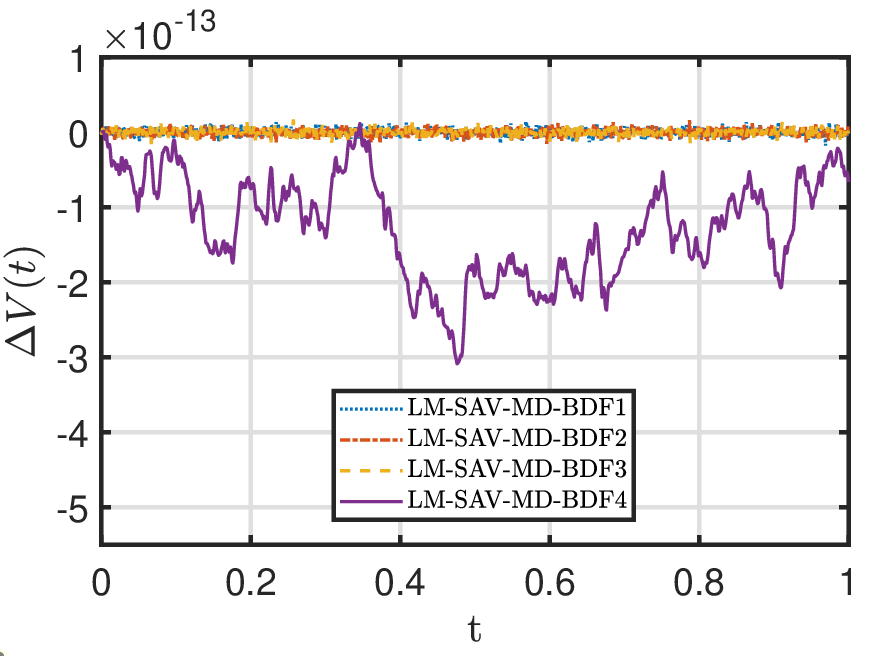}

    \caption{The relative volume loss of LM-SAV-MD-BDFk methods under different surface energy densities: $\gamma(\vec{n})=1+0.05 \left(n_1^4+n_2^4+n_3^4\right)$ and $\gamma(\vec{n})=1+0.5 \left(n_1^4+n_2^4+n_3^4\right)$. Other parameters are chosen as $h = 1.5356\times 10^{-1}$, $\Delta t = 10^{-3}$ and $r = 9$. }
    \label{fig:vol_LMSAVMDtest}
\end{figure}
    
\textbf{Example 3} (Energy stability) In this example, we continue to use the initial surface as in Example 2.  To verify the energy stability of the proposed methods, we use the normalized energy $W^h(t)/W^h(0)$, where $W^h(t)$ is defined as
\begin{equation}
    W^h(t)|_{t=t_m}:= W\left(\vec{X}^{m}\right),\qquad \forall m \ge 0.
\end{equation}
In addition, to verify the effectiveness of the SAV approach, we define the modified energy $R(t)$ and the auxiliary scalar $\zeta(t)$ as
\begin{equation}
    R(t)|_{t = t_m}:=R^m \qquad \text{and} \qquad \zeta(t)|_{t=t_m}:=\zeta^m, \qquad \forall m\ge 0.
\end{equation}
In this example, we conduct the following related experiments:
\begin{itemize}
     \item  Figure \ref{fig:MD_ener_test} shows the evolution of the normalized energy of MD-BDFk methods over time. 
It can be observed that, although the MD-BDFk methods do not theoretically possess the property of energy stability, their energy still dissipates over time. 
Moreover, to verify the energy stability property of ES-LM-MD-BDFk methods for the BDFk $(k=1,2)$ method, we plot the normalized energy over time in Figure \ref{fig:ESMDener_test}. It is evident that the ES-LM-MD-BDFk $(k=1,2)$ methods possess the property of energy stability.
\item For the SAV-MD-BDFk and LM-SAV-MD-BDFk methods, we plot the evolution of the normalized energy, normalized modified energy, and auxiliary scalar over time in Figures \ref{fig:enerSAVMD_test} and \ref{fig:LMSAVMDener_test}, respectively. 
Although the theoretical proof can only be provided for the stability of the modified energy, our numerical experiments demonstrate that both the original energy $W^h(t)$ and the modified energy $R(t)$ are dissipative over time. 
In addition, the auxiliary scalar $\zeta(t)$ remains very close to $1$ during the evolution process, which indicates that the modified energy $R(t)$ and the original energy $W^h(t)$ are very close to each other during the evolution.
\end{itemize}
\begin{figure}[!h]
         \centering
    \includegraphics[width=0.45\textwidth]{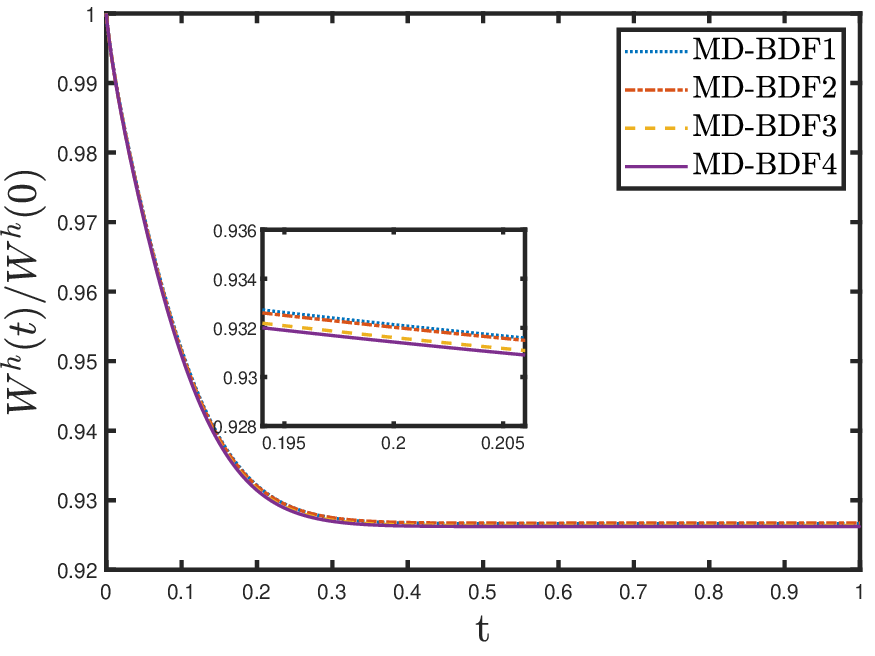}
    \includegraphics[width=0.45\textwidth]{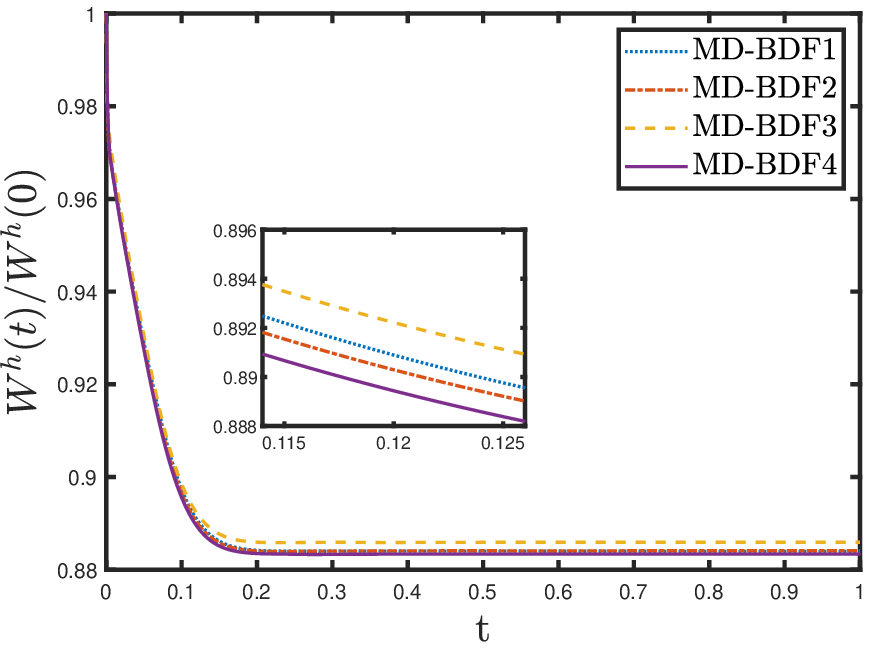}
    \caption{The normalized energy for MD-BDFk methods with different surface energy densities: $\gamma(\vec{n})=1+0.05 \left(n_1^4+n_2^4+n_3^4\right)$ and $\gamma(\vec{n})=1+0.5 \left(n_1^4+n_2^4+n_3^4\right)$. Other parameters are chosen as $h = 1.5356\times 10^{-1}$ and $\Delta t = 10^{-3}$.}
    \label{fig:MD_ener_test}
    \end{figure}
    
\begin{figure}[!h]
         \centering
    \includegraphics[width=0.30\textwidth]{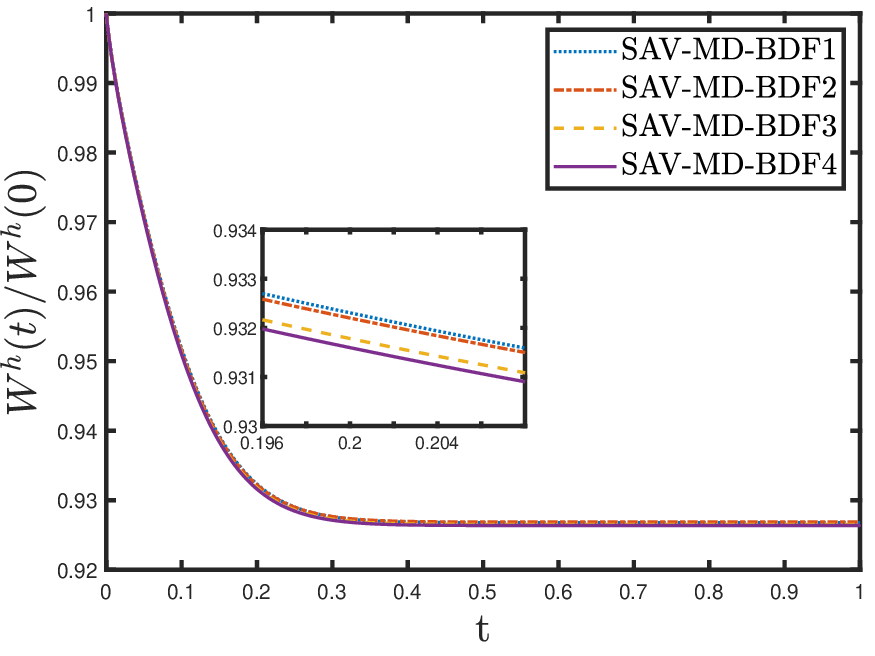}
    \includegraphics[width=0.30\textwidth]{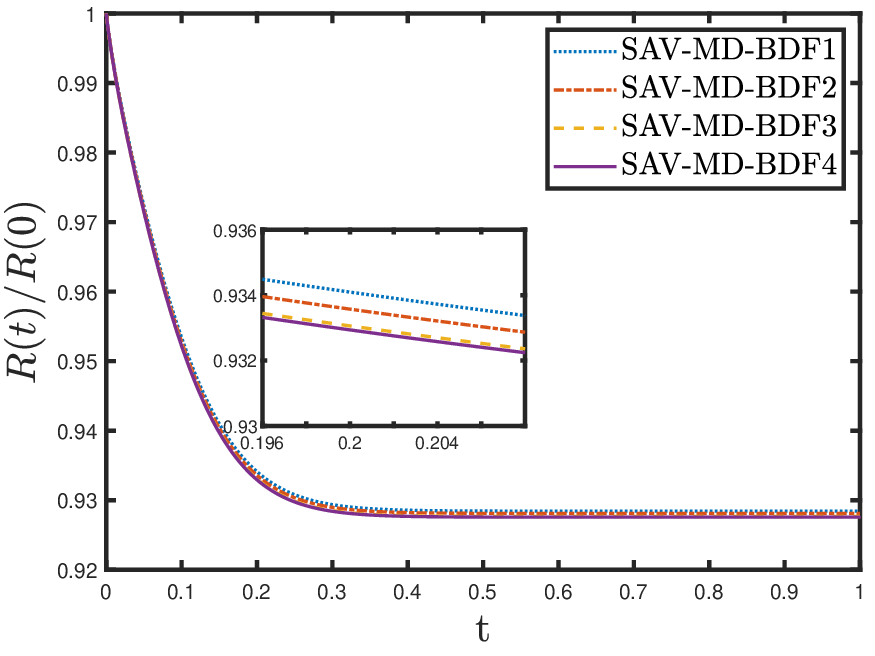}
    \includegraphics[width=0.30\textwidth]{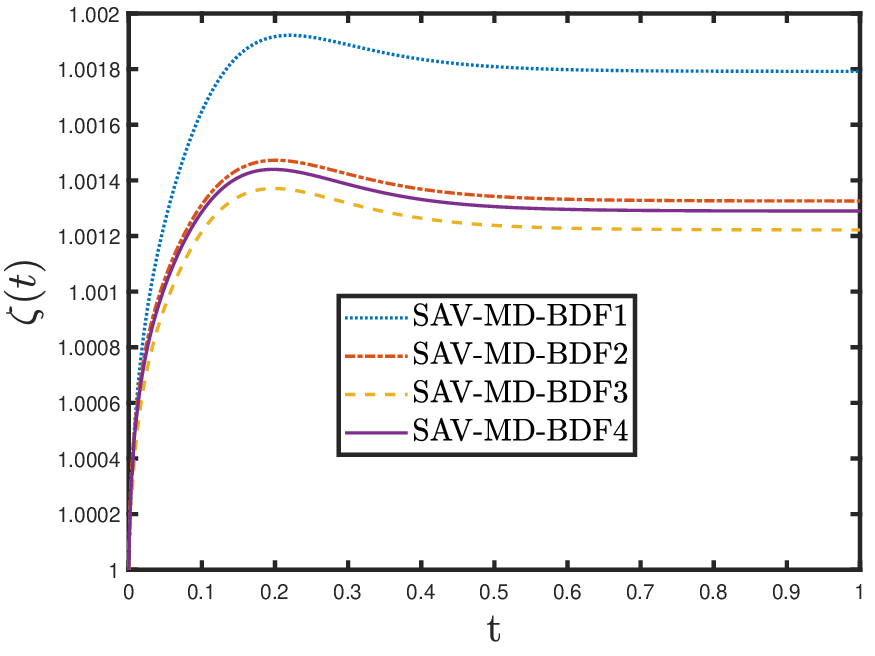}
    \includegraphics[width=0.30\textwidth]{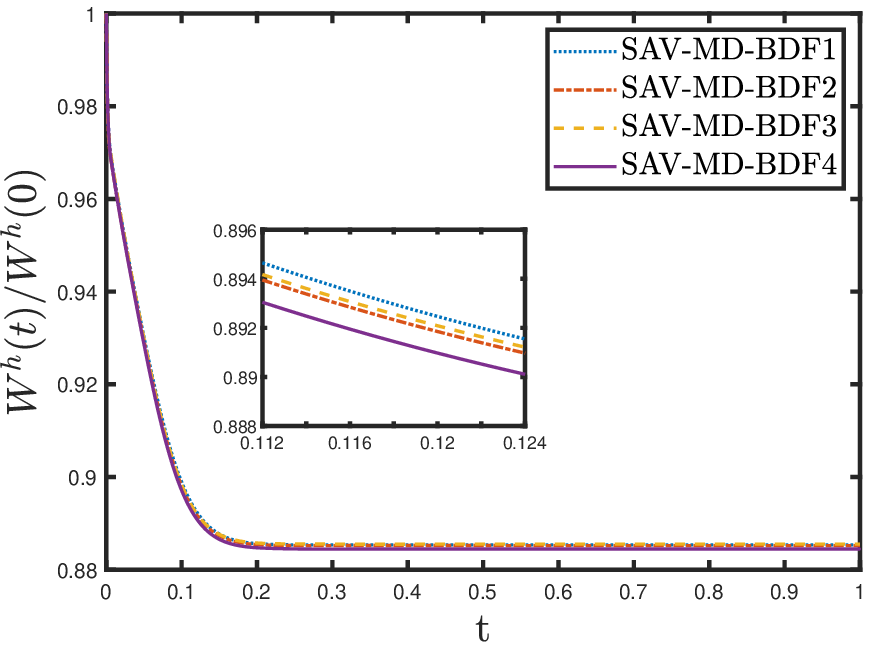}
    \includegraphics[width=0.30\textwidth]{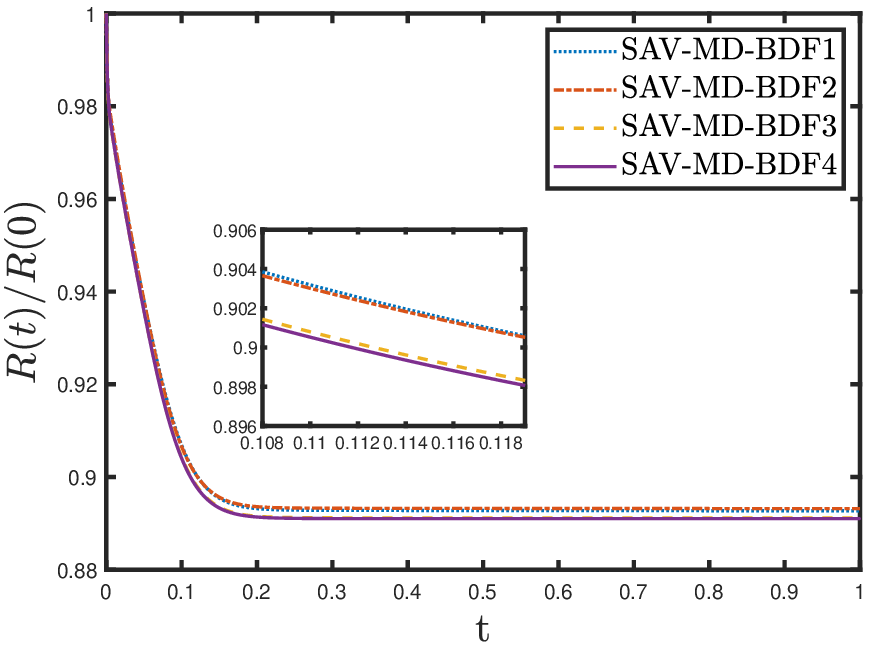}
    \includegraphics[width=0.30\textwidth]{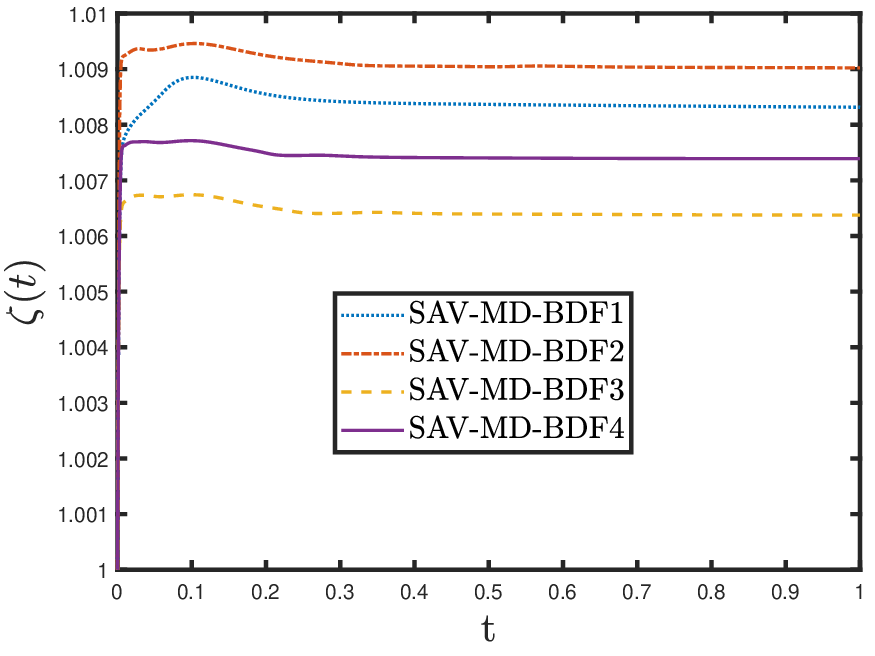}
    \caption{ Energy-related quantities of SAV-MD-BDFk methods with different energy densities: $\gamma(\vec{n})=1+0.05 \left(n_1^4+n_2^4+n_3^4\right)$ (the first row) and $\gamma(\vec{n})=1+0.5 \left(n_1^4+n_2^4+n_3^4\right)$ (the second row). The left colume: Normalized energy $W^h(t)/W^h(0)$; the middle colume: Normalized modified energy $R(t)/R(0)$; the right colume: Auxiliary scalar $\zeta(t)$. Other parameters are chosen as $h = 1.5356\times 10^{-1}$, $\Delta t = 10^{-3}$ and $r = 5$.}
    \label{fig:enerSAVMD_test}
    \end{figure}
    
\begin{figure}[!h]
         \centering
    \includegraphics[width=0.45\textwidth]{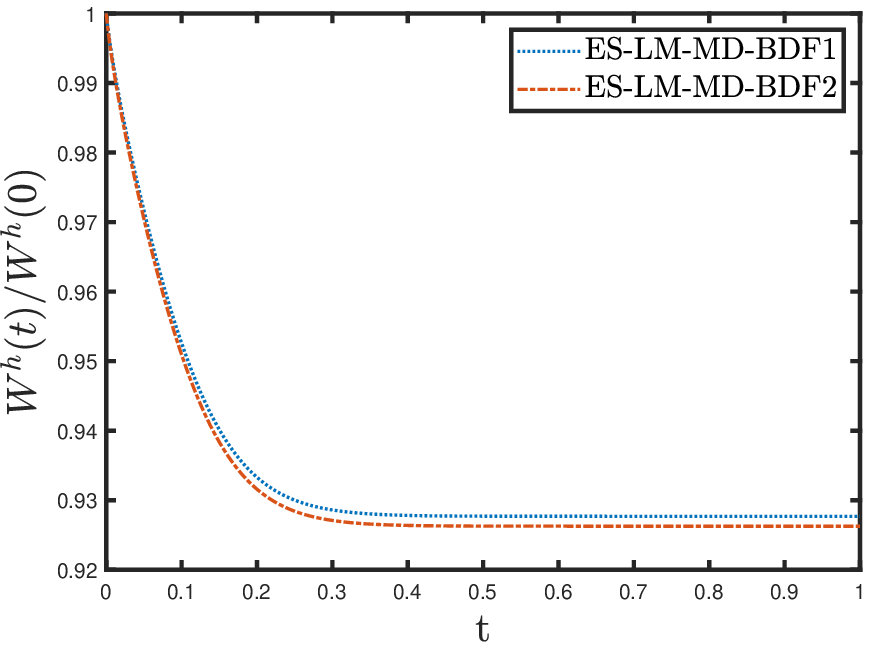}
    \includegraphics[width=0.45\textwidth]{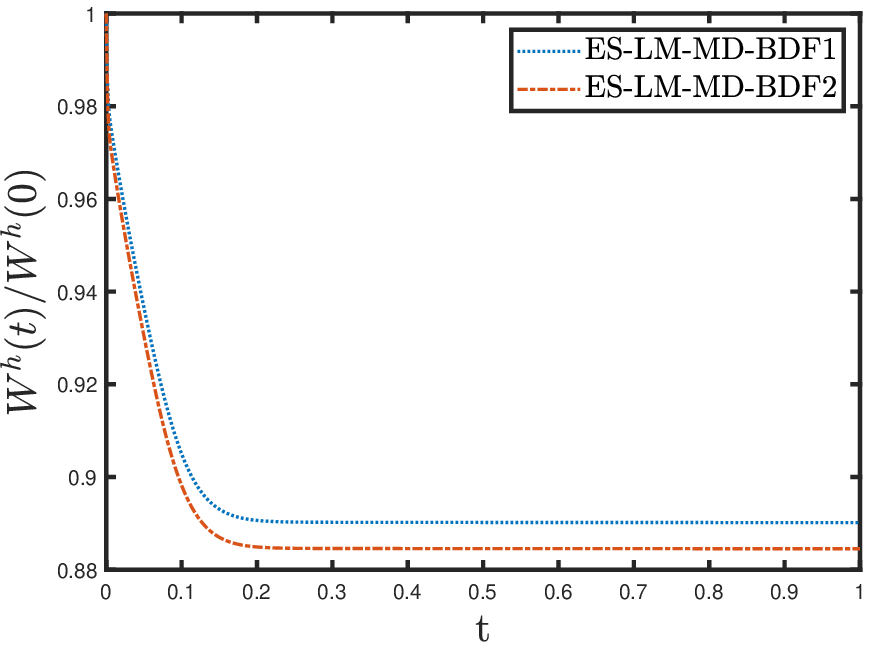}
    \caption{The normalized energy for ES-LM-MD-BDFk $(k = 1,2)$ with different surface energy densities: $\gamma(\vec{n})=1+0.05 \left(n_1^4+n_2^4+n_3^4\right)$ and $\gamma(\vec{n})=1+0.5 \left(n_1^4+n_2^4+n_3^4\right)$. Other parameters are chosen as $h = 1.5356\times 10^{-1}$ and $\Delta t = 10^{-3}$.}
    \label{fig:ESMDener_test}
\end{figure}
    
\begin{figure}[!h]
         \centering
    \includegraphics[width=0.30\textwidth]{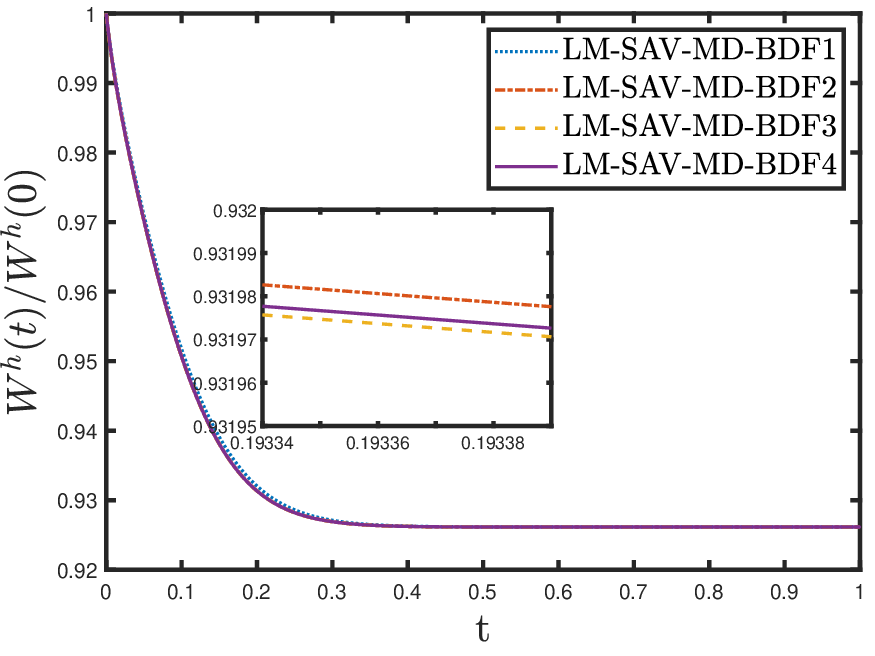}
    \includegraphics[width=0.30\textwidth]{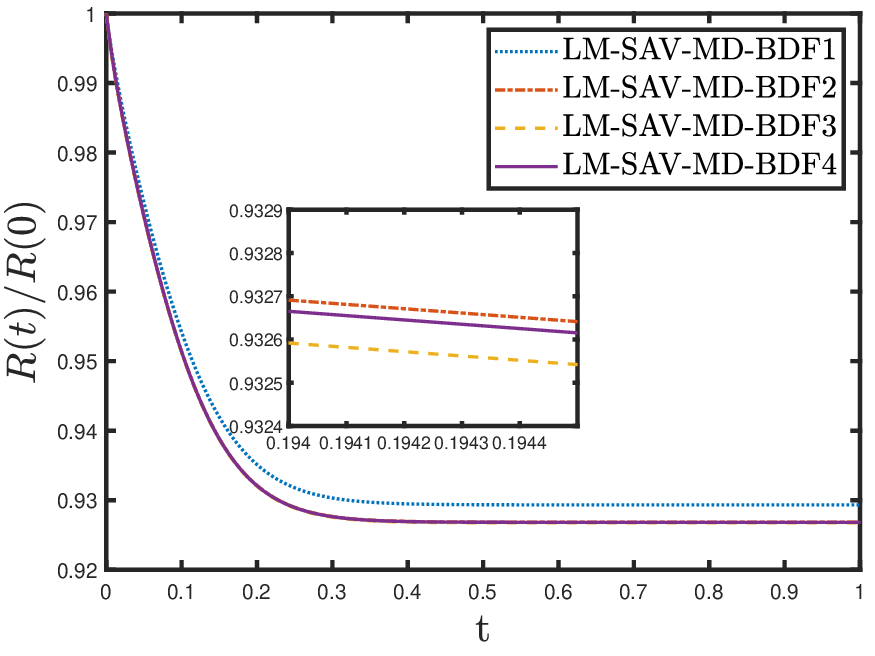}
    \includegraphics[width=0.30\textwidth]{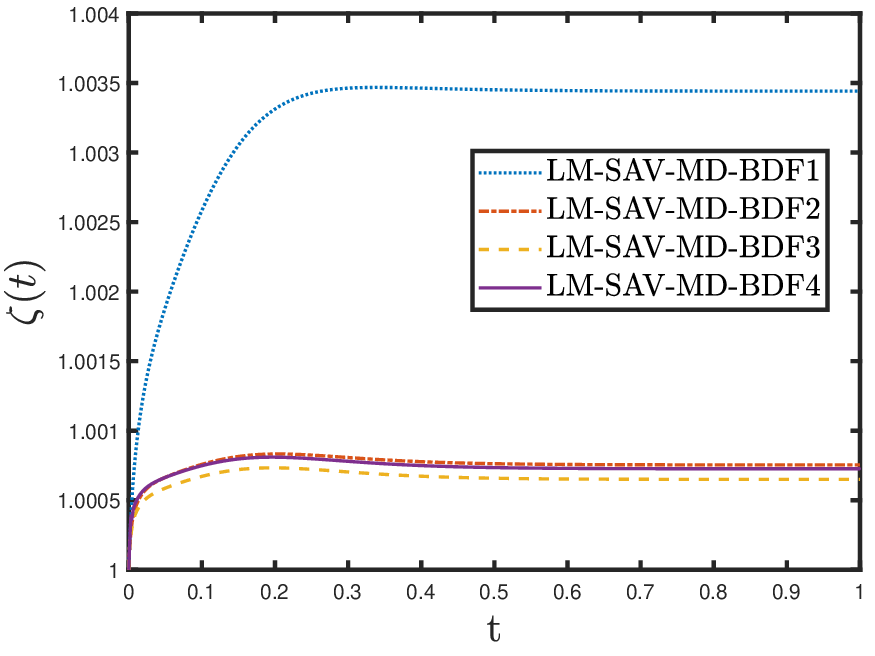}
    \includegraphics[width=0.30\textwidth]{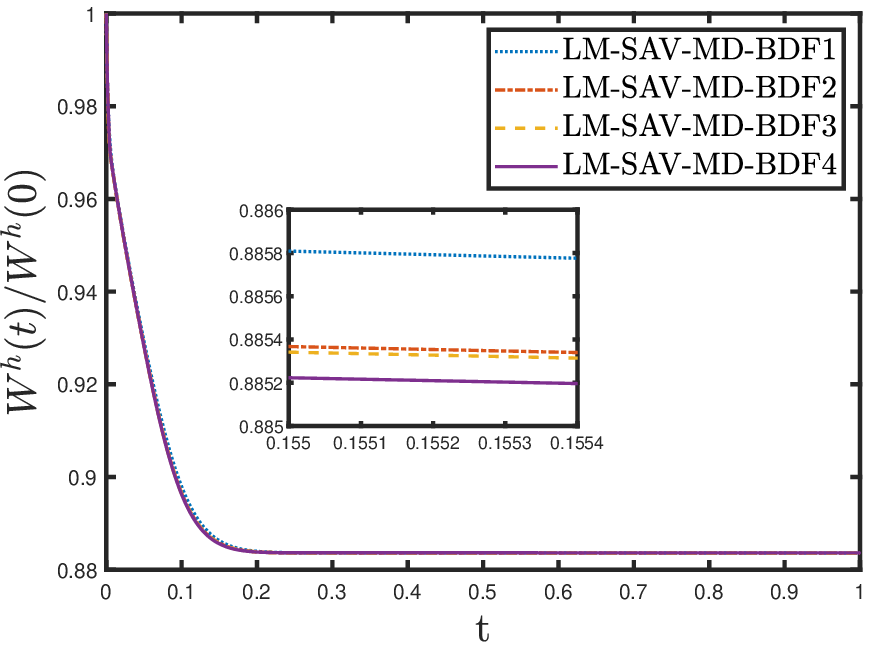}
    \includegraphics[width=0.30\textwidth]{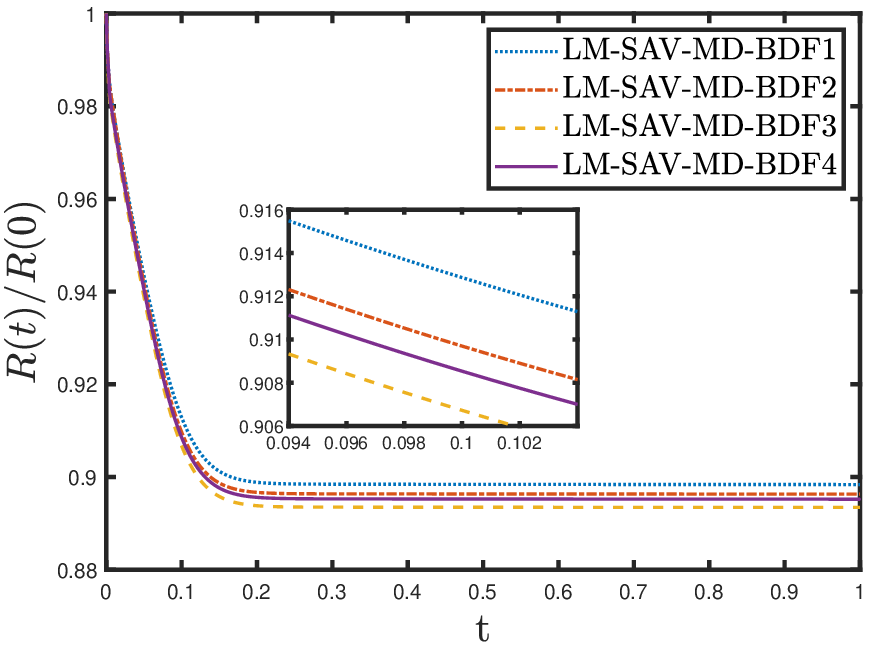}
    \includegraphics[width=0.30\textwidth]{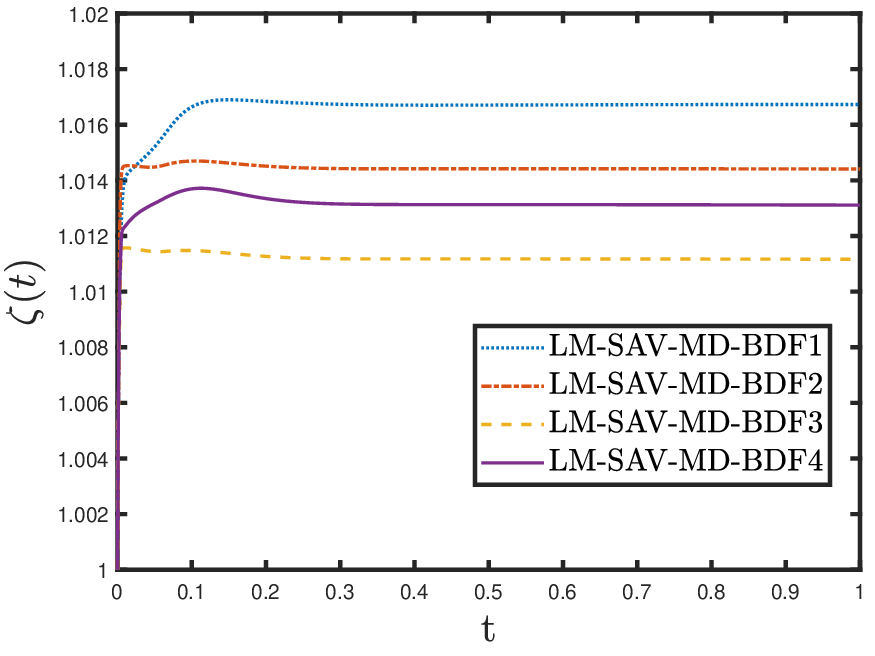}
    \caption{Energy-related quantities of LM-SAV-MD-BDFk methods with different energy densities: $\gamma(\vec{n})=1+0.05 \left(n_1^4+n_2^4+n_3^4\right)$ (the first row) and $\gamma(\vec{n})=1+0.5 \left(n_1^4+n_2^4+n_3^4\right)$ (the second row). The left colume: Normalized energy $W^h(t)/W^h(0)$; the middle colume: Normalized modified energy $R(t)/R(0)$; the right colume: Auxiliary scalar $\zeta(t)$. Other parameters are chosen as $h = 1.5356\times 10^{-1}$, $\Delta t = 10^{-3}$ and $r = 5$.}
    \label{fig:LMSAVMDener_test}
\end{figure}

\textbf{Example 4} (Mesh quality)
In this experiment, we test the mesh quality during the surface evolution of anisotropic SDF under different surface energy densities using the BGN and MD methods, with initial surfaces consisting of a \( 6 \times 1 \times 1 \) cuboid with a mesh size of \( h = 1.4633 \times 10^{-1} \) and a torus with a mesh size of \( h = 2.4263 \times 10^{-1} \).
Figures \ref{fig:meshellipse112_test}-\ref{fig:mesh4foldtorus_test} show that, for both ellipsoidal and 4-fold anisotropic surface energies, the BGN-BDF1 method experiences mesh degeneration during evolution when the time step is very small, a phenomenon that does not occur when the time step is increased.
In contrast, the MD-BDF1 method maintains good mesh quality throughout the entire evolution process.
In this example, we have observed that, when the time step is small, the MD method yields better mesh quality than the BGN method. Therefore, for simplicity, we will not further examine the mesh quality of the other methods.

\begin{figure}[!h]
         \centering
    \includegraphics[width=0.30\textwidth]{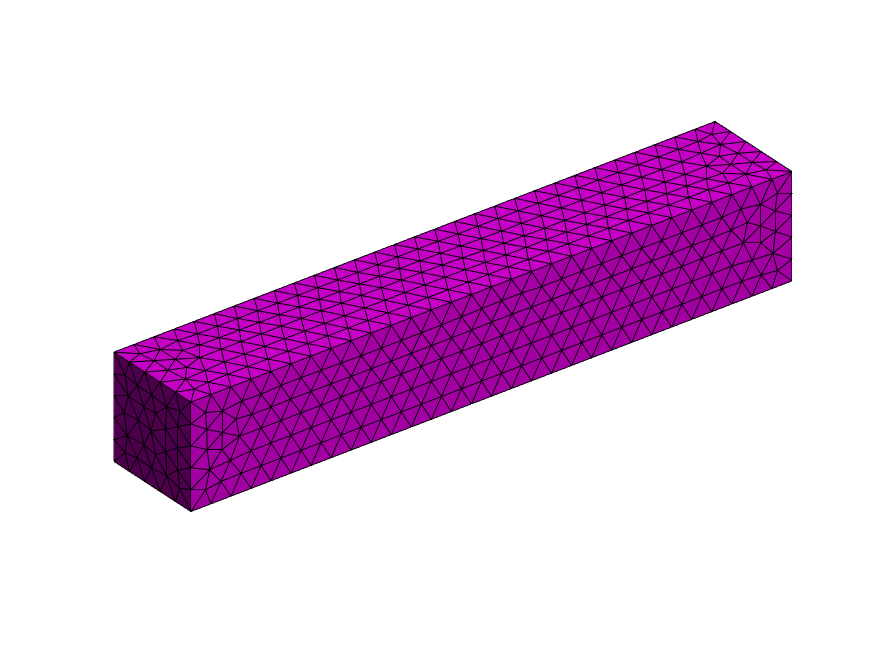}
    \includegraphics[width=0.30\textwidth]{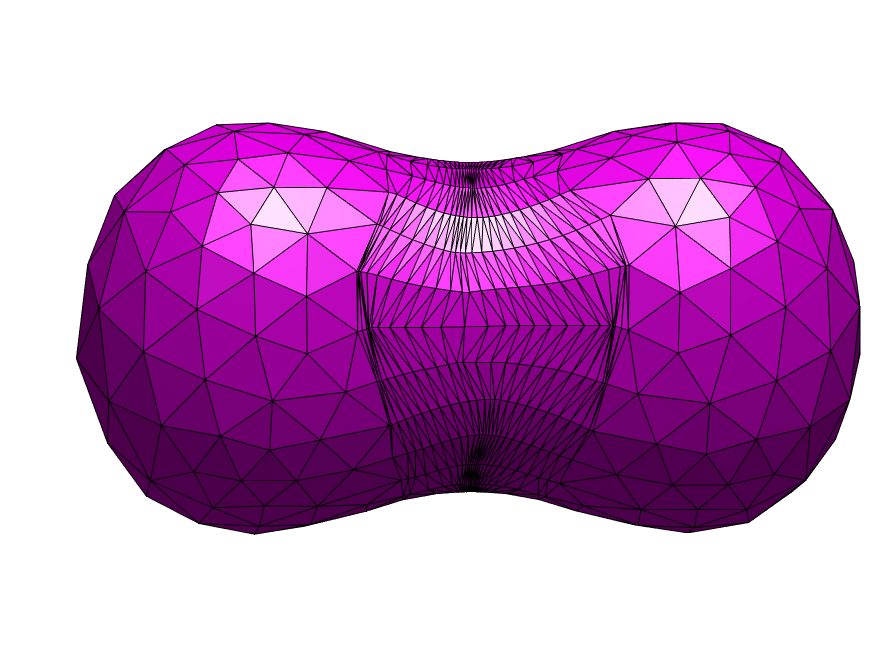}
    \includegraphics[width=0.30\textwidth]{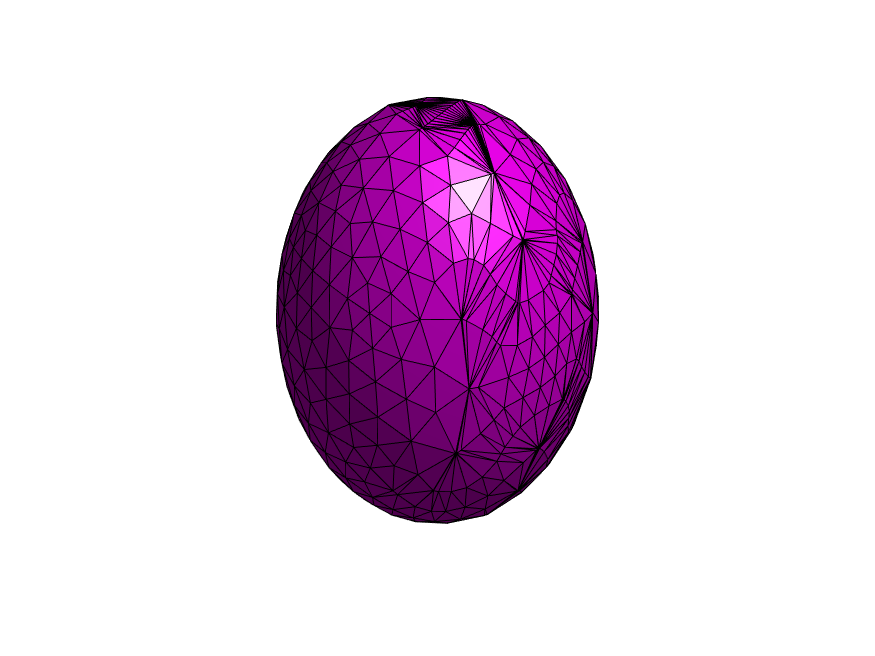}
    \includegraphics[width=0.30\textwidth]{example/mesh/initialshape611cube020.eps}
    \includegraphics[width=0.30\textwidth]{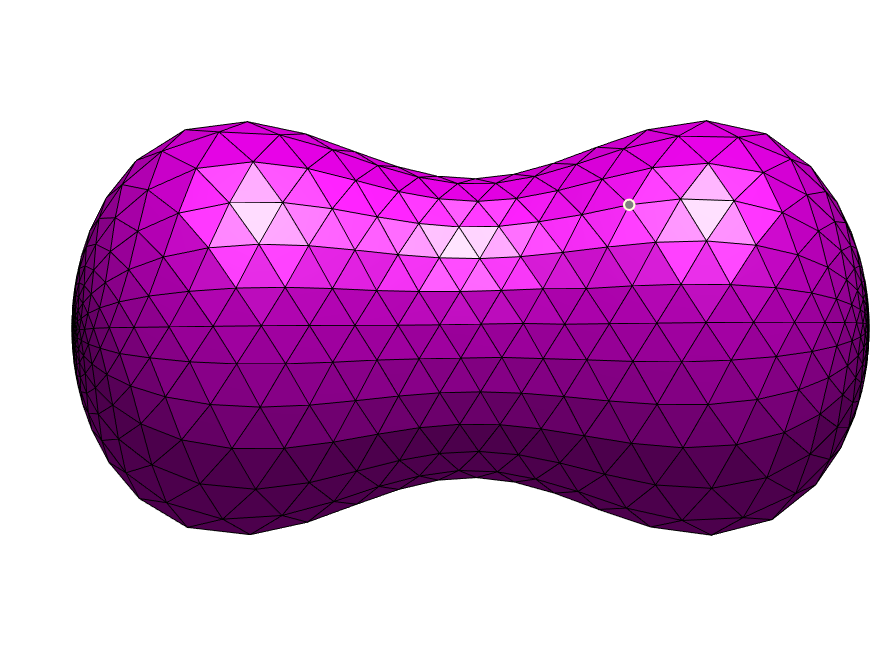}
    \includegraphics[width=0.30\textwidth]{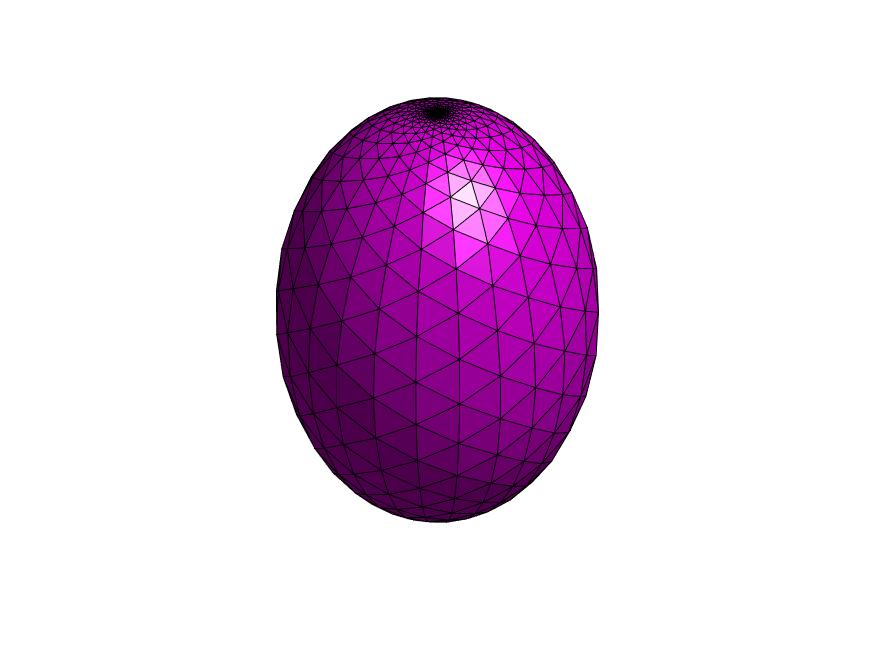}
    
    \caption{Surface evolution of BGN-BDF1 method and MD-BDF1 method under anisotropic surface energy density: $\gamma(\vec{n})=\sqrt{{n_1^2+n_2^2+2n_3^2}}$ at different times: t = 0, 0.3, 1. The first row: BGN-BDF1 method. The second row: MD-BDF1 method. Other parameters are chosen as $h = 1.4633 \times 10^{-1}$ and $\Delta t =10^{-4}$.}
    \label{fig:meshellipse112_test}
\end{figure}
\begin{figure}[!h]
         \centering
    \includegraphics[width=0.30\textwidth]{example/mesh/initialshape611cube020.eps}
    \includegraphics[width=0.30\textwidth]{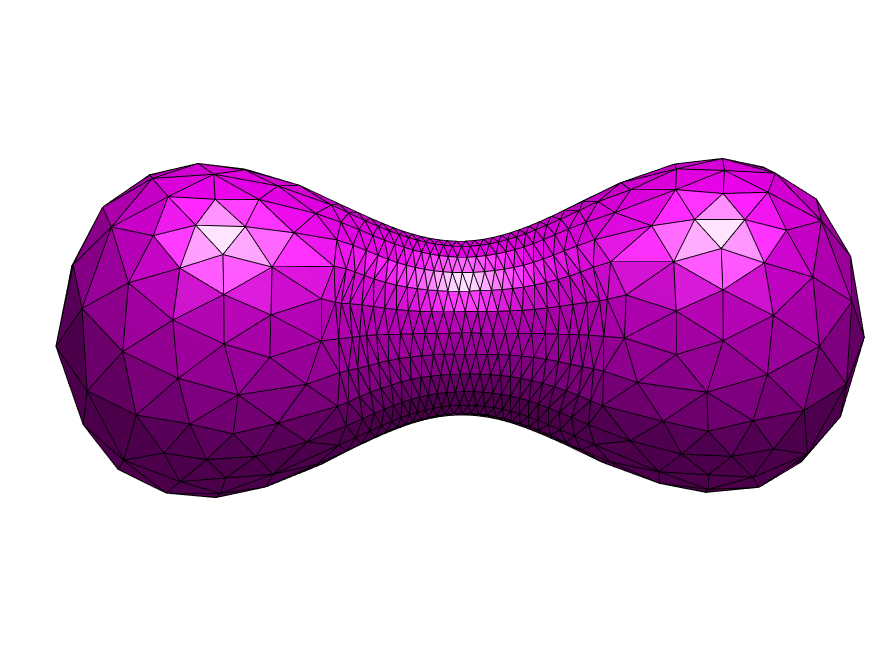}
    \includegraphics[width=0.30\textwidth]{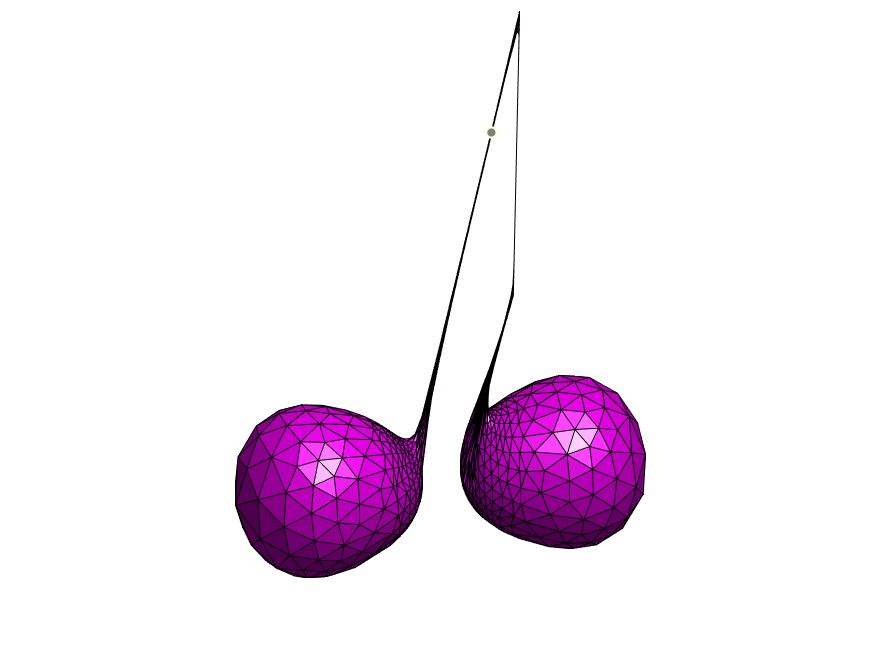}
    \includegraphics[width=0.30\textwidth]{example/mesh/initialshape611cube020.eps}
    \includegraphics[width=0.30\textwidth]{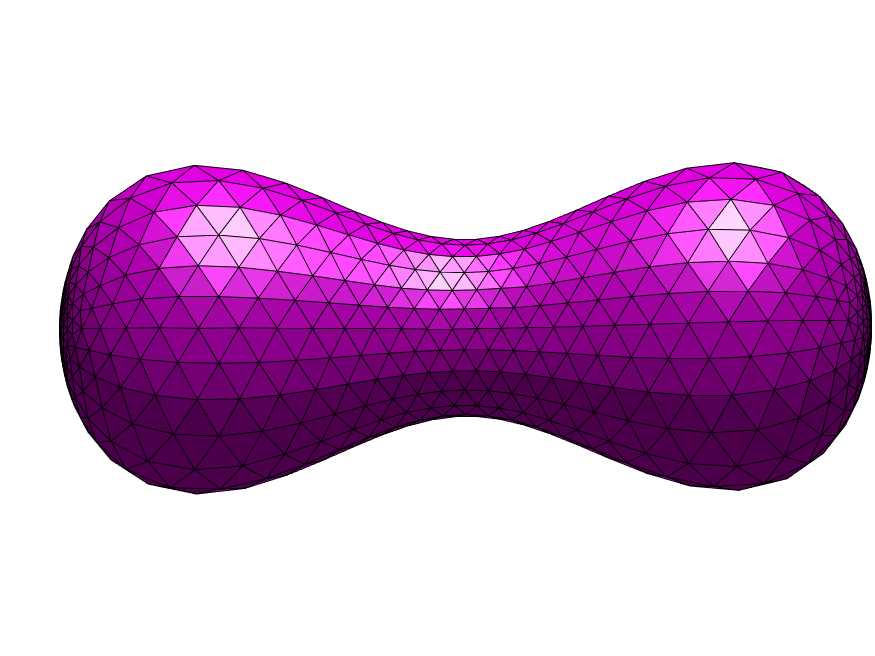}
    \includegraphics[width=0.30\textwidth]{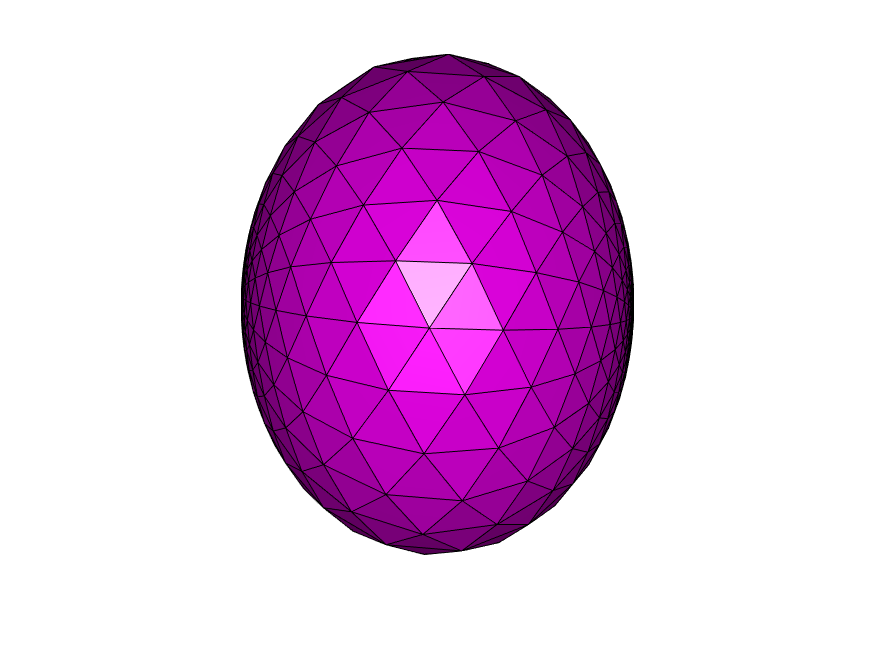}
    
    \caption{Surface evolution of BGN-BDF1 method and MD-BDF1 method under anisotropic surface energy density: $\gamma(\vec{n})=\sqrt{{n_1^2+2n_2^2+2n_3^2}}$ at different times. The first row: BGN-BDF1 method at time: $t = 0,0.132,0.142$. The second row: MD-BDF1 method at time: $t = 0,0.132,1$. Other parameters are chosen as $h = 1.4633 \times 10^{-1}$ and $\Delta t =10^{-4}$.}
    \label{fig:meshellipse122_test}
\end{figure}

\begin{figure}[!h]
         \centering
    \includegraphics[width=0.33\textwidth]{example/mesh/initialshape611cube020.eps}
    \includegraphics[width=0.33\textwidth]{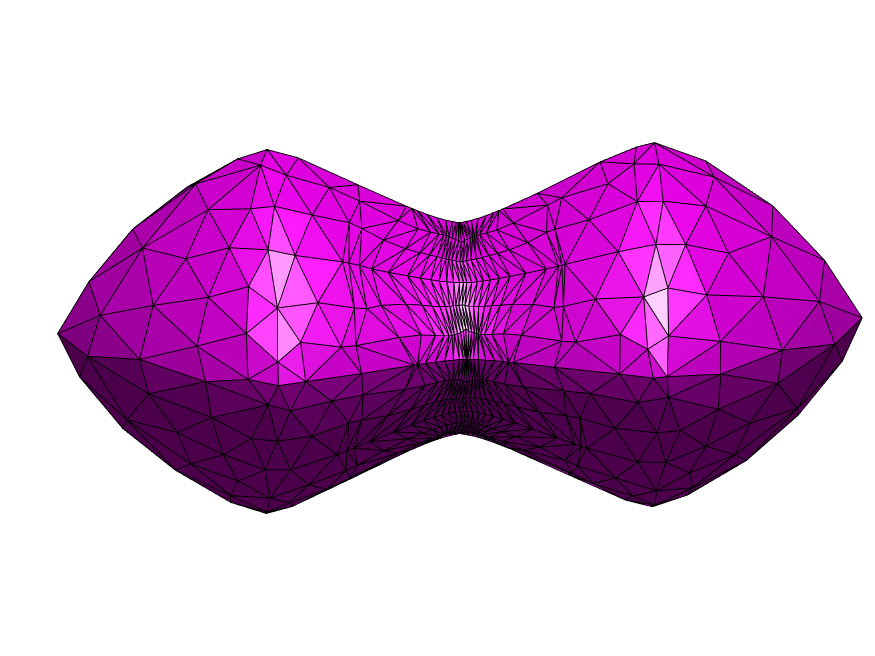}
    \includegraphics[width=0.33\textwidth]{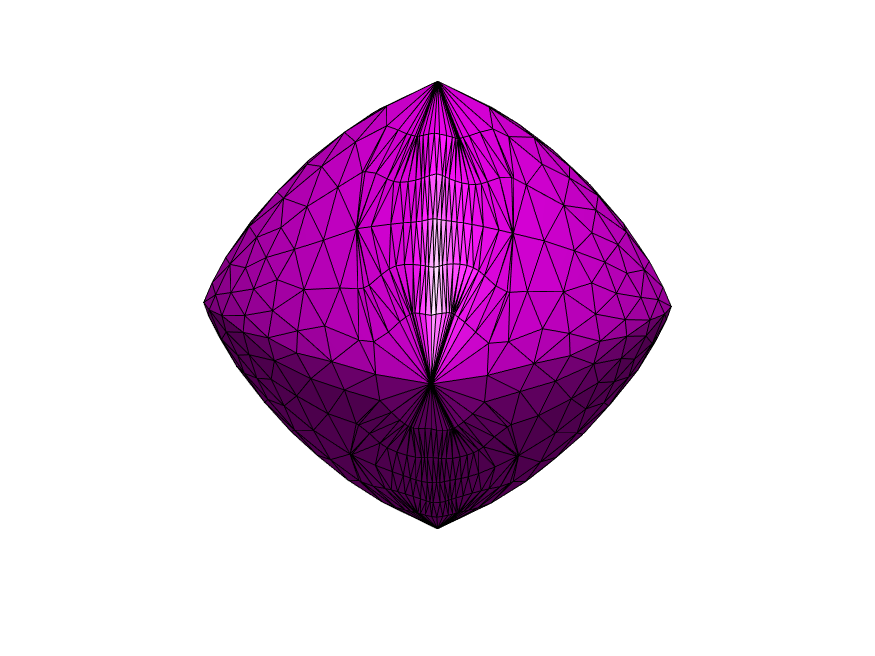}
    \includegraphics[width=0.33\textwidth]{example/mesh/initialshape611cube020.eps}
    \includegraphics[width=0.33\textwidth]{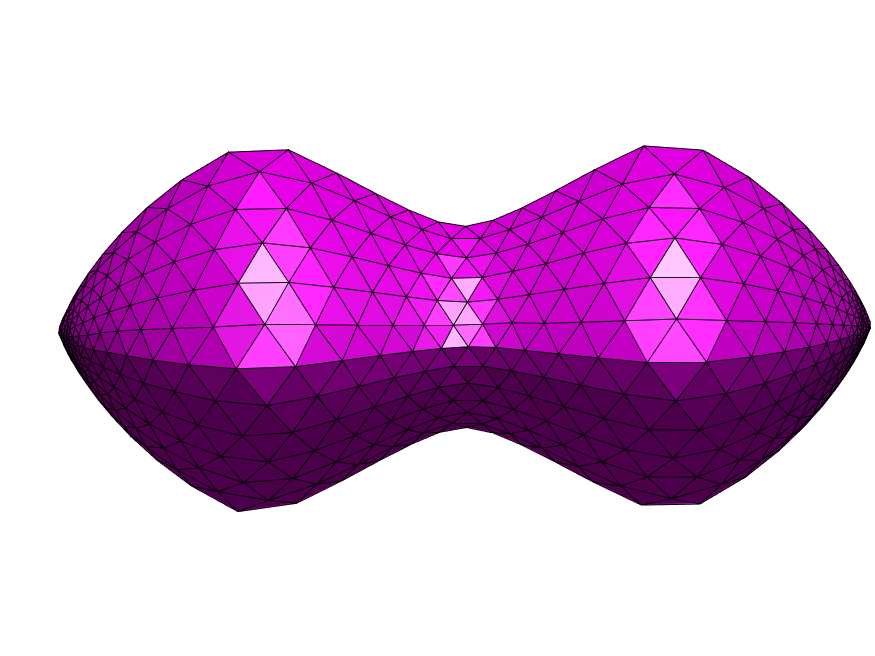}
    \includegraphics[width=0.33\textwidth]{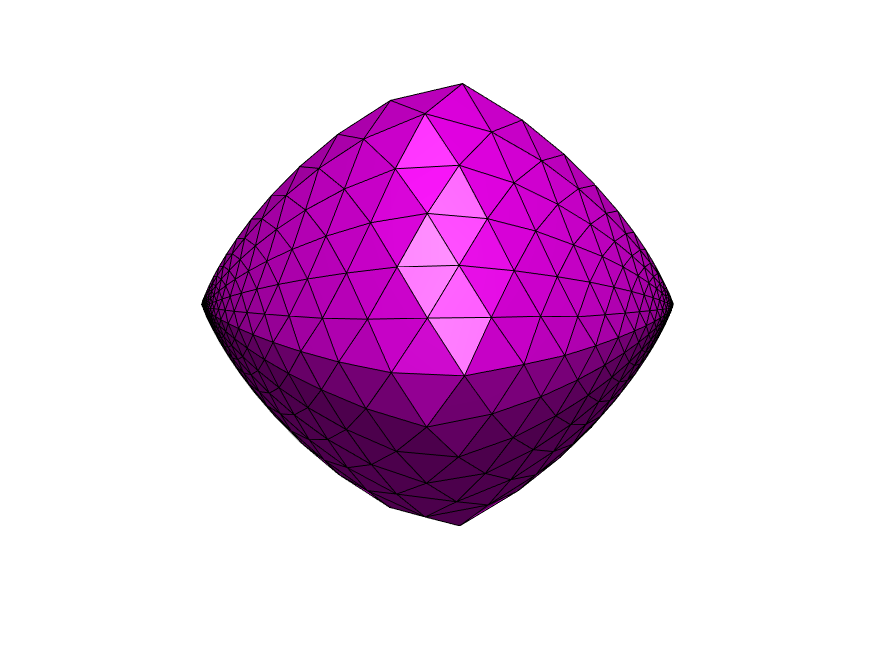}
    
    \caption{Surface evolution of BGN-BDF1 method and MD-BDF1 method under anisotropic surface energy density: $\gamma(\vec{n})=1+0.5 \left(n_1^4+n_2^4+n_3^4\right)$ at different times: t = 0, 0.3, 1. The first row: BGN-BDF1 method. The second row: MD-BDF1 method. Other parameters are chosen as $h = 1.4633 \times 10^{-1}$ and $\Delta t =10^{-4}$.}
    \label{fig:mesh4foldcube_test}
\end{figure}

\begin{figure}[!h]
         \centering
    \includegraphics[width=0.33\textwidth]{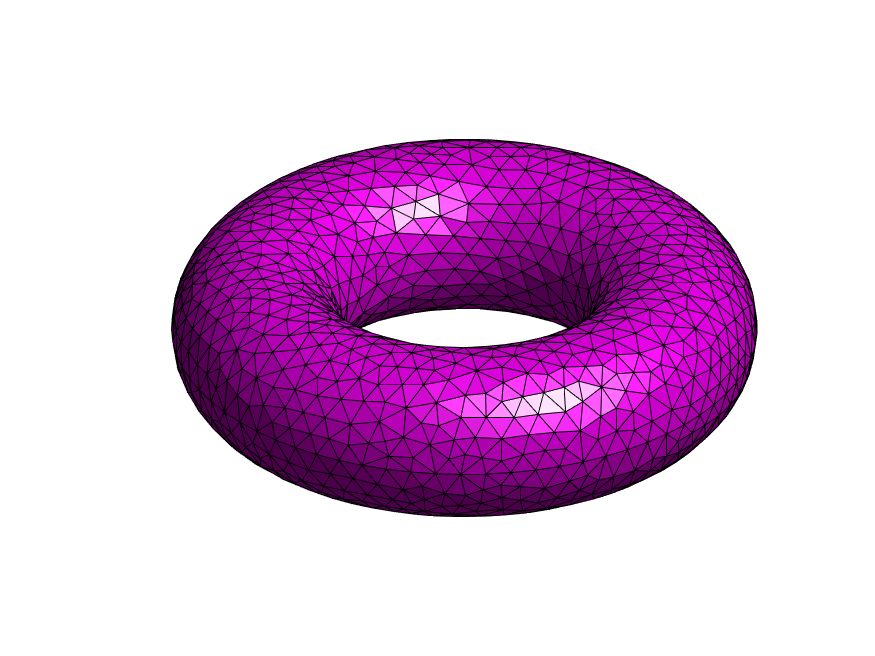}
    \includegraphics[width=0.33\textwidth]{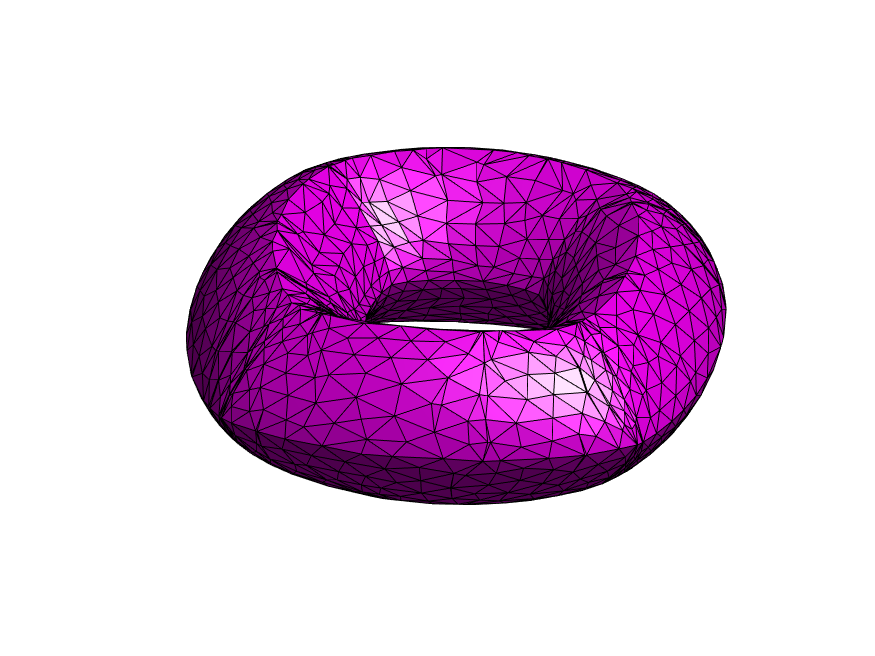}
    \includegraphics[width=0.33\textwidth]{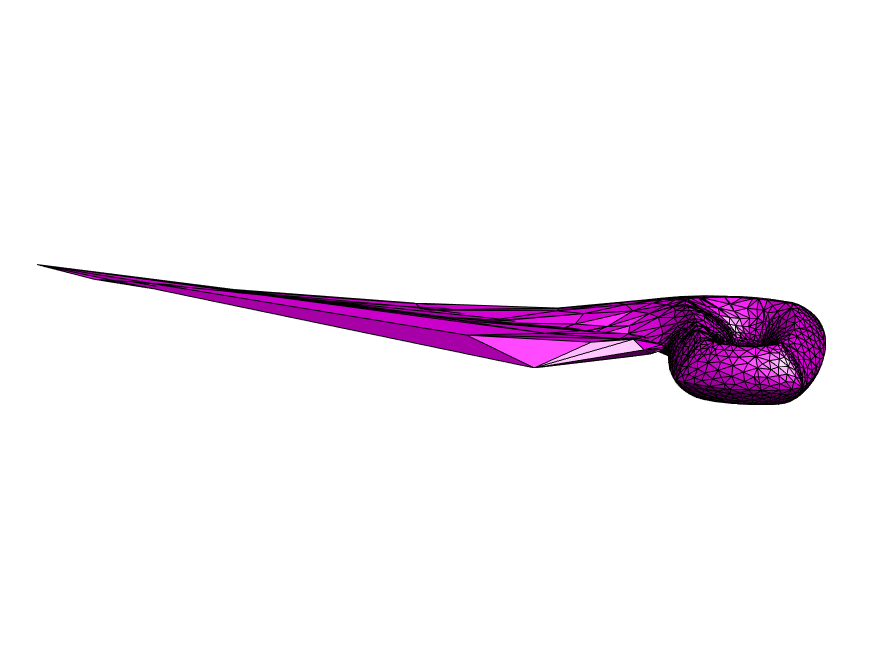}
    \includegraphics[width=0.33\textwidth]{example/mesh/initialshapetorus030.eps}
    \includegraphics[width=0.33\textwidth]{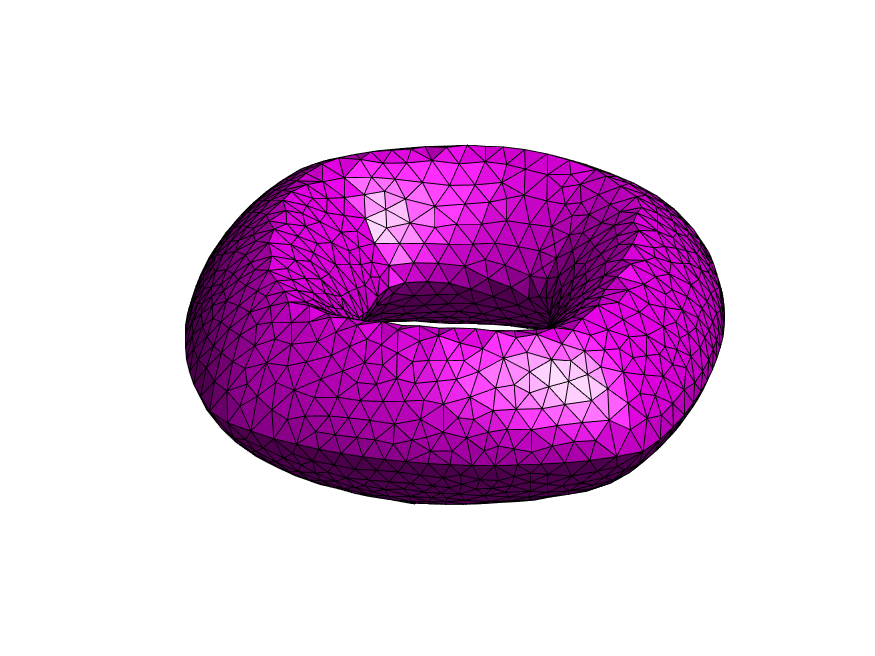}
    \includegraphics[width=0.33\textwidth]{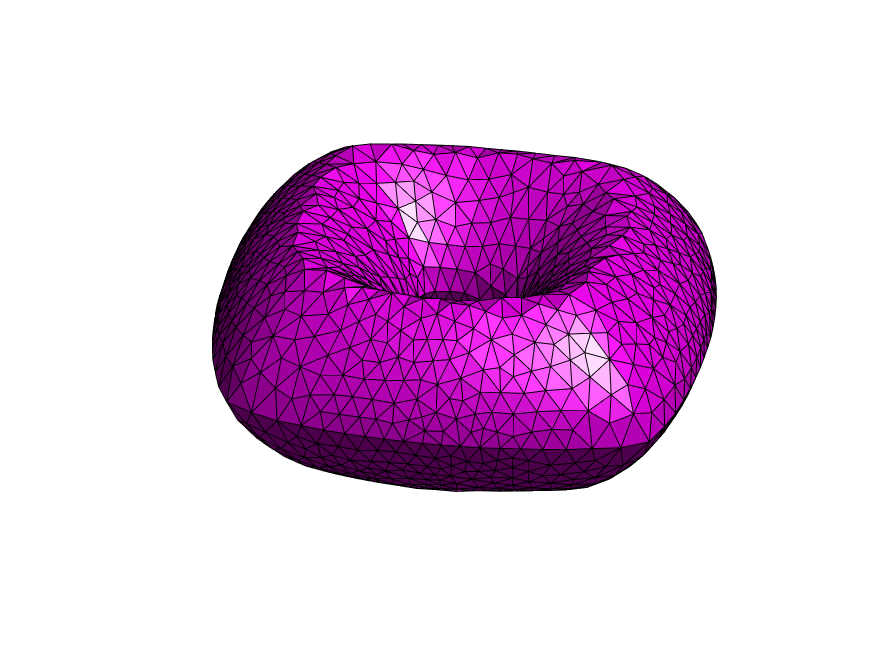}
    
    \caption{Surface evolution of BGN-BDF1 method and MD-BDF1 method under anisotropic surface energy density: $\gamma(\vec{n})=1+0.5 \left(n_1^4+n_2^4+n_3^4\right)$ at different times. The first row: BGN-BDF1 method at time: $t = 0, 0.3, 0.8509$. The second row: MD-BDF1 method at time: $t = 0, 0.3, 1$. Other parameters are chosen as $h = 2.4263 \times 10^{-1}$ and $\Delta t =10^{-4}$.}
    \label{fig:mesh4foldtorus_test}
\end{figure}

\section{Conclusion} \label{sec:conclusion}

In this study, we extend the MD formulation to anisotropic SDF, which, similar to the isotropic case, also maintains excellent mesh quality. Another key innovation of this work is the development of structure-preserving algorithms with high-order temporal accuracy.
Firstly, we develop the MD-BDFk method based on the MD formulation. By replacing the normal vector \( \vec{n}^m \) with \( \vec{n}^{m+\frac{1}{2}} \) in the MD-BDF1 method, we derive the volume-conservative VC-MD-BDF1 method.
Secondly, we design energy-stable SAV-MD-BDFk methods for the SDF by combining the SAV approach with the MD formulation. Similarly, by replacing \( \vec{n}^m \) with \( \vec{n}^{m+\frac{1}{2}} \) in the SAV-MD-BDF1 method, we construct the VC-SAV-MD-BDF1 method, which approximately conserves volume.
To construct high-order volume-conservative and energy-stable methods, we combine the LM approach, MD formulation, and BDFk schemes to develop the LM-MD-BDFk methods. This includes the volume-conservative VC-LM-MD-BDFk methods, the energy-stable ES-LM-MD-BDFk methods (for \(k=1, 2\)), and the structure-preserving SP-LM-MD-BDFk methods.
To address the instability issues in the SP-LM-MD-BDFk methods while maintaining temporal higher-order energy stability (\(k \geq 3\)), we further develop novel LM-SAV-MD-BDFk methods, which exhibit approximate volume conservation and energy stability.
Through extensive numerical experiments, we demonstrate that our methods successfully maintain mesh quality for anisotropic SDFs, achieving high-order temporal accuracy, volume conservation, and energy stability, with different methods exhibiting distinct characteristics.

\bibliographystyle{elsarticle-num}
\bibliography{references}
\end{document}